\numberwithin{equation}{section}
\definecolor{VerdeOlivo}{rgb}{0.3,0.5,0.1}
\definecolor{Magenta}{rgb}{.65,0.15,.2}
\definecolor{Gris}{gray}{0.3}
\newtheorem{Theorem}{Theorem}[section] 
\newtheorem{Definition}[Theorem]{Definition}
\newtheorem{Proposition}[Theorem]{Proposition}  
\newtheorem{Lemma}[Theorem]{Lemma} 
\newtheorem{Corollary}[Theorem]{Corollary}
\newtheorem{Remark}[Theorem]{Remark}
\newtheorem{Example}[Theorem]{Example}
\newtheorem{Claim}[Theorem]{Claim}    
\newtheorem{Conjecture}[Theorem]{Conjecture}
\theoremstyle{definition}
\begin{document} 


\title[On the critical ideals of graphs]{On the critical ideals of graphs}


\author{Hugo Corrales}
\author{Carlos E. Valencia}
\address{
Departamento de Matem\'aticas\\
Centro de Investigaci\'on y de Estudios Avanzados del IPN\\
Apartado Postal 14--740\\
07000 Mexico City, D.F. 
} 
\email[Hugo~Corrales]{hhcorrales@gmail.com}
\email[Carlos E.~Valencia\footnote{Corresponding author}]{cvalencia@math.cinvestav.edu.mx, cvalencia75@gmail.com}
\thanks{Both authors was partially supported by CONACyT grant 166059. 
The first author was partially supported by CONACyT and the second author was partially supported by SNI}

\keywords{Critical ideals, Critical group, Generalized Laplacian matrix, Determinantal ideal, Gr\"obner basis, Stability number, Clique number, Cycle, Complete graph.}  
\subjclass[2010]{Primary 13F20; Secondary 13P10, 05C50, 05E99.}

\begin{abstract} 
We introduce some determinantal ideals of the generalized Laplacian matrix associated to a digraph $G$, that we call critical ideals of $G$.
Critical ideals generalize the critical group and the characteristic polynomials of the adjacency and Laplacian matrices of a digraph.
The main results of this article are the determination of some minimal generator sets 
and the reduced Gr\"obner basis for the critical ideals of the complete graphs, the cycles and the paths.
Also, we establish a bound between the number of trivial critical ideals and the stability and clique numbers of a graph.
\end{abstract}

\maketitle


\section{Introduction} 
Let $G=(V,E)$ be a digraph with $n$ vertices, $m_{(u,v)}$ the number of directed arcs from $u$ to $v$,
and $A(G)$ be the \emph{adjacency matrix} of $G$ given by $A(G)_{u,v}=m_{(u,v)}$.
The \emph{Laplacian matrix} of $G$ is given by 
\[
L(G)=D^+(G)-A(G),
\] 
where $D^+(G)$ is the diagonal matrix with the out-degrees of the vertices of $G$ in the diagonal entries.
Note that the Laplacian matrix is insensitive to loops in the digraph.
If $X_G=\{x_u\, |\, u\in V(G)\}$ is the set of variables indexed by the vertices of the digraph $G$,
then the {\it generalized Laplacian matrix} of $G$, denoted by $L(G,X_G)$, 
is given by
\[
L(G,X_G)_{u,v}=\begin{cases}
x_u& \text{ if } u=v,\\
-m_{(u,v)} & \text{ otherwise}.
\end{cases}
\]

Godsil and Royle in~\cite[13.9]{godsil} give a similar way to define a generalized Laplacian matrix.
Furthermore, if $\mathcal{P}$ is a commutative ring with identity, $\mathcal{P}[X_G]$ is the polynomial ring 
over $\mathcal{P}$ in the variables $X_G$ and $1\leq i\leq n$, then the $i$-{\it critical ideal} of $G$ is the determinantal ideal given by
\[
I_i(G,X_G)=\langle {\rm minors}_i(L(G,X_G))\rangle\subseteq \mathcal{P}[X_G],
\]
where ${\rm minors}_i(L(G,X_G))$ is the set of determinants of all the $i$-square submatrices of $L(G,X_G)$.
Note that we can define (without any  technical issue) the critical ideals of an $n\times n$ matrix $M$ with entries in $\mathcal{P}$
as $I_i(M,X)=\langle {\rm minors}_i(L(M,X))\rangle\subseteq \mathcal{P}[X]$ for all $i=1,\ldots,n$, where 
\[
L(M,X)_{u,v}=\begin{cases}
x_u& \text{ if } u=v,\\
-M_{u,v} & \text{ otherwise}.
\end{cases}
\]

In \cite{ideal} was defined a binomial ideal, called the {\it toppling ideal}, associated to the Laplacian matrix of a graph, 
this ideal encodes the topplings of the abelian sandpile model.

When $G$ is a simple connected graph, its {\it critical group}, denoted by $K(G)$, 
is defined as the torsion subgroup of the cokernel of $L(G)$; for instance see~\cite{lorenzini91}.
That is, 
\[
K(G)\oplus\mathbb{Z}=\mathbb{Z}^{V}/{\rm Im}\, L(G)^t.
\]
Wagner in~\cite{directed} defined the critical group for a digraph.
Also, in~\cite{simplicial} it was defined the critical group for simplicial complexes.

As the reader can imagine, the critical ideals and the critical group of a graph are closely related, 
for instance the reader can see Propositions~\ref{correspondence0} and~\ref{correspondence}. 
Moreover, critical ideals of a graph are very useful to get a better understanding of its critical group.
In Section~\ref{critical}, we will show that the critical ideals are better behaved than the critical group.
For instance, if $\gamma_{\mathcal{P}}(G)$ is the number of critical ideals over the base ring $\mathcal{P}$ that are trivial, then Theorem~\ref{bounds} asserts that 
\[
\gamma_{\mathcal{P}}(G)\leq 2(|V(G)|-\alpha(G))\text{ and }\gamma_{\mathcal{P}}(G)\leq 2(|V(G)|-\omega(G))+1,
\] 
where $\alpha(G)$ is the stability number and $\omega(G)$ is the clique number of the graph.
That is; the invariant $\gamma_{\mathcal{P}}$ is closely related to the combinatorics of the graph.
Also, if $H$ is an induced subdigraph of $G$, then $\gamma_\mathcal{P}(H)\leq \gamma_\mathcal{P}(G)$
in contrast with the behavior of the number of invariant factors equal to one of the critical group of induced subdigraphs, see Section~\ref{gamma}.  

\medskip

The main goal of this article is the study of the critical ideals of graphs.
The main results of this article are contained in Sections~\ref{critical} and~\ref{combinatorial}.
Section~\ref{critical} contains the basic properties of the critical ideals and
we define the invariant $\gamma_{\mathcal{P}}$ as the number of the critical ideals that are trivial. 
Also, in this section we get the reduced Gr\"obner basis of the complete graph.
Finally, we explore the relationship of the critical ideals of a graph with the characteristic polynomial of its adjacency and Laplacian matrices.
The reduced Gr\"obner basis of the critical ideals  of the cycles and
some combinatorial expression for the minors of the generalized Laplacian matrix of a digraph
are presented in Section~\ref{combinatorial}. 


\section{Preliminaries} 
A {\it graph} $G$ is a pair $(V, E)$, where $V$ is a finite set and $E$ is a collection of unordered pairs of elements of $V$.
The elements of $V$ are called {\it vertices} and the elements of $E$ are called {\it edges}.
For simplicity, sometimes an edge $e=\{x,y\}$ will be denoted by $xy$. 
The sets of two or more edges with the same ends are called \textit{multiple edges}.
A \textit{loop} is an edge incident to a unique vertex.
A \textit{multigraph} is a graph with multiple edges and no loops. 

A \textit{digraph} $D$ is a pair $(V,E)$, where $V$ is a finite set and $E$ is a set of ordered pairs of elements of $V$.
The elements of $V$ and $E$ are called \textit{vertices} and \textit{arcs}, respectively. 
Given an arc $e=(x,y)$, we say that $x$ is the initial vertex of $e$ and $y$ is the terminal vertex of $e$.
Sometimes for simplicity an arc $(u,v)$ will be denoted by $\overset{\longrightarrow}{uv}$.
In this article, any digraph may contain multiple arcs and loops, unless otherwise is specified.

The number of arcs with initial vertex $x$ and terminal vertex $y$ will be denoted by $m_{(x,y)}$.
The \textit{out-degree} of a vertex $x$ of a digraph $D$, denoted by $d_{D}^+(x)$, is the number of arcs in $D$ with initial vertex $x$.
Given a subset $U$ of the vertices of a digraph $G$, the induced subdigraph by $U$, denoted by $G[U]$, is the subdigraph of 
$G$ that has $U$ as vertex set and $E=\{(u,v)\, | \, u,v\in U \text{ and }(u,v)\in E(G) \}$ as arc set.
We say that a subdigraph $H$ of $G$ is induced if $H=G[U]$ for some $U\subset V(G)$.
In a similar way, we can define an induced subgraph.

Note that there exists a natural inclusion of the graphs into the digraphs. 
Actually, any graph $G$ can be identified with a digraph if we consider each edge $\{u,v\}$ of $G$
as the pair of arcs $(u,v)$ and $(v,u)$.
The reader can consult~\cite{diestel} and~\cite{digraphs} for any unexplained concept of graph and digraph theory, respectively.

\medskip

Now we will introduce some notation for the minors of a matrix.
Let $M\in M_{n}(\mathcal{P})$ be an $n\times n$ matrix with entries on $\mathcal{P}$, $I=\{i_1,\ldots,i_r\}\subseteq [n]$, and $J=\{j_1,\ldots,j_s\}\subseteq [n]$.
The submatrix of $M$ formed by rows $i_1,\ldots,i_r$ and columns $j_1,\ldots,j_s$ is denoted by $M[I;J]$.
On the other hand, the submatrix obtained from $M$ by deleting rows $i_1,\ldots,i_r$ 
and columns $j_1,\ldots,j_s$ will be denoted by $M(I;J)$. 
That is, $M(I;J)=M[I^c;J^c]$.
If $|I|=|J|=r$, then $M[I;J]$ is called an {\it $r$-square submatrix} or a {\it square submatrix} of size $r$ of $M$.
An {\it $r$-minor} is the determinant of an $r$-square submatrix.
The set of $i$-minors of a matrix $M$ will be denoted by ${\rm minors}_i(M)$.
We say that $M,N \in M_{n}(\mathcal{P})$ 
are {\it equivalent}, denoted by $N\sim M$, if there exist invertible matrices $P,Q\in GL_n(\mathcal{P})$ such that $N=PMQ$.
It is not difficult to see that if $N\sim M$, then $K(M)=\mathcal{P}^n/M^t\mathcal{P}^n\cong\mathcal{P}^n/N^t\mathcal{P}^n=K(N)$.

To finish this section, we will recall some useful results on Gr\"obner basis.

\subsection{Gr\"obner Basis}
Usually the theory of Gr\"obner basis deals with ideals in a polynomial ring over a field.
However, in this paper we deal with ideals in a polynomial ring over the integers.
There exists a theory of Gr\"obner basis over almost any kind of rings.
 
We recall some basic concepts on Gr\"obner basis, for more details see~\cite{Lou}. 
First, an {\it order term} in the polynomial ring $R=\mathcal{P}[x_1,\ldots,x_n]$ is a total order $\prec$ in the set of monomials of $R$ such that
\begin{description}
\item[(i)] $1\prec x^{\alpha}$  for all ${\bf 0} \neq {\bf \alpha} \in\mathbb{N}^n$, and  
\item[(ii)] if $x^{\alpha}\prec x^{\beta}$, then 
$x^{\alpha+\gamma}\prec x^{\beta+\gamma}$
for all $\gamma \in\mathbb{N}^n$,
\end{description}
where ${\bf x}^{\alpha}=x_1^{\alpha_1} \cdots x_n^{\alpha_n}$.

Now, given an order term $\prec$ and $p\in \mathcal{P}[X]$, let ${\rm lt}(p)$, ${\rm lp}(p)$, and ${\rm lc}(p)$ 
be the {\it leading term}, the {\it leading power}, and the {\it leading coefficient} of $p$, respectively.
Given a subset $S$ of $\mathcal{P}[X]$ its leading term ideal of $S$ is the ideal
\[
{\rm Lt}(S)=\langle {\rm lt}(s) \,|\, s\in S \rangle.
\]

A finite set of nonzero polynomials $B=\{b_1,\ldots, b_s\}$ of an ideal $I$ is called a {\it Gr\"obner basis} of $I$ 
with respect to an order term $\prec$ if ${\rm Lt}(B)={\rm Lt}(I)$.
Moreover, it is called {\it reduced} if ${\rm lc}(b_i)=1$ for all $1\leq i\leq s$ and no nonzero term in $b_i$
is divisible by any ${\rm lp}(b_j)$ for all $1\leq i\neq j\leq s$.

A good characterization of Gr\"obner basis is given in terms of $S$-polynomials.
\begin{Definition}
Let $f$, $f'$ be polynomials in $\mathcal{P}[X]$ and $B$ be a set of polynomials in $\mathcal{P}[X]$. 
We say that $f$ {\it reduces strongly} to $f'$ modulo $B$ if
\begin{itemize}
\item ${\rm lt}(f')\prec {\rm lt}(f)$, and 
\item there exist $b\in B$ and $h\in \mathcal{P}[X]$ such that $f'=f-hb$. 
\end{itemize}
Moreover, if $f^*\in \mathcal{P}[X]$ can be obtained from $f$ in a finite number of reductions, we write $f\rightarrow_B f^*$.
\end{Definition}

That is, if $f=\sum_{j=1}^t p_{i_j}b_{i_j}+f^*$ with $p_{i_j} \in \mathcal{P}[X]$ and 
${\rm lt}(p_{i_j}b_{i_j})\neq {\rm lt}(p_{i_k}b_{i_k})$ for all $j\neq k$, then $f\rightarrow_B f^*$.

Now, given $f$ and $g$ polynomials in $\mathcal{P}[X]$, their {\it $S$-polynomial}, denoted by $S(f,g)$, is given by
\[
S(f,g)=\frac{c}{c_f}\frac{X}{X_f}\,f-\frac{c}{c_g}\frac{X}{X_g}\,g,
\]
where $X_f=lt(f)$, $c_f=lc(f)$, $X_g=lt(g)$, $c_g=lc(g)$, $X=\textrm{lcm}(X_f,X_g)$, and $c=\textrm{lcm}(c_f,c_g)$.

\medskip

The next Lemma gives us a useful criterion for checking whether a set of generators of an ideal is a  Gr\"obner basis.
\begin{Lemma}\label{GBT}
Let $I$ be an ideal of polynomials over a PID and $B$ be a generating set of $I$. 
Then $B$ is a Gr\"obner basis for $I$ if and only if $S(f,g)\rightarrow_B 0$  for all $f\neq g\in B$.
\end{Lemma}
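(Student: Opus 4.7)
The plan is to prove Buchberger's $S$-polynomial criterion, adapted from the field case to the PID setting. The essential difference is that the $S$-polynomial here encodes both the $\text{lcm}$ of the leading power products and the $\text{lcm}$ of the leading coefficients, both of which exist because $\mathcal{P}$ is a PID. The forward direction ($\Rightarrow$) is a routine reduction argument: since $S(f,g)\in I$ and $B$ is a Gr\"obner basis, $\text{lt}(S(f,g))\in\text{Lt}(I)=\text{Lt}(B)$, so $S(f,g)$ admits a strong reduction modulo $B$ that lowers its leading power product, and the well-ordering of $\prec$ forces the iteration to terminate at $0$.

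For the converse ($\Leftarrow$), I would take $f\in I$, write $f=\sum_{i=1}^{s}h_i b_i$, and among all such representations choose one minimizing $M=\max_i\text{lp}(h_i b_i)$ under $\prec$ (possible by well-ordering). If $M=\text{lp}(f)$, the summands attaining $M$ force $\text{lt}(f)\in\text{Lt}(B)$. Otherwise $M\succ\text{lp}(f)$, and cancellation at the top produces a relation $\sum_{i\in J}\text{lc}(h_i)\text{lc}(b_i)=0$ in $\mathcal{P}$ on the set $J=\{i:\text{lp}(h_i b_i)=M\}$. The task is to reinterpret this cancelling contribution as a $\mathcal{P}[X]$-combination of $S$-polynomials $S(b_i,b_j)$, modulo a remainder of leading power product $\prec M$.

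The main obstacle, and the unique point where the PID hypothesis is essential, is precisely this reinterpretation. Over a field, any cancellation among nonzero scalars decomposes into pairwise relations; the PID analogue is that the syzygy module of $(\text{lc}(b_i))_{i\in J}\subset\mathcal{P}$ is generated by the elementary $S$-syzygies $\frac{c_{ij}}{c_i}e_i-\frac{c_{ij}}{c_j}e_j$ with $c_{ij}=\text{lcm}(c_i,c_j)$. Translating these back to $\mathcal{P}[X]$ expresses the cancelling top part of $\sum h_i b_i$ as $\sum p_{ij}S(b_i,b_j)$ plus a remainder of leading power product $\prec M$, where each $p_{ij}$ is chosen with $\text{lp}(p_{ij})=M/\text{lcm}(X_i,X_j)$ (the divisibility holds because $X_i, X_j$ both divide $M$). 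By hypothesis each $S(b_i,b_j)\rightarrow_B 0$, yielding an expression $S(b_i,b_j)=\sum_k q^{ij}_k b_k$ with $\text{lp}(q^{ij}_k b_k)\preceq\text{lp}(S(b_i,b_j))\prec\text{lcm}(X_i,X_j)$, so $\text{lp}(p_{ij}q^{ij}_k b_k)\prec M$. Substituting into the original representation of $f$ produces a new representation whose $M$ is strictly smaller under $\prec$, contradicting minimality. Hence $M=\text{lp}(f)$, so $\text{lt}(f)\in\text{Lt}(B)$, and $B$ is a Gr\"obner basis.
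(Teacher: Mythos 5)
The paper does not prove this lemma at all: it is stated as a known criterion imported from the theory of Gr\"obner bases over rings and is implicitly attributed to~\cite{Lou}, so there is no in-paper argument to compare against. Your outline reproduces, correctly, the standard proof of Buchberger's criterion in the PID setting: the forward direction is routine, and in the converse you rightly locate the only place where the PID hypothesis enters, namely that the syzygy module of the leading coefficients $(\mathrm{lc}(b_i))_{i\in J}$ is generated by the pairwise syzygies $\frac{c_{ij}}{c_i}e_i-\frac{c_{ij}}{c_j}e_j$ with $c_{ij}=\mathrm{lcm}(c_i,c_j)$; this is exactly why pairwise $S$-polynomials suffice over a PID, whereas over a general Noetherian ring one must also control coefficient cancellations among larger subsets. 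The one assertion you leave unproved is that syzygy-generation fact itself; it is true and follows by induction on $|J|$, using that for a prime $p$ the valuations satisfy $\min_i\max(0,v_p(c_i)-v_p(c_s))=\max\bigl(0,\min_i v_p(c_i)-v_p(c_s)\bigr)$, so that the $e_s$-components of the elementary syzygies generate precisely the ideal of admissible values of $a_s$. Two minor points worth making explicit in a full write-up: the minimal $M$ exists because any term order on finitely many variables is a well-order (Dickson's lemma), and in the case $M=\mathrm{lp}(f)$ one concludes only that $\mathrm{lt}(f)$ lies in the \emph{ideal} generated by the $\mathrm{lt}(b_i)$, which is exactly what the definition $\mathrm{Lt}(B)=\mathrm{Lt}(I)$ requires (a single $\mathrm{lt}(b_i)$ need not divide $\mathrm{lt}(f)$ over a ring).
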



\section{The critical ideals of graphs}\label{critical} 
In this section, we introduce the main concept of this article: the critical ideals of a digraph $G$.
We will begin this section by defining the critical ideals of a digraph, presenting some examples 
and discussing some of their basic properties.
In general terms, the $i$-th critical ideal is the determinantal ideal of $i$-minors of the generalized Laplacian matrix associated to $G$.
The critical ideals of $G$ generalize the critical group of $G$ (see Proposition~\ref{correspondence0})
and the characteristic polynomials of the adjacency matrix and the Laplacian matrix associated to $G$ (see Section~\ref{characteristic}).
Moreover, with some additional requirements over $G$ we can get a stronger 
correspondence between the critical ideals of $G$ and the critical group of $G$, see Proposition~\ref{correspondence}.
 
Afterwards, in Section~\ref{gamma} we introduce the number of critical ideals that are trivial as an invariant of the digraph.
In the case of graphs, we will establish a bound between this invariant and the stability and clique numbers of the graph. 
Also, in Section~\ref{complete} we present a minimal set of generators and a reduced Gr\"obner basis for the 
critical ideals of the complete graphs. 
As a byproduct we will get expressions for the critical groups for a complete graph minus a star.
Finally, we will explore the relation between the critical ideals of a graph and the characteristic polynomial of its adjacency and Laplacian matrix.

\begin{Definition}
Given a digraph $G$ with $n$ vertices and $1\leq i \leq n$, let
\[
I_i(G,X)=\langle {\rm minors}_i(L(G,X))\rangle\subseteq \mathcal{P}[X_G]
\]
be the $i$-th critical ideal of $G$.
\end{Definition}
Note that in general the critical ideals depend on the base ring $\mathcal{P}$,
in this article we are mainly interested when $\mathcal{P}=\mathbb{Z}$.
By convention, $I_i(G,X)=\langle 1\rangle$ if $i\leq 0$ and $I_i(G,X)=\langle 0\rangle$ if $i> n$.
Clearly $I_n(G,X)$ is a principal ideal generated by the determinant of  the generalized Laplacian matrix.

Now, we present an example that illustrates the concept of critical ideal.

\begin{Example}\label{ejemplo1}
Let $H$ be the complete graph with six vertices minus the perfect matching formed by the edges 
$M_3=\{v_1v_4, v_2v_5, v_3v_6\}$ (see Figure~\ref{fig1}(a)) and $\mathcal{P}=\mathbb{Z}$.
Then, 
\[
L(H,X)=\left[\begin{array}{cccccc}
x_1&-1&-1&0&-1&-1\\
-1&x_2&-1&-1&0&-1\\
-1&-1&x_3&-1&-1&0\\
0&-1&-1&x_4&-1&-1\\
-1&0&-1&-1&x_5&-1\\
-1&-1&0&-1&-1&x_6
\end{array}\right]
\]
Using any algebraic system, for instance {\it Macaulay 2}, it is not difficult to see that $I_i(H,X)=\langle 1 \rangle $ for $i=1,2$ and
\[
I_i(H,X)=\begin{cases}
\big\langle  2,x_1,x_2,x_3,x_4,x_5,x_6\big\rangle & \text{ if } i=3,\\
\big\langle \{x_rx_s \, | \, v_rv_s\in E(H)\}\cup\{2x_r+2x_s+x_rx_s\, | \, v_rv_s \not\in E(H)\}\,\big\rangle & \text{ if } i=4,\\
\big\langle  \{x_kx_l(x_r+x_s+x_rx_s)\, | \, (r,s,k,l)\in S(H)\}\cup\{p_{(r,s,k,l)}\, | \, v_rv_s,v_kv_l\not\in E(H) \} \big\rangle & \text{ if } i=5,\\
\big\langle x_1x_2x_3x_4x_5x_6 -\sum_{(r,s,k,l)\in S(H)} x_rx_sx_kx_l - 2\sum_{(r,s,k)\in T(H)} x_rx_sx_k \big\rangle & \text{ if } i=6,
\end{cases}
\]
where 
\[
S(H)=\{(r,s,k,l)\, | \, v_rv_s\not\in E(H),\ v_kv_l\in E(H),\text{ and } \{i,j\}\cap\{k,l\}=\emptyset\},
\]
$T(H)$ are the triangles of $H$, and $p_{(r,s,k,l)}=(x_r+x_s)(x_k+x_l+x_kx_l)+(x_k+x_l)(x_r+x_s+x_rx_s)$.
Note that the expressions of the critical ideals of $H$ depend heavily on their combinatorics.
\end{Example}

Now, let us turn our attention to one of the most basic properties of the critical ideals.

\begin{Proposition}\label{BasicProp}
If $G$ is a digraph with $n$ vertices, then 
\[
\langle 0\rangle \subsetneq I_n(G,X)\subseteq \cdots \subseteq I_2(G,X)\subseteq I_1(G,X) \subseteq \langle 1\rangle.
\]
Moreover, if $H$ is an induced subdigraph of $G$ with $m$ vertices, then $I_k(H,X)\subseteq I_k(G,X) \text{ for all } 1 \leq k \leq m$.
\end{Proposition}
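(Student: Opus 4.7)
The plan is to prove three containments: the trivial upper and strict lower bounds on the chain, the monotone chain $I_k \subseteq I_{k-1}$, and the induced-subdigraph containment.

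For the outer bounds, $I_1(G,X)\subseteq \langle 1\rangle$ holds for any ideal in a polynomial ring. For $\langle 0\rangle \subsetneq I_n(G,X)$, the principal generator $\det L(G,X)$ expanded via the Leibniz formula contains the monomial $x_1x_2\cdots x_n$ with coefficient $+1$ (coming from the identity permutation on the diagonal), and this monomial cannot be cancelled by any other term in the expansion since every non-identity permutation contributes a monomial of strictly smaller degree in the variables $X_G$. Hence the generator is nonzero.

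For the chain $I_k(G,X)\subseteq I_{k-1}(G,X)$, I would use the standard Laplace expansion argument for determinantal ideals: given any $k$-square submatrix $N$ of $L(G,X)$, expanding $\det N$ along its first row writes $\det N$ as a $\mathcal{P}[X_G]$-linear combination of $(k-1)$-minors of $L(G,X)$. Since each generator of $I_k(G,X)$ arises this way, we obtain $I_k(G,X)\subseteq I_{k-1}(G,X)$.

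For the induced subdigraph part, the essential observation is that when $H=G[U]$ with $U\subseteq V(G)$, the matrix $L(H,X_H)$ coincides, entry by entry, with the principal submatrix of $L(G,X_G)$ obtained by keeping only the rows and columns indexed by $U$. Indeed, the diagonal entries agree because both equal $x_u$ for $u\in U$, and the off-diagonal entries agree because the induced-subdigraph condition guarantees $m_{(u,v)}^H = m_{(u,v)}^G$ for all $u,v\in U$. Consequently, every $k$-square submatrix of $L(H,X_H)$ is a $k$-square submatrix of $L(G,X_G)$, so every $k$-minor generator of $I_k(H,X)$ lies in $I_k(G,X)$ (via the natural inclusion $\mathcal{P}[X_H]\hookrightarrow \mathcal{P}[X_G]$), giving the desired containment.

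None of these steps presents a real obstacle; the main thing to be careful about is the induced hypothesis, which is precisely what makes $L(H,X_H)$ a submatrix of $L(G,X_G)$ (for a non-induced subdigraph one could lose arcs between vertices of $H$ that are present in $G$, and the diagonal variables and off-diagonal entries would no longer match). I would also briefly mention the convention $I_i = \langle 1\rangle$ for $i\le 0$ and $I_i = \langle 0\rangle$ for $i>n$, so that the chain statement is consistent at the endpoints.
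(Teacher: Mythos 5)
Your proof is correct and follows essentially the same route as the paper: Laplace expansion of a $k$-minor to get the descending chain, and the observation that $L(H,X_H)$ is a principal submatrix of $L(G,X_G)$ (precisely because $H$ is induced) for the second part. The only addition is your explicit Leibniz-formula justification that $\det L(G,X)\neq 0$ via the uncancellable monomial $x_1\cdots x_n$, a point the paper leaves implicit; that argument is valid.
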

\begin{proof}
Let $M$ be a $(k+1)\times (k+1)$ matrix with entries on $\mathcal{P}[X_G]$.
Since $\mathrm{det}(M)=\sum_{i=1}^{k+1} M_{i,1}\,\mathrm{det}(M({i};{1}))$, $I_{k+1}(G,X)\subseteq I_k(G,X)$ for all $1 \leq k \leq n-1$.  
On the other hand, since any submatrix of $L(H,X)$ is also a submatrix of $L(G,X)$, 
${\rm minors}_k(L(H,X))\subseteq {\rm minors}_k(L(G,X))$ for all $1 \leq k\leq m$ and therefore $I_k(H,X)\subseteq I_k(G,X)$ for all $1 \leq k\leq m$.
\end{proof}

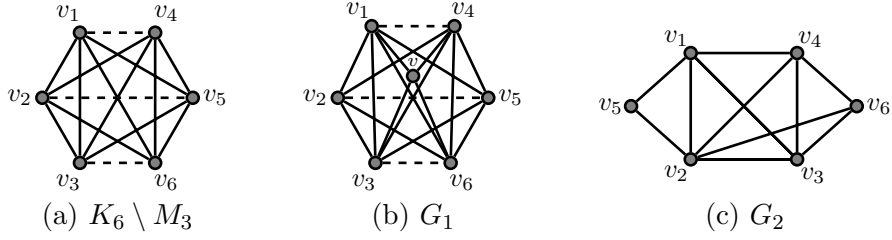
\begin{figure}[h]
\begin{center}
\begin{tabular}{c@{\extracolsep{1cm}}c@{\extracolsep{1cm}}c}
	\begin{tikzpicture}[line width=1pt, scale=1]
		\tikzstyle{every node}=[inner sep=0pt, minimum width=4.5pt] 
		\draw (0:1) node (v5) [draw, circle, fill=gray] {};
		\draw (60:1) node (v4) [draw, circle, fill=gray] {};
		\draw (120:1) node (v1) [draw, circle, fill=gray] {};
		\draw (180:1) node (v2) [draw, circle, fill=gray] {};
		\draw (240:1) node (v3) [draw, circle, fill=gray] {};
		\draw (300:1) node (v6) [draw, circle, fill=gray] {};
		\draw (0:1.3) node () {\small $v_5$};
		\draw (60:1.3) node () {\small $v_4$};
		\draw (120:1.3) node () {\small $v_1$};
		\draw (180:1.3) node () {\small $v_2$};
		\draw (240:1.3) node () {\small $v_3$};
		\draw (300:1.3) node () {\small $v_6$};
		\draw (v1) -- (v2) -- (v3) -- (v5) -- (v1) -- (v3) -- (v4) -- (v5) -- (v6) -- (v4) -- (v2) -- (v6) -- (v1);
		\draw (v1) edge[dashed] (v4);
		\draw (v2) edge[dashed] (v5);
		\draw (v3) edge[dashed] (v6);
	\end{tikzpicture}
	&
	\begin{tikzpicture}[line width=1pt, scale=1]
		\tikzstyle{every node}=[inner sep=0pt, minimum width=4.5pt] 
		\draw (0:1) node (v5) [draw, circle, fill=gray] {};
		\draw (60:1.1) node (v4) [draw, circle, fill=gray] {};
		\draw (120:1.1) node (v1) [draw, circle, fill=gray] {};
		\draw (180:1) node (v2) [draw, circle, fill=gray] {};
		\draw (240:1) node (v3) [draw, circle, fill=gray] {};
		\draw (300:1) node (v6) [draw, circle, fill=gray] {};
		\draw (90:0.29) node (v) [draw, circle, fill=gray] {};
		\draw (0:1.3) node () {\small $v_5$};
		\draw (60:1.35) node () {\small $v_4$};
		\draw (122:1.37) node () {\small $v_1$};
		\draw (180:1.3) node () {\small $v_2$};
		\draw (240:1.3) node () {\small $v_3$};
		\draw (300:1.3) node () {\small $v_6$};
		\draw (90:0.48) node () {\tiny $v$};
		\draw (v1) -- (v2) -- (v3) -- (v5) -- (v1) -- (v3) -- (v4) -- (v5) -- (v6) -- (v4) -- (v2) -- (v6) -- (v1);
		\draw (v) -- (v1); 
		\draw (v) -- (v3);
		\draw (v) -- (v4);
		\draw (v) -- (v6);
		\draw (v1) edge[dashed] (v4);
		\draw (v2) edge[dashed] (v5);
		\draw (v3) edge[dashed] (v6);
	\end{tikzpicture}
	&
	\begin{tikzpicture}[line width=1pt, scale=1]
		\tikzstyle{every node}=[inner sep=0pt, minimum width=4.5pt] 
		\draw (135:1)+(0,0.18) node (v1) [draw, circle, fill=gray] {};
		\draw (225:1)+(0,0.18) node (v2) [draw, circle, fill=gray] {};
		\draw (315:1)+(0,0.18) node (v3) [draw, circle, fill=gray] {};
		\draw (45:1)+(0,0.18) node (v4) [draw, circle, fill=gray] {};
		\draw (180:1.5)+(0,0.18) node (v5) [draw, circle, fill=gray] {};
		\draw (0:1.5)+(0,0.18) node (v6) [draw, circle, fill=gray] {};
		\draw (135:1.27)+(0,0.18) node () {\small $v_1$};
		\draw (225:1.27)+(0,0.18) node () {\small $v_2$};
		\draw (315:1.3)+(0,0.18) node () {\small $v_3$};
		\draw (45:1.25)+(0,0.18) node () {\small $v_4$};
		\draw (180:1.77)+(0,0.18) node () {\small $v_5$};
		\draw (0:1.8)+(0,0.18) node () {\small $v_6$};
		\draw (v1) -- (v2) -- (v3) -- (v4)-- (v1) -- (v3) -- (v2) -- (v4) -- (v6) -- (v2) -- (v5) -- (v1)-- (v3) -- (v6); 
	\end{tikzpicture}
	\\
	(a) $K_6\setminus M_3$
	&
	(b) $G_1$
	&
	(c) $G_2$
	\\
\end{tabular}
\end{center}
\caption{(a) $K_6\setminus M_3$, (b) $G_1$, (c) $G_2$.}
\label{fig1}
\end{figure}

If the digraph is not connected, then we can express its critical ideals as a function of the critical ideals of its connected components. 

\begin{Proposition}\label{disjoint}
If $G$ and $H$ are vertex disjoint digraphs, then 
\[
I_i(G\sqcup H,X)=\left\langle\, \bigcup_{j=0}^i I_j(G,X)I_{i-j}(H,X)\,\right\rangle \text{ for all } 1 \leq i \leq |V(G\sqcup H)|.
\]
\end{Proposition}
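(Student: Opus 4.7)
The plan is to exploit the block diagonal structure of the generalized Laplacian of a disjoint union. Since the disjoint union has no arcs between $V(G)$ and $V(H)$, the sets of variables $X_G$ and $X_H$ are disjoint and, after ordering the vertices so that those of $G$ come first, we have
\[
L(G\sqcup H,X)=\begin{pmatrix} L(G,X_G) & 0 \\ 0 & L(H,X_H)\end{pmatrix}.
\]
So the proof reduces to analyzing the minors of a block diagonal matrix with independent blocks.

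Next I would fix an $i$-square submatrix $M=L(G\sqcup H,X)[I;J]$ and split the row and column index sets according to the block they belong to, writing $I=I_G\sqcup I_H$ and $J=J_G\sqcup J_H$ with $I_G,J_G\subseteq V(G)$ and $I_H,J_H\subseteq V(H)$. After a row and column reordering, which only introduces a sign, $M$ becomes the block matrix with diagonal blocks $L(G,X_G)[I_G;J_G]$ and $L(H,X_H)[I_H;J_H]$ and zero off-diagonal blocks. The key observation is that if $|I_G|\neq |J_G|$, then $\det(M)=0$: expanding as $\sum_\sigma \mathrm{sgn}(\sigma)\prod_k M_{k,\sigma(k)}$, any $\sigma$ giving a nonzero product must send the $|I_G|$ rows coming from $G$ into the $|J_G|$ columns coming from $G$ (else a zero entry appears), which is impossible when these cardinalities differ. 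Hence the only nonzero minors correspond to splits $|I_G|=|J_G|=j$ and $|I_H|=|J_H|=i-j$ for some $0\le j\le i$, and for these
\[
\det(M)=\pm\det\bigl(L(G,X_G)[I_G;J_G]\bigr)\cdot\det\bigl(L(H,X_H)[I_H;J_H]\bigr).
\]

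From this factorization the ideal equality is immediate. The right hand side of the statement, $\langle\bigcup_{j=0}^i I_j(G,X)I_{i-j}(H,X)\rangle$, is generated by all products of a $j$-minor of $L(G,X_G)$ with an $(i-j)$-minor of $L(H,X_H)$, using the convention $I_0=\langle 1\rangle$ and $I_k=\langle 0\rangle$ for $k$ exceeding the number of vertices, so the extremal cases $j=0$ and $j=i$ correctly capture the minors lying entirely in one block. By the previous paragraph this generating set agrees, up to signs, with the set of $i$-minors of $L(G\sqcup H,X)$, so both ideals coincide.

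I do not anticipate a serious obstacle: the argument is essentially bookkeeping. The only step requiring a little care is the vanishing of $\det(M)$ when $|I_G|\neq|J_G|$, which is handled by the permutation expansion argument above; alternatively one can invoke the Cauchy--Binet or Laplace expansion along the rows indexed by $I_G$. Sign issues introduced by the row/column permutation are absorbed into the ideal, so they are harmless.
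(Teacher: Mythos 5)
Your proposal is correct and follows essentially the same route as the paper: both reduce to the fact that $L(G\sqcup H,X)=L(G,X)\oplus L(H,X)$ and that an $i$-minor of a block-diagonal matrix vanishes unless the row and column index sets split evenly across the blocks, in which case it factors as a product of a $j$-minor and an $(i-j)$-minor. The only cosmetic difference is that you justify this factorization via the Leibniz permutation expansion while the paper uses induction on the number of rows taken from the first block.
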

\begin{proof}
Let $Q=M\oplus N$ where $M\in M_m(\mathcal{P}[X])$ and $N\in M_n(\mathcal{P}[X])$, $k \in [m+n]$, and ${\bf r},{\bf s}\subseteq [m+n]$ with $|{\bf r}|=|{\bf s}|=k$.
It follows by using induction on $|{\bf r}_m|$, that
\[
\mathrm{det}(Q[{\bf r};{\bf s}])=
\begin{cases}
\mathrm{det}(Q[{\bf r}_m;{\bf s}_m]) \cdot \mathrm{det}(Q[{\bf r}_n;{\bf s}_n]) & \text{ if } |{\bf r}_m|=|{\bf s}_m|,\\
0 & \text{ otherwise},
\end{cases}
\]
where ${\bf r}_m={\bf r}\cap [m]$, ${\bf s}_m={\bf s}\cap [m]$, ${\bf r}_n={\bf r}\setminus {\bf r}_m$, and ${\bf s}_n={\bf s}\setminus {\bf s}_m$. 

Now, since $L(G\sqcup H,X)=L(G,X)\oplus L(H,X)$, we get that 
${\rm minors}_i(L(G\sqcup H,X))\setminus 0 \subseteq \{ m_1\cdot m_2 \, | \, m_1 \in {\rm minors}_j(L(G,X)) 
\text{ and } m_2\in {\rm minors}_{i-j}(L(H,X)) \text{ for some } 0\leq j \leq i\}$
for all $1 \leq i \leq |V(G\cup H)|$ and the result is obtained.
\end{proof}

Let $T_v$ be the trivial graph composed by the vertex $v$.
Since $I_1(T_v)=\langle x_v\rangle$, applying Proposition~\ref{disjoint}, we get the critical ideals of the trivial graph with $n$ vertices.

\begin{Corollary}\label{trivialgraph}
If $n\geq 1$ and $T_n$ is the graph with $n$ isolated vertices, then
\[
I_i(T_n)=\big\langle \{\prod_{j\in J} x_j\, \big| \, |J|=i\}\big\rangle \text{ for all } 1\leq i\leq n.
\]
\end{Corollary}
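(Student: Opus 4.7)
The plan is to prove this by induction on $n$, using Proposition~\ref{disjoint} as the inductive engine. Since the conventions give $I_i(T_v) = \langle 0 \rangle$ for $i \geq 2$ and $I_0(G,X) = \langle 1 \rangle$ for any $G$, most terms in the Cauchy-Binet-style formula of Proposition~\ref{disjoint} will vanish, making the recursion very clean.

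For the base case $n = 1$, the generalized Laplacian is just the $1 \times 1$ matrix $[x_{v_1}]$, so $I_1(T_1) = \langle x_{v_1} \rangle$, which matches the claim. For the inductive step, I would write $T_n = T_{n-1} \sqcup T_{v_n}$ and apply Proposition~\ref{disjoint} to obtain
\[
I_i(T_n) = \left\langle \bigcup_{j=0}^{i} I_j(T_{n-1})\, I_{i-j}(T_{v_n}) \right\rangle.
\]
Because $T_{v_n}$ has a single vertex, $I_{i-j}(T_{v_n})$ is zero unless $i-j \in \{0,1\}$, equivalently $j \in \{i-1, i\}$. Using $I_0(T_{v_n}) = \langle 1 \rangle$ and $I_1(T_{v_n}) = \langle x_{v_n} \rangle$, the formula collapses to
\[
I_i(T_n) = I_i(T_{n-1}) + x_{v_n}\, I_{i-1}(T_{n-1}).
\]

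The induction hypothesis identifies $I_i(T_{n-1})$ with the ideal of squarefree degree-$i$ monomials in $x_{v_1}, \ldots, x_{v_{n-1}}$ and similarly for $I_{i-1}(T_{n-1})$. The first summand contributes all squarefree degree-$i$ monomials that avoid $x_{v_n}$, and the second contributes all those that use $x_{v_n}$; together these are exactly the squarefree monomials $\prod_{j \in J} x_j$ with $J \subseteq [n]$ and $|J|=i$, which is what the corollary asserts. The boundary cases $i=0$ and $i > n$ follow from the conventions and need only a brief mention.

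There is no real obstacle here: the only thing to watch is bookkeeping of the two boundary indices $j = i - 1$ and $j = i$ in the disjoint-union formula, and making sure the cases $i = 1$ and $i = n$ are covered correctly by the recursion (when $I_{i-1}(T_{n-1})$ or $I_i(T_{n-1})$ collapses to $\langle 1 \rangle$ or $\langle 0 \rangle$, respectively). Since $1 \leq i \leq n$ in the statement, these endpoints behave as expected.
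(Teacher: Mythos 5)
Your proof is correct and follows essentially the same route as the paper, which likewise obtains this corollary by applying Proposition~\ref{disjoint} to the decomposition of $T_n$ into single vertices together with $I_1(T_v)=\langle x_v\rangle$; you have merely written out the induction and the boundary conventions that the paper leaves implicit.
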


Now, we will establish some basic relationships between the critical ideals and the critical group.
Before doing this, we will introduce some notation.
Given a digraph $G$ with $n$ vertices and ${\bf d}\in \mathcal{P}^{V(G)}$, let $L(G, {\bf d})$ be the matrix obtained from $L(G, X)$
where we put $x_v={\bf d}_v$ for all $v\in V(G)$.
Also, for all $1\leq i \leq n$, let 
\[
I_i(G, {\bf d})=\{f({\bf d}) \, | \, f\in I_i(G,X)\}  \subseteq \mathbb{Z}.
\]
Given an induced subdigraph $H$ of $G$, the {\it degree vector} of $H$ in $G$ is given by $d_G(H)_v=\mathrm{deg}^+_G(v)$ for all $v\in V(H)$.

\begin{Proposition}\label{correspondence0}
Let $\mathcal{P}=\mathbb{Z}$ and $G$ be a digraph (possibly with multiple edges) with $n$ vertices. 
If $K(G)\cong \bigoplus_{i=1}^{n-1}\mathbb{Z}_{f_i}$ with $ f_1 | \cdots |f_{n-1}$, then
\[
I_i(G, d_G(G)) = \langle \prod_{j=1}^i f_j\rangle \text{ for all } 1\leq i \leq n-1.
\]
\end{Proposition}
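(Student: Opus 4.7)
The plan is to reduce the statement to the classical correspondence between determinantal ideals and the Smith normal form over $\mathbb{Z}$. First I would observe that the substitution $x_v \mapsto d_G(G)_v = \mathrm{deg}^+_G(v)$ turns $L(G, X)$ into precisely the ordinary Laplacian $L(G)$: the off-diagonal entries of $L(G,X)$ already agree with those of $L(G)$, and this substitution adjusts the diagonal to the out-degrees. Since every $i$-minor of $L(G,X)$ is a polynomial in the $x_v$ whose value at $\mathbf{d} = d_G(G)$ is the corresponding $i$-minor of $L(G)$, the ideal $I_i(G, d_G(G))$ coincides with the ideal of $\mathbb{Z}$ generated by $\mathrm{minors}_i(L(G))$.

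Next I would invoke the standard fact that for any $M \in M_n(\mathbb{Z})$, the ideal of $\mathbb{Z}$ generated by the $i$-minors of $M$ is principal and equals $\langle \Delta_i(M) \rangle = \langle s_1 s_2 \cdots s_i \rangle$, where $s_1 \mid s_2 \mid \cdots \mid s_n$ are the invariant factors of $M$ and $\Delta_i(M) = \gcd(\mathrm{minors}_i(M))$. This is the classical consequence of Smith normal form: the determinantal ideals are invariants of the equivalence class $M \sim PMQ$ used earlier in the paper, and on the diagonal matrix $\mathrm{diag}(s_1,\dots,s_n)$ both sides are visibly equal.

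It then remains to identify the invariant factors of $L(G)$. By the very definition of $K(G)$, one has
\[
K(G) \oplus \mathbb{Z} \;\cong\; \mathbb{Z}^{n} / L(G)^t\,\mathbb{Z}^{n},
\]
so the Smith normal form of $L(G)^t$ (hence of $L(G)$) has diagonal entries $f_1, f_2, \ldots, f_{n-1}, 0$, the trailing $0$ being placed last in order to respect the divisibility chain $f_1 \mid \cdots \mid f_{n-1} \mid 0$. Combining this with the previous step gives $I_i(G, d_G(G)) = \langle f_1 f_2 \cdots f_i \rangle$ for all $1 \le i \le n-1$, as claimed.

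There is no deep obstacle here; the proof is essentially a translation between three descriptions of the same data (critical group, Smith normal form, determinantal divisors). The only point requiring care is the bookkeeping at the two ends: checking that evaluation at $\mathbf{d}$ genuinely commutes with forming the ideal of minors (so that one may pass from $\mathbb{Z}[X]$ to $\mathbb{Z}$ without loss), and placing the free $\mathbb{Z}$ summand at position $n$ so that the products $f_1 \cdots f_i$ for $i \le n-1$ involve only the torsion invariants.
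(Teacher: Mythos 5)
Your proof is correct and follows essentially the same route as the paper: evaluate at $\mathbf{d}=d_G(G)$ to recover the ordinary Laplacian, then identify the ideal generated by its $i$-minors with $\langle f_1\cdots f_i\rangle$ via the Smith normal form of $L(G)$, whose invariant factors are $f_1,\ldots,f_{n-1},0$ by the definition of $K(G)$. The paper's proof is just a terser version of this, leaving the determinantal-divisor/Smith-normal-form step implicit, so your write-up is if anything more complete.
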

\begin{proof}
Clearly $L(G,d_G(G))$ is equal to the Laplacian matrix $L(G)$ of $G$.
Thus ${\rm minors}_i(L(G,d_G(G)))={\rm minors}_i(L(G))$ for all $1\leq i \leq n$
and therefore
\[
I_i(G, d_G(G)) = \langle {\rm minors}_i(L(G,d_G(G))) \rangle= \langle {\rm minors}_i(L(G))\rangle= \langle \prod_{j=1}^i f_j\rangle \text{ for all } 1\leq i \leq n-1.
\vspace{-10mm}
\]
\end{proof}

\medskip

On the other hand, if $v$ is a vertex of $G$ and $L(G,v)$ is the {\it reduced Laplacian matrix}, the matrix obtained
from $L(G)$ by removing the row and column $v$, then we have the following strong version of Proposition~\ref{correspondence0}:

\begin{Proposition}\label{correspondence}
Let $\mathcal{P}=\mathbb{Z}$, $G$ be a connected digraph (possibly with multiple edges) with $n$ vertices, and $v$ a vertex of $G$.
If $G$ is Eulerian (that is, $d^+_{G}(v)=d^-_{G}(v)$ for all $v\in V(G)$) and $K(G)\cong \bigoplus_{i=1}^{n-1}\mathbb{Z}_{f_i}$ with $ f_1 | \cdots |f_{n-1}$, then
\[
I_i(G\setminus v, d_G(G\setminus v)) = \langle \prod_{j=1}^i f_j\rangle \text{ for all } 1\leq i \leq n-1.
\]
\end{Proposition}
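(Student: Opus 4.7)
The plan is to reduce the statement to the classical Smith normal form characterization of determinantal ideals, using the Eulerian hypothesis to pass between the critical group of $G$ and the cokernel of a reduced Laplacian of $G$.

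First I would observe that the matrix $L(G\setminus v, d_G(G\setminus v))$ coincides with the reduced Laplacian $L(G,v)$. Indeed, for $u,w \in V(G)\setminus\{v\}$, the off-diagonal entry of $L(G\setminus v, d_G(G\setminus v))$ is $-m_{(u,w)}$, which is the same off-diagonal entry of $L(G)$ restricted to rows/columns indexed by $V(G)\setminus\{v\}$; and the diagonal entry $d^+_G(u)$ assigned by $d_G(G\setminus v)$ is precisely the diagonal entry of $L(G)$. Consequently $\mathrm{minors}_i(L(G\setminus v, d_G(G\setminus v))) = \mathrm{minors}_i(L(G,v))$ and therefore $I_i(G\setminus v, d_G(G\setminus v)) = \langle\mathrm{minors}_i(L(G,v))\rangle$.

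Next I would invoke the Eulerian hypothesis to obtain
\[
K(G) \cong \mathbb{Z}^{V\setminus v}/L(G,v)^t\,\mathbb{Z}^{V\setminus v}.
\]
This follows because, when $d^+_G(u)=d^-_G(u)$ for every $u$, the matrix-tree theorem for digraphs gives that all principal cofactors of $L(G)$ have the same absolute value (namely, the common number of spanning arborescences), so $L(G,v)$ has full rank $n-1$ and its cokernel does not depend on the choice of $v$ up to isomorphism; identifying this cokernel with the critical group of $G$ is exactly Wagner's construction~\cite{directed} specialised to the Eulerian case. Hence the invariant factors of $L(G,v)$, counted with multiplicity, are precisely $f_1 \mid f_2 \mid \cdots \mid f_{n-1}$ (some of which may equal $1$, corresponding to trivial summands of $K(G)$).

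Finally, I would apply the standard fact that, for an integer square matrix $M$ with Smith normal form $\mathrm{diag}(d_1,\ldots,d_m)$ satisfying $d_1 \mid \cdots \mid d_m$, the $i$-th determinantal ideal $\langle \mathrm{minors}_i(M)\rangle$ equals $\langle d_1 d_2 \cdots d_i \rangle$; this is immediate because determinantal ideals are invariant under unimodular row and column operations. Applying this to $M=L(G,v)$ yields
\[
I_i(G\setminus v, d_G(G\setminus v)) = \langle \mathrm{minors}_i(L(G,v))\rangle = \Big\langle \prod_{j=1}^{i} f_j\Big\rangle,
\]
for all $1\le i\le n-1$, as desired.

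The main obstacle is the second step: carefully justifying that the $n-1$ invariant factors of the reduced Laplacian $L(G,v)$ agree with the cyclic factors $f_1,\ldots,f_{n-1}$ of $K(G)$ from the paper's definition. The paper defines $K(G)$ through the full Laplacian as the torsion part of $\mathbb{Z}^V/\mathrm{Im}\,L(G)^t$, and only for Eulerian digraphs is this equivalent to the Smith normal form of $L(G,v)$ (with the extra $\mathbb{Z}$ summand accounted for by the deleted row/column). For a non-Eulerian digraph the argument would fail because different choices of $v$ can yield non-isomorphic cokernels and the resulting ideal would no longer be intrinsic to $K(G)$.
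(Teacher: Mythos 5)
Your overall strategy coincides with the paper's: identify $L(G\setminus v, d_G(G\setminus v))$ with the reduced Laplacian $L(G,v)$, show that its cokernel realizes $K(G)$, and then read the determinantal ideals off the Smith normal form. Your first and third steps are correct. The gap is in the second step. The directed matrix--tree theorem does give that all principal cofactors of $L(G)$ agree (and are nonzero, so $L(G,v)$ has full rank), but the inference ``equal cofactors, hence the cokernel of $L(G,v)$ is independent of $v$ and isomorphic to $K(G)$'' is a non sequitur: the determinant of $L(G,v)$ only pins down the product $f_1\cdots f_{n-1}$ of its invariant factors, not the factors individually, and two integer matrices with equal determinant can have very different cokernels. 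Since the paper defines $K(G)$ through the full Laplacian, as the torsion of $\mathbb{Z}^{V}/\operatorname{Im}L(G)^t$, the isomorphism $K(G)\cong \mathbb{Z}^{V\setminus v}/\operatorname{Im}L(G,v)^t$ is precisely the content that must be proved here; deferring it to ``Wagner's construction specialised to the Eulerian case'' leaves the crux unargued.

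The paper closes this gap with a one-line unimodular congruence. Order the vertices so that $v$ comes first and let $P$ be the identity matrix with its first row replaced by the all-ones row; then $P\,L(G)\,P^t=0\oplus L(G,v)$. Indeed, left multiplication by $P$ replaces row $v$ by $\mathbf{1}^tL(G)$, the vector of column sums of $L(G)$, which vanishes exactly because $d^+_G(u)=d^-_G(u)$ for all $u$; right multiplication by $P^t$ replaces column $v$ by $L(G)\mathbf{1}$, the vector of row sums, which always vanishes; and the remaining block is $L(G)$ with row and column $v$ deleted, i.e.\ $L(G,v)$. Since $\det P=1$, we get $L(G)\sim 0\oplus L(G,v)$, hence $\mathbb{Z}^{V}/\operatorname{Im}L(G)^t\cong \mathbb{Z}\oplus\big(\mathbb{Z}^{V\setminus v}/\operatorname{Im}L(G,v)^t\big)$ and the invariant factors of $L(G,v)$ are exactly $f_1,\ldots,f_{n-1}$; your third step then finishes the argument. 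Any substitute for this step must use both the vanishing of the row sums and the vanishing of the column sums of $L(G)$ --- this is exactly where Eulerianity enters, as your closing remark correctly anticipates but your actual argument does not exploit.
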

\begin{proof}
Since $L(G\setminus v, d_G(G\setminus v))=L(G,v)$ (the reduced Laplacian matrix of $G$), 
by Proposition~\ref{correspondence0} we only need to prove that $K(G)\cong \mathbb{Z}^{V(G)\setminus v}/{\rm Im}\, L(G,v)^t$.
Now, if $I_{n-1,n-1}$ is the identity matrix of size $n-1$, then
\[
\left[\begin{array}{cc}
1&{\bf 1}\\
{\bf 0}&I_{n-1,n-1}
\end{array}\right]
L(G)
\left[\begin{array}{cc}
1&{\bf 1}\\
{\bf 0}&I_{n-1,n-1}
\end{array}\right]^t
= 
0\oplus L(G,v).
\]
Since ${\rm det} (\left[\begin{array}{cc}
1&{\bf 1}\\
{\bf 0}&I_{n-1,n-1}
\end{array}\right])=1$, then $L(G) \sim 0\oplus L(G,v)$ and we get the result.
\end{proof}

\medskip

\begin{Remark}
Note that, in general, Proposition~\ref{correspondence} is not valid for digraphs.
However, we can get a similar result for matrices in $M_{n\times n}$  with entries in a principal ideal domain
and such that $M{\bf 1}={\bf 0}$ and ${\bf 1}M={\bf 0}$.
\end{Remark}

The next example shows how Proposition~\ref{correspondence} can be used to recover the critical group of a graph from its critical ideals.
In this sense, critical ideals generalize the critical group of a graph.

\begin{Example}\label{ecorrespondence}
Let $H$ be the complete graph with six vertices minus a perfect matching as in Figure~\ref{fig1}(a).
and $G$ be a graph such that $H=G \setminus v$ for some vertex $v$ of $G$.
Thus, applying Proposition~\ref{correspondence} we can get the critical group of $G$ as an evaluation of the critical ideals of $H$.
For instance, if $G_1$ is the graph obtained from $H$ (see Figure~\ref{fig1}(b)) by adding a new vertex $v$ and the edges 
$vv_1,vv_3,vv_4,vv_6$, then $d_G(H)=(5,4,5,5,4,5)$.
Moreover, using the critical ideals of $H$ calculated in Example~\ref{ejemplo1}, 
we get that $f_i=1$ for all $i\leq 4$, $f_5=20$, $f_6=140$; that is, 
\[
K(G_1)\cong \mathbb{Z}_{20}\oplus\mathbb{Z}_{140}.
\]
On the other hand, if we only know the critical ideals of induced subgraphs of $G$ that are different to $G\setminus v$, 
then we cannot reconstruct completely the critical group of $G$. 
For instance, if $G_2$ is the graph obtained by adding the vertices $v_5$, $v_6$ 
and the edges $v_5v_1, v_5v_2, v_6v_2,v_6v_3,v_6v_4$ to the complete graph with four vertices (see Figure~\ref{fig1}(c)),
then it is not difficult to see using any algebraic system that $f_i(G_2)=1$ for $1\leq i\leq 4$ and $f_5(G_2)=185$.
However, when we apply Proposition~\ref{correspondence} to the critical ideals of 
the induced subgraph by the vertices $v_1$, $v_2$, $v_3$, and $v_4$ of $G_2$ (isomorphic to $K_4$)  
we can only obtain that $f_1(G_2)=f_2(G_2)=1$, $f_3(G_2)\, | \, 5$ and $f_4(G_2)\, | \,175$.
\end{Example}


\subsection{The invariant $\gamma$}\label{gamma}
In this subsection, we will present an invariant that will play an important role 
on the study of the critical ideals of a digraph.

\begin{Definition}
Given a digraph $G$ and a commutative ring with identity $\mathcal{P}$, let 
\[
\gamma_{\mathcal{P}}(G)=\max\{i\, | \, I_i(G, X)=\langle 1\rangle \}.
\]
\end{Definition}

Using the canonical homomorphisms $f:\mathbb{Z}\rightarrow \mathcal{P}$ given by $f(a)=af(1)$
is not difficult to get that $\gamma_{\mathbb{Z}}(G) \leq \gamma_{\mathcal{P}}(G)$.
For instance, it is clear that $\gamma_{\mathbb{Z}}(G) \leq \gamma_{\mathbb{Q}}(G)$.
Also, there exists a close relation between $\gamma_{\mathbb{Z}}(G)$ and the 
number of invariant factors of the critical group of $G$ that are equal to $1$.
For instance, if $IF_1(G)$ denote the number of invariant factors of the critical group of $G$ that are equal to $1$,
then $\gamma_{\mathbb{Z}}(G) \leq IF_1(G)$.
We found that $\gamma_\mathcal{P}(G)$ behaves better than the number of invariant factors of the critical group of a digraph that are one.
For instance, it is not difficult to see from the definition and Proposition~\ref{BasicProp} that if $H$ 
is an induced subdigraph of $G$, then $\gamma_\mathcal{P}(H)\leq \gamma_\mathcal{P}(G)$.
However, if $n\geq 3$, $G=K_{2,n}$ is a complete bipartite graph, and $H=K_{1,n}$ is an induced subgraph of $G$, then $IF_1(G)=2< n=IF_1(H)$. 

\medskip

Now, we present a relation between $\gamma_\mathcal{P}(G)$ and the stability and the clique numbers of $G$.
Before doing this, we will define the stability and the clique numbers of a graph.
A subset $S$ of the vertices of a graph $G$ is called {\it stable} or {\it independent} if there is no edge of $G$ with ends in $S$.
A stable set is called {\it maximal} if it is under the inclusion of sets.
The {\it stability number} of $G$, denoted by $\alpha(G)$, is given by
\[
\alpha(G)={\rm max}\{|S|\, | \, S \text{ is a stable set of } G\}.
\]
In a similar way, a subset $C$ of the vertices of a graph $G$ is called a {\it clique} if all the pairs of vertices in $C$ are joined by an edge of $G$.
A clique set is called {\it maximal} if it is under the inclusion of sets.
The {\it clique number} of $G$, denoted by $\omega(G)$, is given by
\[
\omega(G)={\rm max}\{|C|\, | \, C \text{ is a clique set of } G\}.
\]

\begin{Lemma}\label{bound1}
If $G$ is a digraph (possibly with multiple edges) and $v$ is a vertex of $G$, then 
\[
\gamma_\mathcal{P}(G)-\gamma_\mathcal{P}(G\setminus v)\leq 2.
\]
\end{Lemma}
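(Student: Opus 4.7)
The plan is to show that if $k = \gamma_{\mathcal{P}}(G)$, then $I_{k-2}(G\setminus v, X) = \langle 1\rangle$, which gives $\gamma_{\mathcal{P}}(G\setminus v) \geq k-2$. Write $H = G\setminus v$, and note that $L(H,X)$ is obtained from $L(G,X)$ by deleting the row and column indexed by $v$. The key is to express every $k$-minor of $L(G,X)$ in terms of lower-order minors of $L(H,X)$ together with the single extra variable $x_v$.

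I would classify each $k\times k$ submatrix $L(G,X)[I;J]$ according to whether $v$ belongs to $I$ and/or $J$. In the case $v\notin I\cup J$, the minor is already a $k$-minor of $L(H,X)$, so it lies in $I_k(H,X)\subseteq I_{k-2}(H,X)$. In the case where $v$ belongs to exactly one of $I$ or $J$, expanding the determinant along the row (or column) indexed by $v$ yields a $\mathbb{Z}$-linear combination of $(k-1)$-minors of $L(H,X)$, because every entry of $L(G,X)$ in the row or column of $v$ apart from the $(v,v)$ entry is an integer $-m_{(v,u)}$ or $-m_{(u,v)}$; hence the minor lies in $I_{k-1}(H,X)\subseteq I_{k-2}(H,X)$.

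The interesting case is $v\in I\cap J$. Expanding along row $v$ separates the diagonal contribution $x_v\cdot\det L(G,X)[I\setminus v;J\setminus v]$, whose cofactor is a $(k-1)$-minor of $L(H,X)$, from the remaining terms $\pm m_{(v,j)}\cdot\det L(G,X)[I\setminus v;J\setminus j]$ with $j\neq v$. Each of these residual $(k-1)\times (k-1)$ determinants still contains the column of $v$, but along that column all entries are integers $-m_{(u,v)}$, so a second expansion along column $v$ writes the cofactor as a $\mathbb{Z}$-linear combination of $(k-2)$-minors of $L(H,X)$. Thus every $k$-minor in this case lies in $x_v\cdot I_{k-1}(H,X)+I_{k-2}(H,X)$. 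Combining all three cases gives the containment
\[
I_k(G,X)\ \subseteq\ I_{k-2}(H,X)\cdot\mathcal{P}[X_G]\ +\ x_v\cdot I_{k-1}(H,X)\cdot\mathcal{P}[X_G].
\]

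Finally, since $1\in I_k(G,X)$, we can write $1=p+x_v q$ with $p\in I_{k-2}(H,X)\cdot\mathcal{P}[X_G]$ and $q\in I_{k-1}(H,X)\cdot\mathcal{P}[X_G]$. Specializing $x_v=0$ and using that the generators of $I_{k-2}(H,X)$ (minors of $L(H,X)$) do not involve $x_v$, we get $1=p|_{x_v=0}\in I_{k-2}(H,X)$, as desired. The main obstacle is the bookkeeping in the last case: one has to be careful that the second expansion really stays within $L(H,X)$ and that the specialization argument respects the ideal generated by the minors. Once those two points are checked, the inequality $\gamma_{\mathcal{P}}(G)-\gamma_{\mathcal{P}}(G\setminus v)\leq 2$ follows.
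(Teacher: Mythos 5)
Your proof is correct and takes essentially the same route as the paper: the same three-way classification of $k$-minors of $L(G,X)$ according to whether $v$ lies in neither, exactly one, or both of the row and column index sets, yielding the containment $I_k(G,X)\subseteq\langle x_v I_{k-1}(G\setminus v, X),\, I_{k-2}(G\setminus v, X)\rangle$ (the paper's Claim~\ref{claim:idealescriticos}). Your explicit specialization $x_v=0$ at the end merely spells out a step the paper leaves implicit when it concludes that this ideal is proper.
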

\begin{proof}
We begin with a simple relation between the critical ideals of $G$ and $G\setminus v$. 
\begin{Claim}\label{claim:idealescriticos}
If $G$ is a digraph (possibly with multiple edges) with $V(G)=\{v_1,\ldots, v_n\}$, then 
\[
I_j(G,X)\subseteq \langle x_1I_{j-1}(G\setminus v_1, X\setminus x_1), I_{j-2}(G\setminus v_1, X\setminus x_1) \rangle \text{ for all } 1\leq j \leq n.
\]
\end{Claim}
\begin{proof}
Let $I=\{ i_1, \ldots i_j\}\subseteq [n]$, $I'=\{i'_i, \ldots, i'_j\} \subseteq [n]$ and $m_{I,I'}={\rm det}(L(G, X)[I,I'])\in I_j(G)$.
If $1\notin I\cup I'$, then $m_{I,I'}\in I_j(G\setminus v_1, X\setminus x_1)$.
In a similar way, if $1\in I\Delta I'$, then $m_{I,I'}\in I_{j-1}(G\setminus v_1, X\setminus x_1)$.
On the other hand, if $1\in I\cap I'$, then 
\[
m_{I,I'}\in \langle x_1 I_{j-1}(G\setminus v_1, X\setminus x_1), I_{j-2}(G\setminus v_1, X\setminus x_1) \rangle.
\]
Finally, the result follows because $I_j(G\setminus v_1, X\setminus x_1)\subseteq I_{j-1}(G\setminus v_1, X\setminus x_1)\subseteq I_{j-2}(G\setminus v_1, X\setminus x_1)$.
\end{proof}
Let $g=\gamma_\mathcal{P}(G\setminus v_1)$.
Since $I_{i}(G\setminus v_1, X\setminus x_1)\neq \langle1\rangle$ for all $i\geq g+1$, by Claim~\ref{claim:idealescriticos} 
\[
I_{g+3}(G, X) \subseteq \langle  x_1I_{g+2}(G\setminus v_1, X\setminus x_1), I_{g+1}(G\setminus v_1, X\setminus x_1) \rangle \neq \langle 1\rangle.
\]
Therefore $I_{g+3}(G, X) \neq \langle 1\rangle$.
That is, $\gamma_\mathcal{P}(G)-\gamma_\mathcal{P}(G\setminus v_1)\leq 2$.
\end{proof}

Since $\gamma_\mathcal{P}(T_m)=0$ and $\gamma_\mathcal{P}(K_{n+1})=1$ for all $m,n\ge 1$, then using Lemma~\ref{bound1} we have the following result:

\begin{Theorem}\label{bounds}
If $G$ is a digraph (possibly with multiple edges) with $n$ vertices, then
\[
\gamma_\mathcal{P}(G)\leq 2(n-\omega(G))+1\text{ and } \gamma_\mathcal{P}(G)\leq 2(n-\alpha(G)).
\]
\end{Theorem}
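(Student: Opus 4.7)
The plan is to promote Lemma~\ref{bound1} from a single-vertex estimate into a telescoped bound, and then substitute two carefully chosen induced subdigraphs whose $\gamma_\mathcal{P}$ is already known. Concretely, I would first observe that by induction on the number of deleted vertices, iterating Lemma~\ref{bound1} along any sequence of vertex deletions taking $G$ down to an induced subdigraph $H$ on $m$ vertices yields
\[
\gamma_\mathcal{P}(G) \leq \gamma_\mathcal{P}(H) + 2(n - m).
\]
Each step of the induction just stacks one application of $\gamma_\mathcal{P}(G') - \gamma_\mathcal{P}(G' \setminus v) \leq 2$ on top of the previous one, and the order of deletion does not matter.

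With that tool in hand each of the two bounds reduces to a one-line substitution. For the clique bound, pick a maximum clique $C$ of $G$, so that $|C| = \omega(G)$ and $G[C] \cong K_{\omega(G)}$; feeding $\gamma_\mathcal{P}(K_{\omega(G)}) = 1$ (the value recalled immediately before the theorem) into the telescoped inequality with $H = G[C]$ gives $\gamma_\mathcal{P}(G) \leq 1 + 2(n - \omega(G))$. For the stability bound, pick a maximum stable set $S$, so that $|S| = \alpha(G)$ and $G[S] \cong T_{\alpha(G)}$; since Corollary~\ref{trivialgraph} gives $I_1(T_m, X) = \langle x_1, \ldots, x_m \rangle \neq \langle 1 \rangle$ we have $\gamma_\mathcal{P}(T_{\alpha(G)}) = 0$, and the telescoped inequality with $H = G[S]$ yields $\gamma_\mathcal{P}(G) \leq 2(n - \alpha(G))$.

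There is essentially no obstacle beyond what has already been absorbed into Lemma~\ref{bound1}, which is where all the real work sits; the present proof is just a bookkeeping exercise. The only points worth flagging explicitly in the write-up are: justifying the telescoping (a one-line induction), interpreting $\omega$ and $\alpha$ on the underlying simple graph so that $G[C]$ (respectively $G[S]$) has the claimed isomorphism type, and checking the degenerate cases $\omega(G) = 1$ or $\alpha(G) = n$ (in which $G$ is edgeless and both inequalities collapse to $\gamma_\mathcal{P}(T_n) = 0$). If any subtlety appears it will be at the level of digraphs with multiple arcs, where one should be explicit that $\omega$ and $\alpha$ are read off the underlying simple graph so that the reference subdigraphs $K_{\omega(G)}$ and $T_{\alpha(G)}$ are indeed induced.
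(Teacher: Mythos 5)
Your proposal is correct and follows essentially the same route as the paper: the paper's proof likewise iterates Lemma~\ref{bound1} down to a maximum clique (using $\gamma_\mathcal{P}(K_{\omega(G)})=1$ from Theorem~\ref{CK}) and down to a maximum stable set (using $\gamma_\mathcal{P}(T_{\alpha(G)})=0$ from Corollary~\ref{trivialgraph}). The telescoping induction and the degenerate-case remarks you flag are exactly the bookkeeping the paper leaves implicit.
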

\begin{proof}
The result follows by using that $\gamma_\mathcal{P}(T_{\alpha(G)})=0$ (Corollary~\ref{trivialgraph}), $\gamma_\mathcal{P}(K_{\omega(G)})=1$ (Theorem~\ref{CK})
and the fact that $\gamma_\mathcal{P}(G)-\gamma_\mathcal{P}(G\setminus v)\leq 2$ for all $v\in V(G)$ (Lemma~\ref{bound1}).
\end{proof}

This result is interesting when the stability or the clique number is almost the number of vertices of the graph.
For instance, if $G$ is the complete graph ($\omega(G)=n$), then $\gamma_\mathcal{P}(G)\leq 1$.
Theorem~\ref{CK} proves that this bound is tight.
Similarly to Theorem~\ref{bounds}, in~\cite{lorenzini91} an upper bound for the number of invariant factors different to one 
of the critical group of a graph $G$ in terms of the number of independent cycles of $G$ was found.

In~\cite{lorenzini89} a similar result to the obtained in the proof of Theorem~\ref{bounds} was obtained. 
Namely in~\cite{lorenzini89} it was shown that if $G$ is a simple graph and $e\in E(G)$, 
then the number of invariant factors different to $1$ of $G$ and $G\setminus e$ differ by at most $1$.

Clearly, a simple graph $G$ has $\gamma_\mathcal{P}(G)=0$ if and only if $G$ is the trivial graph.
Moreover, in~\cite{alfaro12} it was shown that a simple graph $G$ has $\gamma_\mathbb{Z}(G)=1$ if and only if $G$ is the complete graph.
Also, all the simple graphs with $\gamma_\mathbb{Z}$ equal to $2$ were characterized in~\cite{alfaro12}.
 
It is not difficult to prove that the bound $\gamma_\mathcal{P}(G)\leq 2(n-\alpha(G))$ is tight. 
For instance, it is easy to prove that if $P_{2n+1}$ 
is an odd path (see Corollary~\ref{coropath}), then $\alpha(P_{2n+1})=n+1$ and $\gamma_\mathcal{P}(P_{2n+1})=2n=2(2n+1-(n+1))$. 
Moreover, in~\cite{trees} it was shown that if $T$ is a tree, then $\gamma_\mathbb{Z}(T)$ is equal to its $2$-matching number.
This result proves that the bound $\gamma_\mathcal{P}(G)\leq 2(n-\alpha(G))$ 
is tight for any value of the stability number and the number of vertices of the graph.
An interesting open question is the characterization of the simple graphs that satisfy the bounds given in Theorem~\ref{bounds}.

Next example shows a graph $G$ with $\gamma_\mathbb{Z}(G)=5$ such that $L(G,X)$ has no $5$-minor equal to $1$.

\begin{Example}
Let $G$ be the cone of $H$ (obtained from $H$ when we add a new vertex $v$ and all the edges between the vertex $v$ and the vertices of $H$), see Figure~\ref{fige}.
\begin{figure}[h]
\begin{center}
\begin{tabular}{c@{\extracolsep{2cm}}c}
\multirow{9}{3cm}{
	\begin{tikzpicture}[line width=1pt, scale=1]
		\tikzstyle{every node}=[inner sep=0pt, minimum width=4.5pt] 
		\draw (135:1) node (v1) [draw, circle, fill=gray] {};
		\draw (225:1)+(-1,0) node (v2) [draw, circle, fill=gray] {};
		\draw (225:1) node (v3) [draw, circle, fill=gray] {};
		\draw (315:1) node (v4) [draw, circle, fill=gray] {};
		\draw (315:1)+(1,0) node (v5) [draw, circle, fill=gray] {};
		\draw (45:1) node (v6) [draw, circle, fill=gray] {};
		\draw (v1)+(-0.2,0.2) node () {\small $v_1$};
		\draw (v2)+(0,-0.3) node () {\small $v_2$};
		\draw (v3)+(0,-0.3) node () {\small $v_3$};
		\draw (v4)+(0,-0.3) node () {\small $v_4$};
		\draw (v5)+(0,-0.3) node () {\small $v_5$};
		\draw (v6)+(0.2,0.2) node () {\small $v_6$};
		\draw (0,1) node () {\small $H$};
		\draw (v1) -- (v3) -- (v4) -- (v6) -- (v1);
		\draw (v1) -- (v2) -- (v3);
		\draw (v6) -- (v5) -- (v4);
		\draw (v6) edge (v3);
	\end{tikzpicture}
}
& \\
&
$
L(c(H), X)=
\left[\begin{array}{ccccccc}
 x_1 & -1 &  -1 &  0 & 0 & -1 & -1\\
 -1 &  x_2 & -1 & 0 & 0 & 0 & -1\\
 -1 & -1 &  x_3 & -1 & 0 & -1 & -1\\
0 &  0 &  -1 &  x_4 & -1 & -1 & -1\\
0  & 0 &  0 &  -1 & x_5 & -1 & -1\\
-1 & 0 &  -1 &  -1 & -1 & x_6 & -1\\
-1 & -1 &  -1 &  -1 & -1 & -1 & x_7
\end{array}\right]
$\\
& \\
\end{tabular}
\end{center}
\caption{A graph $H$ with six vertices and the generalized Laplacian matrix of its cone.}
\label{fige}
\end{figure}
Since ${\rm det}(L(G,X)[\{1, 2, 3,4, 5\}, \{2, 3, 5, 6, 7\}]) =x_2+x_5+x_2x_5$ and 
${\rm det}(L(G,X)[\{1, 2, 3, 5, 6\}, \{ 2,4, 5, 6, 7\}])=-(1+x_2+x_5+x_2x_5)$, then $\gamma_\mathbb{Z}(G)=5$. 
However, it is not difficult to check that no $5$-minor of $L(G,X)$ is equal to one or another integer.
\end{Example}

Now, we turn our attention to the critical ideals of the complete graph.


\subsection{The critical ideals of the complete graphs}\label{complete}
We begin this subsection with an expression for the determinant of the complete graph.

\begin{Proposition}\label{DK}
If $K_n$ is the complete graph with $n\geq 1$ vertices, then 
\[
\mathrm{det}(L(K_n,X))=\prod_{j=1}^{n} (x_j+1) - \sum_{i=1}^n \prod_{j\neq i} (x_j+1).
\]
\end{Proposition}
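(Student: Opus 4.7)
The plan is to prove the formula by induction on $n$, exploiting the multilinearity of the determinant in the last row so that the complete-graph structure reduces to the complete graph on one fewer vertex. First I would rewrite $L(K_n,X) = D - J$, where $D = \mathrm{diag}(x_1+1,\ldots,x_n+1)$ and $J$ is the $n \times n$ all-ones matrix; this reformulation already suggests that the answer should arise from a rank-one perturbation of a diagonal matrix, and in fact the formula is exactly what the matrix determinant lemma produces after passing to the fraction field $\mathbb{Q}(x_1,\ldots,x_n)$:
\[
\det(D - \mathbf{1}\mathbf{1}^T) = \det(D)\bigl(1 - \mathbf{1}^T D^{-1}\mathbf{1}\bigr) = \prod_{j=1}^n(x_j+1) - \sum_{i=1}^n \prod_{j \ne i}(x_j+1).
\]
Since both sides lie in $\mathcal{P}[X]$, the polynomial identity then descends to the base ring. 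This is the slickest proof, but it does require a brief justification of the descent.

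If instead one prefers a self-contained elementary argument, I would split the last row as $(-1,\ldots,-1) + (0,\ldots,0,x_n+1)$ and apply multilinearity to write $\det(L(K_n,X)) = \det(M_1) + \det(M_2)$. Expanding $M_2$ along its last row gives $(x_n+1)\det(L(K_{n-1}, X'))$ with $X' = (x_1,\ldots,x_{n-1})$, which is handled by the inductive hypothesis. For $M_1$, whose last row is $(-1,\ldots,-1)$, I would subtract that last row from each of the first $n-1$ rows; the first $n-1$ rows collapse to $(0,\ldots,0,x_i+1,0,\ldots,0,0)$ (with $x_i+1$ in position $i$ and a $0$ in the last column), and then cofactor expansion along the last column yields $\det(M_1) = -\prod_{i=1}^{n-1}(x_i+1)$.

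Combining these and inserting the inductive formula for $\det(L(K_{n-1},X'))$ gives
\[
\det(L(K_n,X)) = -\prod_{i=1}^{n-1}(x_i+1) + (x_n+1)\prod_{i=1}^{n-1}(x_i+1) - (x_n+1)\sum_{i=1}^{n-1}\prod_{\substack{j \ne i \\ j \le n-1}}(x_j+1),
\]
and the main (purely bookkeeping) step is to recognize that the first term provides the missing $i = n$ summand of $\sum_{i=1}^n \prod_{j \ne i}(x_j+1)$, while distributing $x_n+1$ into the remaining sum supplies the summands for $i = 1, \ldots, n-1$. The base case $n = 1$ is immediate, since $\det(L(K_1,X)) = x_1 = (x_1+1) - 1$. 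The only potential obstacle is keeping track of the signs and indices in Step 4; everything else is standard determinant manipulation, and both routes (matrix determinant lemma or induction) terminate in the same cancellation.
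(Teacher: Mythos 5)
Your proposal is correct, and both of your routes work. Your second (inductive) route is essentially the paper's argument in a different dress: the paper expands $\det(L(K_{n+1},X))$ along the last column and recognizes the resulting cofactors as $\det(L(K_n,X))$ evaluated at $x_k=-1$, which equals $-\prod_{j\neq k}(x_j+1)$ by the inductive formula; you instead split the last row by multilinearity into $(-1,\ldots,-1)+(0,\ldots,0,x_n+1)$ and clean up the all-$(-1)$ piece by row reduction. The bookkeeping in your final display is right: the term $-\prod_{i=1}^{n-1}(x_i+1)$ is exactly the $i=n$ summand, and distributing $x_n+1$ over $\prod_{j\neq i,\,j\leq n-1}(x_j+1)$ produces the summands for $i\leq n-1$. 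Your first route, via the decomposition $L(K_n,X)=D-\mathbf{1}\mathbf{1}^T$ and the matrix determinant lemma, is genuinely different from the paper and arguably more illuminating: it explains \emph{why} the answer is $\prod_j(x_j+1)\bigl(1-\sum_i (x_i+1)^{-1}\bigr)$ rather than verifying it, and it avoids induction entirely. The descent step you flag is harmless: the identity is proved in $\mathbb{Q}(x_1,\ldots,x_n)$, both sides lie in $\mathbb{Z}[x_1,\ldots,x_n]$, and the general case follows by applying the canonical homomorphism $\mathbb{Z}[X]\to\mathcal{P}[X]$, under which determinants are preserved. The only cost of the slick route is that one-sentence justification; the cost of the inductive route is the sign and index bookkeeping, which you have handled correctly.
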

\begin{proof}
We will use induction on $n$.
For $n=1$, it is clear that $\mathrm{det}(L(K_n,X))=x_1=(x_1+1)-1$.
Now, assume that $n\geq 2$.
Expanding the determinant of $L(K_{n+1},X)$ by the last column and using induction hypothesis
\vspace{-3mm}
\begin{eqnarray*}
\mathrm{det}(L(K_{n+1},X)) &=& x_{n+1}\cdot \mathrm{det}(L(K_n,X))+ \sum_{k=1}^n \mathrm{det}(L(K_n,X))_{x_k=-1}\\
&=& x_{n+1}\cdot \left[ \prod_{j=1}^{n} (x_j+1) - \sum_{i=1}^n \prod_{j\neq i} (x_j+1) \right]-\sum_{k=1}^n \prod_{j\neq k} (x_j+1)\\
&=&\prod_{j=1}^{n+1} (x_j+1) -\prod_{j=1}^{n} (x_j+1)  - (x_{n+1}+1)\sum_{i=1}^n \prod_{j\neq i} (x_j+1)\\
&=& \prod_{j=1}^{n+1} (x_j+1) - \sum_{i=1}^{n+1} \prod_{j\neq i} (x_j+1).
\end{eqnarray*}
\end{proof}

The next result gives us a description of a reduced Gr\"obner basis of the critical ideals of the complete graph.

\begin{Theorem}\label{CK}
If $K_n$ is the complete graph with $n\geq 2$ vertices and $1\leq m \leq n-1$, then 
\[
 B_m=\big\{\prod_{i\in I} (x_i+1) \, |\, I\subseteq [n] \text{ and } |I|=m-1\big\}
\] 
is a reduced Gr\"obner basis of $I_m(K_n, X)$ with respect to the graded lexicographic order.
\end{Theorem}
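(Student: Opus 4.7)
The plan is to proceed in three steps: first show $I_m(K_n,X)=\langle B_m\rangle$, then check that all $S$-polynomials of pairs from $B_m$ reduce to zero (so by Lemma~\ref{GBT} the set is a Gröbner basis), and finally verify reducedness. The key structural observation I would exploit is the decomposition $L(K_n,X)=\mathrm{diag}(x_1+1,\ldots,x_n+1)-J_n$, where $J_n$ is the $n\times n$ all-ones matrix, so that every submatrix takes the form ``sparse diagonal minus all-ones block'' and succumbs to the matrix determinant lemma.

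For the ideal equality I would classify each $m$-minor $\det L(K_n,X)[R;C]$ by $k=m-|R\cap C|$. If $k\geq 2$, the rows indexed by $R\setminus C$ are all identically $(-1,\ldots,-1)$ outside the diagonal block on $R\cap C$, so two of them coincide and the minor vanishes. If $k=1$, say $R=I\cup\{r\}$, $C=I\cup\{c\}$ with $|I|=m-1$ and $r\neq c$, the matrix determinant lemma applied to $\mathrm{diag}(x_i+1:i\in I)\oplus(0)-J_m$ yields $-\prod_{i\in I}(x_i+1)=-b_I$, proving simultaneously that $b_I\in I_m(K_n,X)$ and that every nonvanishing non-principal minor is $\pm b_I$. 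If $k=0$, Proposition~\ref{DK} expresses the principal minor as $(x_{i_0}+1)\,b_{R\setminus\{i_0\}}-\sum_{i\in R}b_{R\setminus\{i\}}$ for any chosen $i_0\in R$, so it lies in $\langle B_m\rangle$. Collating the three cases yields $I_m(K_n,X)=\langle B_m\rangle$.

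For the Gröbner property I would induct on $k=|I\triangle J|/2$. Writing $A=I\setminus J$, $B=J\setminus I$, and using that all leading coefficients equal $1$, one has $S(b_I,b_J)=X_B\,b_I-X_A\,b_J$. The base case $k=1$ follows from the identity $b_I(x_b+1)=b_J(x_a+1)=\prod_{i\in(I\cap J)\cup\{a,b\}}(x_i+1)$, which collapses the $S$-polynomial to $b_J-b_I$, visibly reducible to zero by subtracting $b_J$ and then $-b_I$. For $k\geq 2$, pick $a\in A$, $b\in B$ and set $L=(I\setminus\{a\})\cup\{b\}$; a direct expansion yields
\[
S(b_I,b_J)=X_{B\setminus\{b\}}\,S(b_I,b_L)+x_a\,S(b_L,b_J),
\]
and since $|I\triangle L|=2$ and $|L\triangle J|=2(k-1)$, the inductive hypothesis handles both summands. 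This is a concrete instance of Buchberger's chain criterion, whose classical field-case proof transfers verbatim because every leading coefficient here is $1$. Reducedness is then immediate: each $b_I$ has leading coefficient $1$, and any non-leading monomial $X_S$ in $b_I=\sum_{S\subseteq I}X_S$ has $S\subsetneq I$, so cannot be divisible by $X_J$ for any $|J|=m-1$ with $J\neq I$ (such divisibility would force $J\subseteq S\subsetneq I$ with $|J|=|I|$).

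The most delicate point I expect to be the Gröbner verification: the challenge is not producing the syzygy-like identity above, but checking that the inductive recursion really yields a standard representation of $S(b_I,b_J)$, so that the leading-term bound in the strong reduction $\to_{B_m}$ of Lemma~\ref{GBT} is respected at each step rather than just giving an expression inside $\langle B_m\rangle$. The uniform leading coefficient $1$ is what prevents the coefficient-related subtleties of Gröbner theory over $\mathbb{Z}$ from intruding and lets the classical Buchberger machinery apply.
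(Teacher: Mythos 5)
Your proposal is correct, and its first half (generation) is essentially the paper's argument in different clothing: the paper observes that every non-principal $m$-minor is, up to row permutation, a principal minor with the variables in $C\setminus R$ set to $-1$, and then reads off the three cases (zero, $\pm\prod_{i\in I}(x_i+1)$, and the Proposition~\ref{DK} expansion of a principal minor) exactly as you do via the classification by $|R\cap C|$ and the matrix determinant lemma. Where you genuinely diverge is the $S$-polynomial check. The paper handles every pair in one shot: from the syzygy $p_{I_1}\prod_{i\in I_2\setminus I_1}(x_i+1)=p_{I_2}\prod_{i\in I_1\setminus I_2}(x_i+1)$ it rewrites $S(p_{I_1},p_{I_2})$ as a two-term combination $h_1p_{I_1}+h_2p_{I_2}$ whose summands have the distinct leading monomials $X_{I_1\cup I_2}/x_j$ (some $j\in I_2\setminus I_1$) and $X_{I_1\cup I_2}/x_k$ (some $k\in I_1\setminus I_2$), which is precisely the hypothesis of the reduction criterion stated after the definition of strong reduction; no induction is needed, since this syzygy already covers arbitrary $|I_1\triangle I_2|$. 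You instead run a chain-criterion induction on $|I\triangle J|$. Your recursion identity is correct, but --- as you yourself flag --- the bare inductive hypothesis ``$S(b_I,b_L)\rightarrow_{B_m}0$'' does not formally license concluding that $X_{B\setminus\{b\}}\,S(b_I,b_L)+x_a\,S(b_L,b_J)\rightarrow_{B_m}0$: you must carry through the recursion a representation with pairwise distinct leading terms, and a priori the two groups of summands could produce coinciding leading monomials. It does work out: unrolling your recursion along the chain $I=L_0,L_1,\ldots,L_k=J$ gives $S(b_I,b_J)=\sum_t c_t\,b_{L_t}$ in which the summand attached to $L_t$ has leading monomial $X_{I\cup J}/x_{c}$ for elements $c\in I\triangle J$ that are pairwise distinct as $t$ varies, so the same reduction criterion applies. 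That bookkeeping is the price of the inductive route and is exactly what the paper's single syzygy buys you out of; your generation and reducedness arguments otherwise match the paper's.
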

\begin{proof}
First, we will prove that $B_m$ generates $I_m(K_n, X)$.
If $M$ is a square submatrix of $L(K_n,X)$ of size $m$, 
then there exist $I\subset [n]$ with $|I|=m$ and $J\subseteq I$ such that $M$ is equal to 
\[
L(K_n,X)[I; I]_{\{x_j=-1 \text{ for all }j\in J\}}.
\]
Thus, since
\[
\mathrm{det}(L(K_n,X))[I;I]_{\{x_j=-1 \text{ for all }j\in J\}}=
\begin{cases}
\prod_{i_j\in I\setminus J} (x_{i_j}+1) &\text{  if } |J|=1,\\
0 &\text{  if } |J|\geq 2,
\end{cases}
\]
and $\mathrm{det}(L(K_n,X))[I;I]=-(x_{i_m}+1)\cdot \mathrm{det}(L(K_n,X))[I;I]_{\{x_{i_m}=-1\}} + \sum_{k=1}^m \mathrm{det}(L(K_n,X))[I;I]_{\{x_{i_k}=-1\}}$,
then $B_m$ generates $I_m(K_n,X)$.

Finally, we will prove that $B_m$ is a reduced Gr\"obner basis of $I_m(K_n, X)$ for all $1\leq m \leq n-1$.
Let $I_1,I_2\subset [n]$ with $|I_1|,|I_2|=m-1$ and $p_I=\prod_{i\in I} (x_i+1)$. 
It is not difficult to see that $lt(p_I)=\prod_{i\in I} x_i$ and
\[
p_{I_1}\cdot \prod_{i\in I_2\setminus I_1} (x_i+1)-p_{I_2} \cdot \prod_{i\in I_1\setminus I_2} (x_i+1)=0.
\]
Thus
\begin{eqnarray*}
S(p_{I_1}, p_{I_2}) &=& \frac{lt(p_{I_2})}{lt(p_{I_1\cap I_2})} p_{I_1}- \frac{lt(p_{I_1})}{lt(p_{I_1\cap I_2})} p_{I_2}\\  
&=& \left(\prod_{i\in I_2\setminus I_1} (x_i+1)-\prod_{i\in I_2\setminus I_1}  x_i\right) \cdot  p_{I_2} - 
\left(\prod_{i\in I_1\setminus I_2} (x_i+1)-\prod_{i\in I_1\setminus I_2} x_i\right)\cdot p_{I_1}
\end{eqnarray*}
and $S(p_{I_1}, p_{I_2})\rightarrow_{B_m} 0$.
Therefore, $B_m$ is a reduced Gr\"obner basis of $I_i(K_n, X)$.
\end{proof}

\begin{Remark}
Note that, since $\prod_{\emptyset}=1$, $I_{1}(K_n, X)=\langle 1\rangle$ and therefore $\gamma(K_n)=1$.
\end{Remark}

Using the expression for the critical ideals of the complete graph given in Theorem~\ref{CK}, 
we can get the primary decomposition of the critical ideals of the complete graph.
\begin{Corollary}
If $K_n$ is the complete graph with $n\geq 2$ vertices and $1\leq m \leq n-1$, then 
\[
I_m(K_n, X)=\bigcap_{I\subset [n], |I|=n-m+2} \langle \{x_i+1\, | \, i\in I\}\rangle.
\]
\end{Corollary}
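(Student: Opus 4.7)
The plan is to reduce the corollary to the standard primary decomposition of a squarefree monomial ideal via the ring automorphism $\phi\colon \mathcal{P}[X]\to \mathcal{P}[y_1,\ldots,y_n]$ sending $x_i\mapsto y_i-1$. By Theorem~\ref{CK}, $\phi$ identifies $I_m(K_n,X)$ with the squarefree monomial ideal
\[
J_m = \Big\langle \prod_{i\in I} y_i \;\Big|\; I\subseteq[n],\ |I|=m-1 \Big\rangle \subseteq \mathcal{P}[y_1,\ldots,y_n],
\]
and it identifies $\langle\{x_i+1:i\in A\}\rangle$ with $\langle y_a : a\in A\rangle$ for every $A\subseteq[n]$. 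Since $\phi$ is an isomorphism, it commutes with intersection, so the corollary is equivalent to the combinatorial identity
\[
J_m = \bigcap_{A\subseteq[n],\ |A|=n-m+2} \langle y_a : a\in A\rangle.
\]

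For the inclusion $\subseteq$ I would argue by pigeonhole. Fix a generator $\prod_{i\in I} y_i$ with $|I|=m-1$ and any $A\subseteq [n]$ with $|A|=n-m+2$. Since $|I|+|A|=n+1>n$, the intersection $I\cap A$ is nonempty, and any $a\in I\cap A$ gives a variable $y_a$ dividing the generator. Hence every generator of $J_m$ lies in each ideal $\langle y_a:a\in A\rangle$, and thus in the intersection.

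For the reverse inclusion, the key point is that the right-hand side, being an intersection of monomial ideals, is itself a monomial ideal in the $y_i$. So it suffices to show that every monomial $y^{\alpha}$ lying in the intersection already belongs to $J_m$. A monomial $y^\alpha$ is contained in $\langle y_a:a\in A\rangle$ precisely when $\mathrm{supp}(\alpha)\cap A\neq \emptyset$, so $y^\alpha$ lies in the full intersection precisely when $\mathrm{supp}(\alpha)$ meets every $(n-m+2)$-subset of $[n]$; equivalently, when $|\mathrm{supp}(\alpha)|\geq m-1$. Any such monomial is divisible by $\prod_{i\in I} y_i$ for any $(m-1)$-subset $I\subseteq \mathrm{supp}(\alpha)$, and therefore lies in $J_m$.

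The main obstacle, such as it is, is to make the monomial-ideal argument rigorous over the arbitrary commutative ring $\mathcal{P}$: one needs to verify that $\bigcap_{|A|=n-m+2}\langle y_a:a\in A\rangle$ is $\mathcal{P}$-spanned by monomials, so that membership can be tested one monomial term at a time. This follows because each $\langle y_a:a\in A\rangle$ is a monomial ideal with squarefree generators, and the intersection of monomial ideals in a polynomial ring over any commutative ring is generated by the least common multiples of the monomial generators. Once this is granted, the remainder of the proof is purely combinatorial and hinges only on the pigeonhole estimate and the support-size characterization above.
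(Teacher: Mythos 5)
Your proof is correct: the substitution $x_i\mapsto y_i-1$ turns the generating set of Theorem~\ref{CK} into the ideal of all squarefree monomials of degree $m-1$, and your pigeonhole/support argument is exactly the standard primary decomposition of that ideal, which is precisely how the paper intends the corollary to follow from Theorem~\ref{CK} (the paper itself omits the proof). The care you take about term-by-term membership in monomial ideals over an arbitrary commutative ring $\mathcal{P}$ is a welcome extra.
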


As an application of Theorem~\ref{CK}, we find the critical group of all the graphs with $\omega(G)=|V(G)|-1$.

\begin{Corollary}
Let $n> m\geq 1$ and $K_{n+1} \setminus S_m$ be the graph obtained from $K_{n+1}$ by deleting the $m$ edges of the star $S_m$.
Then
\[
K(K_{n+1} \setminus S_m)=
\begin{cases}
\mathbb{Z}_{n+1}^{n-2m}\oplus\mathbb{Z}_{n(n+1)}^{m-2}\oplus\mathbb{Z}_{n(n+1)(n-m)} & \text{ if } m\leq \left\lfloor n/2\right\rfloor,\\
\\
\mathbb{Z}_{n}^{2m-n}\oplus\mathbb{Z}_{n(n+1)}^{n-m-2}\oplus\mathbb{Z}_{n(n+1)(n-m)} & \text{ if } m\geq \left\lfloor n/2\right\rfloor.
\end{cases}
\]
\end{Corollary}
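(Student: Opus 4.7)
The plan is to apply Proposition~\ref{correspondence} to $G:=K_{n+1}\setminus S_m$ with $v$ equal to the center $v_0$ of the deleted star. Labelling the leaves of $S_m$ as $v_1,\ldots,v_m$ and the other vertices of $K_{n+1}$ as $v_{m+1},\ldots,v_n$, the hypothesis $n>m$ ensures that $G$ is connected, and as an undirected graph $G$ is automatically Eulerian. The induced subgraph $G\setminus v_0$ is exactly $K_n$ on $\{v_1,\ldots,v_n\}$, and the degree vector is $d_G(v_i)=n-1$ for $i\le m$ and $d_G(v_i)=n$ for $i>m$; thus the specialization $x_i\mapsto d_G(v_i)$ sends each factor $x_i+1$ from Theorem~\ref{CK} to $n$ when $i\le m$ and to $n+1$ when $i>m$.

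Next I would read off $\prod_{k=1}^{j}f_k$ for $1\le j\le n-1$ from Theorem~\ref{CK}: the generators of $I_j(K_n,X)$ are the products $\prod_{i\in I}(x_i+1)$ with $|I|=j-1$, which after substitution become $n^{k}(n+1)^{j-1-k}$ with $k=|I\cap[m]|$ ranging over $\max(0,j-1-(n-m))\le k\le \min(m,j-1)$. Because $\gcd(n,n+1)=1$, the gcd of all such integers is $n^{e(j)}(n+1)^{g(j)}$ with $e(j)=\max(0,j-1-n+m)$ and $g(j)=\max(0,j-1-m)$, and Proposition~\ref{correspondence} then yields
\[
\prod_{k=1}^{j}f_k \;=\; n^{e(j)}(n+1)^{g(j)}\qquad (1\le j\le n-1).
\]
For $j=n$ I would use Proposition~\ref{DK}: the same substitution collapses $\prod_j(x_j+1)-\sum_i\prod_{j\ne i}(x_j+1)$ to $n^{m-1}(n+1)^{n-m-1}\bigl[n(n+1)-m(n+1)-(n-m)n\bigr]=(n-m)\,n^{m-1}(n+1)^{n-m-1}$, which gives the full product $\prod_{k=1}^{n}f_k$.

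Taking successive ratios $f_j=(\prod_{k\le j}f_k)/(\prod_{k\le j-1}f_k)$ shows that $f_j$ picks up a factor of $n+1$ precisely when $j$ crosses the threshold $j\ge m+2$ (an increment in $g$), and a factor of $n$ precisely when $j\ge n-m+2$ (an increment in $e$). When $m\le\lfloor n/2\rfloor$ the thresholds satisfy $m+2\le n-m+2$, so the $f_j$'s for $1\le j\le n-1$ consist of $m+1$ ones, then $n-2m$ copies of $n+1$, then $m-2$ copies of $n(n+1)$; when $m\ge\lfloor n/2\rfloor$ the order of the thresholds reverses and one gets $n-m+1$ ones, $2m-n$ copies of $n$, and $n-m-2$ copies of $n(n+1)$. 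In either regime one checks that $\prod_{j=1}^{n-1}f_j=n^{m-2}(n+1)^{n-m-2}$, so dividing the total product yields $f_n=n(n+1)(n-m)$; collecting the cyclic summands gives the two stated decompositions of $K(K_{n+1}\setminus S_m)$.

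The main technical obstacle is the bookkeeping around the two thresholds $m+2$ and $n-m+2$: one must verify that the extremal values of $k$ producing the gcd are genuinely attained within the allowed range, that the counts of invariant factors indeed match the exponents $n-2m$, $m-2$, $2m-n$ and $n-m-2$, and that the leftover quotient $f_n$ comes out to $n(n+1)(n-m)$ uniformly in both regimes.
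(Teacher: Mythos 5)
Your proposal is correct and follows essentially the same route as the paper: delete the star's center $v_0$, note $G\setminus v_0\cong K_n$ with degree vector $(n-1,\ldots,n-1,n,\ldots,n)$, evaluate the generators from Theorem~\ref{CK} and the determinant from Proposition~\ref{DK} at that vector to get $I_j(K_n,\mathbf d)=\langle n^{\max(0,j-1-n+m)}(n+1)^{\max(0,j-1-m)}\rangle$ and $\langle(n-m)n^{m-1}(n+1)^{n-m-1}\rangle$, and recover the invariant factors via Proposition~\ref{correspondence}. Your gcd and threshold bookkeeping reproduces exactly the case formulas in the paper's proof (including the same harmless boundary overlap at $m=\lfloor n/2\rfloor$), so there is nothing to add.
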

\begin{proof}
Let $v_{n+1}$ be the vertex of $K_{n+1}\setminus S_m$ of degree $n-m$. 
Thus
\[
{\bf d}=d_{K_{n+1}-S_m}(K_{n+1}\setminus v_{n+1})=(\underbrace{n-1,\ldots,n-1}_{m \mathrm{\ times}},\underbrace{n,\ldots,n}_{n-m\mathrm{\ times}}).
\]
Now, by Proposition~\ref{DK}, $I_n(K_n,{\bf d})= \langle n^{m-1}(n+1)^{n-m-1}(n-m)\rangle$,
\[
I_{i+1}(K_n,{\bf d})=
\begin{cases}
\langle 1\rangle &\text{ if } i\leq m,\\
\langle (n+1)^{i-m}\rangle &\text{ if } m<i\leq n-m, \\
\langle n^{i+m-n}(n+1)^{i-m}\rangle &\text{ if } n-m<i< n-1,
\end{cases}
\]
when $m\leq \left\lfloor n/2\right\rfloor$, and
\[
I_{i+1}(K_n,{\bf d})=
\begin{cases}
\langle 1\rangle &\text{ if } i\leq n-m, \\
\langle n^{i+m-n}\rangle&\text{ if } n-m<i\leq m, \\
\langle n^{i+m-n}(n+1)^{i-m}\rangle &\text{ if } m<i<n-1,
\end{cases}
\]
when $m\geq \left\lfloor n/2\right\rfloor$.
Finally, applying Proposition~\ref{correspondence} we get the result.
\end{proof}

To the authors knowledge, the critical group of these graphs had not been calculated before.
We finish this section by exploring a relation between the critical ideals and the characteristic polynomial of the adjacency matrix of a graph.


\subsection{The critical ideals and the characteristic polynomials}\label{characteristic}
In~\cite{lorenzini08}, Lorenzini showed a deep relation between the critical group and the Laplacian spectrum of a graph.
For instance, Lorenzini (\cite[Proposition 3.2]{lorenzini08}) proved that if $\lambda$ is an integer eigenvalue of $L(G)$  of multiplicity $\mu(\lambda)$,
then $K(G)$ contains a subgroup isomorphic to $\mathbb{Z}_{\lambda}^{\mu(\lambda)-1}$. 
In this subsection, we will present the relation that exists between the critical ideals 
and the characteristic polynomial of the adjacency matrix of a graph.
If $G$ is loopless and we take $x_i=t$ for all $1\leq i \leq n$, then $\mathrm{det}(L(G,X))$
is equal to the characteristic polynomial $p_G(t)$ of the adjacency matrix of $G$
and the critical ideals of $G$ are ideals in $\mathcal{P}[t]$.
Let $I_i(G,t)=I_i(G, X)_{\{x_j=t\, | \, \forall \, 1 \leq j \leq n\}}$ for all $1\leq i\leq n$.
It is not difficult to see that if $\mathcal{P}$ is a field, then the ideals $I_i(G,t)$ are principal.
That is, there exist $p_i(t)\in \mathcal{P}[t]$ for all $1\leq i \leq n$ such that
\[
I_i(G,t)=\langle \prod_{j=1}^i p_j(t)\rangle \text{ for all } 1\leq i\leq n.
\]
Thus, $p_G(t)=\prod_{j=1}^n p_j(t)$ is a factorization of the characteristic polynomial of the adjacency matrix of $G$. 
In a similar way, if we take $x_i=d_i-t$ for all $1\leq i \leq n$, then
we recover a factorization of the characteristic polynomial  of the Laplacian matrix of $G$ from their critical ideals.

Therefore, the critical ideals of a graph are a generalization of the characteristic polynomial the adjacency matrix and the Laplacian matrix of $G$. 
For instance, if $G$ is the complete graph with six vertices minus a perfect matching (as in Example~\ref{ejemplo1}) 
and $\mathcal{P}=\mathbb{Q}$, then
\[
I_i(G,t)=
\begin{cases}
\langle 1\rangle & \text{ if } 1\leq i\leq 4,\\
\langle t^2(t+2)\rangle & \text{ if } i= 5,\\
\langle t^3(t+2)^2(t-4)\rangle & \text{ if } i=6.
\end{cases}
\]
Note that in general the $t$-critical ideals depend on the base ring. 
For instance, if  $\mathcal{P}=\mathbb{Z}$, then
\[
I_i(G,t)=
\begin{cases}
\langle 1\rangle & \text{ if } i=1,2,\\
\langle 2,t\rangle & \text{ if } i= 3,\\
\langle t^2,4t\rangle & \text{ if } i= 4,\\
\langle t^3(t+2),4t^2(t+2)\rangle & \text{ if } i= 5,\\
\langle t^3(t+2)^2(t-4)\rangle & \text{ if } i=6.
\end{cases}
\]

The critical ideals are a stronger invariant than the adjacency spectrum of the graph.
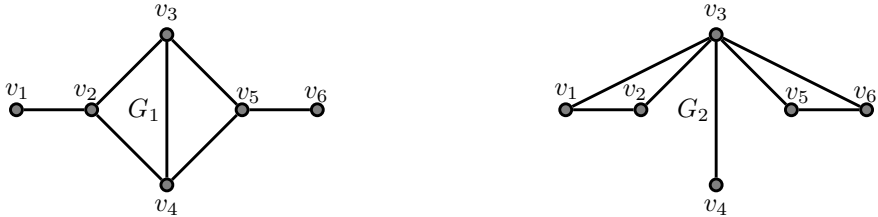
\begin{figure}[h] \centering
	\begin{tabular}{c@{\extracolsep{3cm}}c}
		\begin{tikzpicture}[line width=1.1pt, scale=1]
			\tikzstyle{every node}=[inner sep=0pt, minimum width=4.5pt] 
			\draw (180:2) node[draw, circle, fill=gray] (v1) {};
 			\draw (0:1) node[draw, circle, fill=gray] (v5) {};
 			\draw (90:1) node[draw, circle, fill=gray] (v3) {};
 			\draw (180:1) node[draw, circle, fill=gray] (v2) {};
 			\draw (270:1) node[draw, circle, fill=gray] (v4) {};
			\draw (0:2) node[draw, circle, fill=gray] (v6) {};
			\draw (v1) to (v2);
 			\draw (v2) to (v3);
 			\draw (v2) to (v4);
 			\draw (v3) to (v5);
 			\draw (v4) to (v5);
			\draw (v3) to (v4);
			\draw (v5) to (v6);
			\draw (173:2) node {\small $v_1$};
			\draw (168:1.1) node {\small $v_2$};
			\draw (90:1.3) node {\small $v_3$};
			\draw (270:1.3) node {\small $v_4$};
 			\draw (10:1.1) node {\small $v_5$};
			\draw (6:2) node {\small $v_6$};
			\draw (-0.3,0) node {\small $G_1$};
		\end{tikzpicture}
		&
		\begin{tikzpicture}[line width=1.1pt,scale=1]
			\tikzstyle{every node}=[inner sep=0pt, minimum width=4.5pt] 
			\draw (180:2) node[draw, circle, fill=gray] (v1) {};
 			\draw (0:1) node[draw, circle, fill=gray] (v5) {};
 			\draw (90:1) node[draw, circle, fill=gray] (v3) {};
 			\draw (180:1) node[draw, circle, fill=gray] (v2) {};
 			\draw (270:1) node[draw, circle, fill=gray] (v4) {};
			\draw (0:2) node[draw, circle, fill=gray] (v6) {};
			\draw (v1) to (v2);
 			\draw (v1) to (v3);
 			\draw (v2) to (v3);
 			\draw (v3) to (v4);
 			\draw (v3) to (v5);
			\draw (v3) to (v6);
			\draw (v5) to (v6);
			\draw (173:2) node {\small $v_1$};
			\draw (168:1.1) node {\small $v_2$};
			\draw (90:1.3) node {\small $v_3$};
			\draw (270:1.3) node {\small $v_4$};
 			\draw (10:1.1) node {\small $v_5$};
			\draw (6:2) node {\small $v_6$};
			\draw (-0.3,0) node {\small $G_2$};
		\end{tikzpicture}
	\end{tabular}
	\caption{\small The graphs $G_1$ and $G_2$ have the same characteristic polynomial.}
	\label{Gfullh}
\end{figure}
For instance, if $\mathcal{P}=\mathbb{Z}$, $G_1$ and $G_2$ are the graphs given in Figure~\ref{Gfullh}, 
then they are cospectral, but 
\[
I_i(G_1,t)=
\begin{cases}
\langle 1\rangle & \text{ if } 1\leq i \leq 4,\\
\langle 2(t+1), (t+1)\cdot (t^2+1)\rangle & \text{ if } i=5,\\
\langle (t-1)\cdot (t+1)^2\cdot (t^3-t^2-5t+1)\rangle & \text{ if } i=6,
\end{cases}
\]
and
\[
I_i(G_2,t)=
\begin{cases}
\langle 1\rangle & \text{ if } 1\leq i \leq 3,\\
\langle 2,(t+1)\rangle & \text{ if } i=4,\\
\langle 4(t+1), (t+1)\cdot (t-3)\rangle & \text{ if } i=5,\\
\langle (t-1)\cdot (t+1)^2\cdot (t^3-t^2-5t+1)\rangle & \text{ if } i=6.
\end{cases}
\]


\section{Critical ideals of the cycle}\label{combinatorial}
The main goal of this section is to get a minimal set of generators and the reduced Gr\"obner basis of the critical ideals of the cycle.
Before of doing this, we will get some combinatorial expressions for the minors of the generalized Laplacian matrix of a digraph.
These expressions will be very useful in order to get some algebraic relations between the generators of the critical ideals of the cycle.

We begin with some concepts of digraphs.
Given a digraph $D$, a subdigraph $C$ of $D$ is called a {\it directed} \emph{$1$-factor} if and only if $d_C^+(v)=d_C^-(v)=1$ for all $v\in V(C)$.
The number of connected components of $C$ will be denoted by $c(C)$.

\begin{Theorem}\label{det}
If $D$ is a digraph with $n$ vertices (possibly with multiple arcs and loops), then 
\[
\mathrm{det}(-A(D))=\sum_{C\in \overset{\rightarrow}{\mathcal{F}}} (-1)^{c(C)},
\]
where $\overset{\rightarrow}{\mathcal{F}}$ is the set of spanning directed $1$-factors of $D$.
Moreover,
\[
\mathrm{det}(L(D,X))=\sum_{U\subseteq V(D)} \mathrm{det}(-A(D[U]^{wl})) \cdot \prod_{v\notin U} x_v,
\]
where $D[U]$ is the induced subgraph of $D$ by $U$ and $D[U]^{wl}$ is the graph obtained from $D[U]$ when we delete the possible loops.
\end{Theorem}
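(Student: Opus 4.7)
The plan is to prove both statements directly from the Leibniz expansion of the determinant, organizing the resulting sum over permutations in two different ways: by cycle structure for the first claim, and by fixed-point set for the second.

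For the first formula, I would start from
\[
\mathrm{det}(-A(D)) = \sum_{\sigma\in S_n} \mathrm{sgn}(\sigma)\prod_{v\in V(D)} \bigl(-m_{(v,\sigma(v))}\bigr) = (-1)^n\sum_{\sigma\in S_n} \mathrm{sgn}(\sigma)\prod_{v\in V(D)} m_{(v,\sigma(v))}.
\]
Each permutation $\sigma$ decomposes into disjoint cycles (fixed points counting as 1-cycles, realized by loops at $v$), so $\mathrm{sgn}(\sigma)=(-1)^{n-c(\sigma)}$, where $c(\sigma)$ is the number of cycles. Combined with the $(-1)^n$ prefactor this gives $(-1)^{c(\sigma)}$. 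The term $\prod_v m_{(v,\sigma(v))}$ then counts the number of ways to select, for each vertex $v$, one of the arcs from $v$ to $\sigma(v)$; each such choice produces a distinct spanning directed $1$-factor $C$ of $D$ with $c(C)=c(\sigma)$, and conversely any spanning directed $1$-factor arises in exactly one way from a unique permutation. Summing over all choices for every $\sigma$ thus regroups the sum as $\sum_{C\in \overset{\rightarrow}{\mathcal{F}}} (-1)^{c(C)}$.

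For the second formula, I would apply Leibniz to $L(D,X)$ whose diagonal entries are $x_v$ and whose off-diagonal entries are $-m_{(u,v)}$:
\[
\mathrm{det}(L(D,X)) = \sum_{\sigma\in S_n}\mathrm{sgn}(\sigma)\prod_{v\in \mathrm{Fix}(\sigma)} x_v \cdot \prod_{v\notin \mathrm{Fix}(\sigma)}\bigl(-m_{(v,\sigma(v))}\bigr).
\]
Grouping permutations by the complement of their fixed-point set, say $U=V(D)\setminus \mathrm{Fix}(\sigma)$, and noting that $\sigma$ then restricts to a derangement of $U$ while fixing everything outside $U$ (so $\mathrm{sgn}(\sigma)=\mathrm{sgn}(\sigma|_U)$), I would rewrite
\[
\mathrm{det}(L(D,X)) = \sum_{U\subseteq V(D)}\Bigl(\prod_{v\notin U} x_v\Bigr)\sum_{\pi\in \mathcal{D}_U} \mathrm{sgn}(\pi)\prod_{v\in U}\bigl(-m_{(v,\pi(v))}\bigr),
\]
where $\mathcal{D}_U$ denotes the derangements of $U$. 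Since $D[U]^{wl}$ has no loops, the $(v,v)$-entries of $-A(D[U]^{wl})$ vanish, so in the Leibniz expansion of $\mathrm{det}(-A(D[U]^{wl}))$ only derangement permutations contribute, and the inner sum is precisely $\mathrm{det}(-A(D[U]^{wl}))$. Substituting yields the desired identity.

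Neither step poses a conceptual obstacle; both reduce to the careful bookkeeping of signs and of the correspondence between permutations and subdigraphs. The only point that requires attention is the handling of loops: in Part (1) loops are allowed and contribute to fixed points of $\sigma$, whereas in Part (2) the passage from $D[U]$ to $D[U]^{wl}$ is exactly what enforces the derangement condition coming from $U=V\setminus\mathrm{Fix}(\sigma)$. Making this correspondence explicit is the most delicate part of the write-up.
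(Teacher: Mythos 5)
Your proof is correct and is essentially the argument the paper has in mind: the paper's proof of this theorem is a one-line deferral to the classical argument of Biggs (Proposition 7.2), which is exactly the Leibniz-expansion bookkeeping by cycle structure and fixed-point sets that you carry out in full. The only difference is that you supply the details (including the multiplicity count $\prod_v m_{(v,\sigma(v))}$ for multiple arcs and the role of $D[U]^{wl}$ in enforcing the derangement condition) that the paper leaves to the citation.
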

\begin{proof}
It follows from arguments similar to those in~\cite[Proposition 7.2]{biggs93}.
\end{proof}

\begin{Remark}
If we identify a graph $G$ with the digraph $D_G$, obtained from $G$ 
by replacing each edge $uv$ of $G$ by the arcs $\overset{\longrightarrow}{uv}$ and $\overset{\longrightarrow}{vu}$,
then an elementary graph is identified with a spanning directed $1$-factor and therefore the first part of
Theorem~\ref{det} is equivalent to~\cite[Proposition 7.2]{biggs93}.
In this way, Theorem~\ref{det} generalizes the expression obtained in~\cite[Proposition 7.2]{biggs93}.
\end{Remark}

Next we will present an example of the use of Theorem~\ref{det}.

\begin{Example}\label{exa1}
If $D$ is the digraph given by $V(D)=\{v_1,v_2,v_3,v_4\}$ (see Figure~\ref{fig3}) and 
\[
E(D)=\{\overset{\longrightarrow}{v_1v_1},\overset{\longrightarrow}{v_1v_2}, \overset{\longrightarrow}{v_2v_3},\overset{\longrightarrow}{v_3v_4},\overset{\longrightarrow}{v_4v_1},\overset{\longrightarrow}{v_4v_2}\},
\]
then the digraph $D$ has two spanning directed $1$-factors;
$\{\overset{\longrightarrow}{v_1v_2}, \overset{\longrightarrow}{v_2v_3},\overset{\longrightarrow}{v_3v_4},\overset{\longrightarrow}{v_4v_1}\}$ 
and $\{\overset{\longrightarrow}{v_1v_1},\overset{\longrightarrow}{v_2v_3},\overset{\longrightarrow}{v_3v_4},\overset{\longrightarrow}{v_4v_2}\}$.
Thus, $\mathrm{det}(A(D))=(-1)^1+(-1)^2=0$.
Also, since $\{\overset{\longrightarrow}{v_1v_2}, \overset{\longrightarrow}{v_2v_3},\overset{\longrightarrow}{v_3v_4},\overset{\longrightarrow}{v_4v_1}\}$ and 
$\{\overset{\longrightarrow}{v_2v_3},\overset{\longrightarrow}{v_3v_4},\overset{\longrightarrow}{v_4v_2}\}$ are the directed $1$-factors of $D^{wl}$,
$\mathrm{det}(L(D,X))=x_1x_2x_3x_4-x_1-1$.
\end{Example}

\vspace{-3mm}
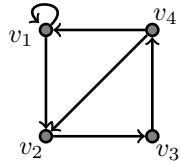
\begin{figure}[h]
\begin{center}
	\begin{tikzpicture}[line width=1pt, scale=1]
		\tikzstyle{every node}=[inner sep=0pt, minimum width=4.5pt] 
		\draw (135:1)+(0,0.18) node (v1) [draw, circle, fill=gray] {};
		\draw (225:1)+(0,0.18) node (v2) [draw, circle, fill=gray] {};
		\draw (315:1)+(0,0.18) node (v3) [draw, circle, fill=gray] {};
		\draw (45:1)+(0,0.18) node (v4) [draw, circle, fill=gray] {};
		\draw (150:1.20)+(0,0.18) node () {\small $v_1$};
		\draw (225:1.27)+(0,0.18) node () {\small $v_2$};
		\draw (315:1.3)+(0,0.18) node () {\small $v_3$};
		\draw (45:1.25)+(0,0.18) node () {\small $v_4$};
		\draw (v1) edge[->] (v2); 
		\draw (v1) edge[->, loop] (v1); 
		\draw (v2) edge[->] (v3);
		\draw (v3) edge[->] (v4);
		\draw (v4) edge[->] (v1) edge[->] (v2);
	\end{tikzpicture}
\end{center}
\caption{ A digraph with four vertices.}
\label{fig3}
\end{figure}

When $G$ is a tree, the determinant of the generalized Laplacian matrix given in Theorem~\ref{det}, only depends on its matchings.
Therefore, in this case we can get an explicit combinatorial expression for the 
determinant of the generalized Laplacian.
Given a set of edges $\mu$, let $V(\mu)$ be the set of vertices of the induced subgraph by $\mu$.

\begin{Lemma}\label{dettree1m}
If $T$ is a tree, then
\[
\mathrm{det}(L(T,X))=\sum_{\mu \in\mathcal{V}_1(T)} (-1)^{|\mu|} \! \! \prod_{v \notin V(\mu)}\!\! x_v.
\]
\end{Lemma}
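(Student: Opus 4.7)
The plan is to specialize Theorem~\ref{det} to a tree $T$ and identify which terms survive. Since $T$ has no loops, $T[U]^{wl}=T[U]$ for every $U\subseteq V(T)$, so Theorem~\ref{det} gives
\[
\det(L(T,X)) = \sum_{U\subseteq V(T)} \det(-A(T[U])) \prod_{v\notin U} x_v,
\]
and it suffices to compute $\det(-A(T[U]))$ for each induced subgraph $T[U]$, which is itself a forest.

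The key observation is that the digraph associated to a forest contains no directed cycles of length $\geq 3$ (a directed cycle of length $k\geq 3$ in the digraph of $T$ would yield a cycle of length $k$ in $T$ itself, contradicting acyclicity). Hence every spanning directed $1$-factor of $T[U]$ is a vertex-disjoint union of $2$-cycles, and such $1$-factors correspond bijectively to perfect matchings of $T[U]$. Since a forest admits at most one perfect matching (by the standard leaf-matching induction: any leaf must be matched to its unique neighbor, then proceed on the remaining forest), there is at most one such $1$-factor $C$; when it exists, $c(C)=|U|/2$, so by Theorem~\ref{det}
\[
\det(-A(T[U])) = \begin{cases} (-1)^{|U|/2} & \text{if } T[U] \text{ has a perfect matching},\\ 0 & \text{otherwise.}\end{cases}
\]

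Next I would set up a bijection between subsets $U\subseteq V(T)$ such that $T[U]$ has a perfect matching and matchings $\mu$ of $T$. In one direction, $\mu\mapsto V(\mu)$: $\mu$ is by definition a matching of $T[V(\mu)]$ covering every vertex of $V(\mu)$, hence a perfect matching of $T[V(\mu)]$. In the other direction, $U$ maps to its unique perfect matching. The two assignments are mutually inverse, and under this correspondence $|U|/2=|\mu|$ and $V(T)\setminus U=V(T)\setminus V(\mu)$. Substituting into the displayed sum yields exactly
\[
\det(L(T,X)) = \sum_{\mu\in\mathcal{V}_1(T)} (-1)^{|\mu|} \prod_{v\notin V(\mu)} x_v.
\]

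The only nontrivial point is the uniqueness of the perfect matching in a forest, which I expect to handle by the short leaf-matching argument above; everything else is a bookkeeping rewrite of Theorem~\ref{det}. No serious obstacle is anticipated.
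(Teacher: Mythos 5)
Your proposal is correct and follows essentially the same route as the paper: specialize Theorem~\ref{det} to the (loopless) tree, observe that a forest's only spanning directed $1$-factors are disjoint unions of $2$-cycles coming from a perfect matching, and reindex the sum over subsets $U$ by matchings $\mu$ with $U=V(\mu)$. The paper simply states the forest determinant fact without proof, whereas you supply the (correct) justification via the no-long-directed-cycles argument and the uniqueness of a perfect matching in a forest.
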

\begin{proof}
It follows from Theorem~\ref{det} and the fact that if $S$ is a forest, then 
\[
\mathrm{det}(A(S))=
\begin{cases}
(-1)^{|\mu|} & \text{ if } \mu \text{ is a perfect matching of } S, \text{ and}\\
0 & \text{ otherwise.}
\end{cases}
\vspace{-5mm}
\]
\end{proof}

Moreover, Theorem~\ref{det} and Lemma~\ref{dettree1m} can be used to get combinatorial expressions 
for the determinants of the generalized Laplacian matrices of graphs.

\begin{Corollary}\label{detkpc}
Let $n$ be a natural number, $K_n$ be the complete graph with $n$ vertices, $P_n$ be the path with $n$ vertices, and $C_n$ the cycle with  $n$ vertices.
Then
\begin{description}
\item[(i)] $\displaystyle{ \mathrm{det}(L(K_n, X))=\sum_{I\subseteq [n]} (|I|-n+1)\cdot \prod_{i\in I} \, x_i }$,
\\
\item[(ii)] $\displaystyle{ \mathrm{det}(L(P_n, X))=\sum_{\mu \in \mathcal{V}_1(P_n)} (-1)^{|\mu|}\cdot \prod_{v\not\in V(\mu)} \, x_v }$,
\\
\item[(iii)] $\displaystyle{ \mathrm{det}(L(C_n, X))=\sum_{\mu \in \mathcal{V}_1(C_n)} (-1)^{|\mu|}\cdot \prod_{v\not\in V(\mu)}  x_v -2}$,
\end{description}
where $\mathcal{V}_1(G)$ is the set of matchings of $G$. 
\end{Corollary}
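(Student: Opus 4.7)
The plan is to derive all three identities as direct corollaries of Theorem~\ref{det}, which expresses
\[
\mathrm{det}(L(G,X))=\sum_{U\subseteq V(G)}\mathrm{det}(-A(G[U]^{wl}))\prod_{v\notin U}x_v
\]
for any digraph $G$. Since $K_n$, $P_n$, and $C_n$ are loopless, the superscript $wl$ can be dropped throughout. Each part then reduces to an explicit case analysis of the spanning directed $1$-factors of the relevant induced subgraphs.

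For (i) I would compute $\mathrm{det}(-A(K_k))$ for each $k=|U|$. Either directly from the spectrum of $A(K_k)=J_k-I_k$ (eigenvalues $k-1$ and $-1$ with multiplicity $k-1$, so $\mathrm{det}(-A(K_k))=-(k-1)=1-k$), or alternatively by expanding Proposition~\ref{DK}, one obtains $\mathrm{det}(-A(K_k))=1-k$. Setting $I=[n]\setminus U$, so that $|U|=n-|I|$, the coefficient of $\prod_{i\in I}x_i$ becomes $1-(n-|I|)=|I|-n+1$, which is exactly the claim.

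For (ii) the identity is immediate: $P_n$ is a tree, so Lemma~\ref{dettree1m} applies verbatim, because $\mathcal{V}_1(P_n)$ is by definition the set of matchings of $P_n$.

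For (iii) I would split the sum in Theorem~\ref{det} according to whether $U$ is proper. If $U\subsetneq V(C_n)$, then removing at least one vertex breaks the cycle so that $C_n[U]$ is a disjoint union of paths, hence a forest; the argument of Lemma~\ref{dettree1m} then shows that $\mathrm{det}(-A(C_n[U]))=(-1)^{|U|/2}$ when $C_n[U]$ admits a (automatically unique) perfect matching $\mu$ with $V(\mu)=U$, and $0$ otherwise. Summing over such $U$ gives exactly the contributions from matchings $\mu$ with $V(\mu)\neq V(C_n)$. The case $U=V(C_n)$ is the one new ingredient: besides any perfect matchings of $C_n$ (viewed as spanning directed $1$-factors consisting of $n/2$ length-$2$ cycles), there are the two genuine orientations of the cycle itself, each a single directed Hamiltonian cycle with $c(C)=1$, contributing $2\cdot(-1)=-2$. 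The perfect-matching contributions here account for precisely the $V(\mu)=V(C_n)$ terms of $\sum_{\mu\in\mathcal{V}_1(C_n)}(-1)^{|\mu|}\prod_{v\notin V(\mu)}x_v$, so combining everything yields the stated formula.

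The only place requiring care is the bookkeeping in (iii): one must not double-count by confusing the two perfect matchings of $C_n$ (present when $n$ is even) with the two cyclic orientations, and must verify the bijection between matchings $\mu$ of $C_n$ (or of any forest) and vertex subsets $U$ such that $G[U]$ has a perfect matching, given by $\mu\leftrightarrow V(\mu)$. Once this bijection and the separation of the two Hamiltonian orientations are in place, the $-2$ correction term falls out immediately.
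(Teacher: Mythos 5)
Your argument is correct, and for parts (i) and (ii) it coincides with the paper's proof: (i) is Theorem~\ref{det} together with $\mathrm{det}(-A(K_k))=1-k$ (the index shift $|U|=n-|I|$ giving the coefficient $|I|-n+1$ is exactly the intended bookkeeping), and (ii) is a verbatim application of Lemma~\ref{dettree1m}. Part (iii), however, is where you genuinely diverge. The paper does not classify the spanning directed $1$-factors of $C_n$ directly; instead it expands $\mathrm{det}(L(C_n,X))=\mathrm{det}(L(P_1^{n},X))-\mathrm{det}(L(P_2^{n-2},X))-2$ (deleting the edge $v_1v_n$, i.e.\ a determinant recursion reducing the cycle to two paths) and then matches the two path determinants, via part (ii), with the partition $\mathcal{V}_1(C_n)=\mathcal{V}_1(C_n\setminus v_1v_n)\sqcup\{\mu: v_1v_n\in\mu\}$. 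Your route applies Theorem~\ref{det} once and for all: for proper $U$ the induced subgraph is a linear forest, so only the (unique) perfect matching of $C_n[U]$ survives, and for $U=V(C_n)$ the spanning directed $1$-factors are the digon-collections (the perfect matchings of $C_n$, when $n$ is even) plus the two Hamiltonian orientations, which supply the $-2$. The points you flag as needing care --- the bijection $\mu\leftrightarrow V(\mu)$ onto subsets inducing a perfectly matchable forest, and keeping the two perfect matchings of an even cycle separate from the two cyclic orientations --- are exactly the right ones, and both check out. Your version is more self-contained (it explains \emph{why} the correction term is $-2$ as a count of Hamiltonian orientations, rather than importing it through the path recursion), at the cost of a slightly longer case analysis; the paper's version is shorter but quietly relies on the unproved determinant identity for the edge deletion.
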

\begin{proof}
(i) Follows by Theorem~\ref{det} and the fact that $\mathrm{det}(A(K_m))=-m+1$ for all $m\geq 2$.
(ii) Follows directly from Lemma~\ref{dettree1m}.
(iii) Let $P_i^l$ be the induced path of $C_n$ of length $l$ that begins in the vertex $i$ of $C_n$.
Since $\mathcal{V}_1(C_n)=\mathcal{V}_1(C_n\setminus v_1v_n) \sqcup \{  \mu \in \mathcal{V}_1(C_n)\, | \, v_1v_n\in \mu\}$,
\[
\displaystyle{\mathrm{det}(L(C_n, X))=\mathrm{det}(L(P_1^n, X))-\mathrm{det}(L(P_2^{n-2}, X))-2
=\sum_{\mu \in \mathcal{V}_1(C_n)} (-1)^{|\mu|}\cdot \prod_{v\not\in V(\mu)}  x_v -2}.\vspace{-7mm}
\]
\end{proof}

\vspace{3mm}

We will show that every minor of a generalized Laplacian of a digraph is equal 
to an evaluation of the determinant of the generalized Laplacian of some digraph.

Given a digraph (or a graph) $D$ and $u\neq v$ two vertices of $D$, let $D(u;v)$
be the digraph obtained from $D$ by deleting the arcs leaving $u$ and entering $v$ 
(remember that each edge of a graph is considered as two arcs in both directions)
and identifying the vertices $u$ and $v$ in a new vertex, denoted by $u\circ v$.
Also, if $u=v$, then $D(u;v)$ is defined as $D\setminus u$.

\begin{Example}\label{path}
If $P_n$ is the path with $n$ vertices (simple graph), then 
\[
P_n(v_1;v_n)\cong C_{n-1}\setminus \{\overset{\longrightarrow}{v_1v_2},\overset{\longrightarrow}{v_{n-1}v_1}\}.
\]
\end{Example}

\medskip

Now, given two matrices $M,N \in M_{m\times m}(\mathcal{P})$, we say that $M$ and $N$ are {\it strongly equivalent}, 
denoted by $N\approx M$, if there exist $P$ and $Q$ permutation matrices such that $N=PMQ$.
Also, given $U=\{u_1,\ldots, u_s\}$ and $V=\{v_1,\ldots, v_s\}$ ordered subsets of $V(D)$ with $s\geq 2$ ,
we define $D(U;V)$ inductively as $D(U\setminus u_s; V\setminus v_s)(u_s;v_s)$.

\begin{Lemma}\label{minors}
If $D$ is a digraph and $U=\{u_1,\ldots, u_s\}$, $V=\{v_1,\ldots, v_s\}$ ordered subsets of $V(D)$, then
\[
L(D, X)(U;V)\approx L(D(U;V), X)_{\{x_{u_i\circ v_i}=-m_{(v_i,u_i)}\, | \, i=1,\ldots,s\}}.
\]
\end{Lemma}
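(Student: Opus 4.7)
The plan is to proceed by induction on $s$, using the base case $s = 1$ as an explicit matrix computation together with the recursive definition $D(U;V) = D(U \setminus u_s; V \setminus v_s)(u_s; v_s)$.

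For the base case with $u_1 = v_1$, the definition gives $D(u_1; v_1) = D \setminus u_1$ with empty substitution, and $L(D, X)(u_1; u_1)$ is literally $L(D \setminus u_1, X)$, so the strong equivalence holds trivially. For $u_1 \neq v_1$, I would construct permutation matrices $P, Q$ that relabel row $v_1$ of $L(D, X)(u_1; v_1)$ as row $u_1 \circ v_1$ and column $u_1$ as column $u_1 \circ v_1$, fixing every other index. One then checks entry by entry that the resulting matrix coincides with $L(D(u_1; v_1), X)$ after the substitution $x_{u_1 \circ v_1} = -m_{(v_1, u_1)}$: the new diagonal entry originates from the $(v_1, u_1)$ position of $L(D, X)$, which is exactly $-m_{(v_1, u_1)}$; the off-diagonal entries at $u_1 \circ v_1$ match because, once arcs leaving $u_1$ and arcs entering $v_1$ are deleted, the arcs leaving (respectively, entering) $u_1 \circ v_1$ in $D(u_1; v_1)$ are precisely those leaving $v_1$ (respectively, entering $u_1$) in $D$; the remaining entries involve only vertices in $V(D) \setminus \{u_1, v_1\}$ and are unchanged.

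For the inductive step, set $D' = D(U \setminus u_s; V \setminus v_s)$. Since $L(D, X)(U; V)$ is obtained from $L(D, X)(U \setminus u_s; V \setminus v_s)$ by further deleting row $u_s$ and column $v_s$, and since permutation matrices compose to permutation matrices, the induction hypothesis gives
\[
L(D, X)(U; V) \approx \bigl(L(D', X)_{\{x_{u_i \circ v_i} = -m_{(v_i, u_i)}\, |\, i < s\}}\bigr)(u_s; v_s).
\]
Applying the base case to $D'$ at the vertices $u_s$ and $v_s$, and invoking the definition $D'(u_s; v_s) = D(U; V)$, yields
\[
L(D, X)(U; V) \approx L(D(U; V), X)_{\{x_{u_i \circ v_i} = -m_{(v_i, u_i)}\, |\, i < s\}\,\cup\,\{x_{u_s \circ v_s} = -m^{D'}_{(v_s, u_s)}\}}.
\]
The lemma follows provided the multiplicity $m^{D'}_{(v_s, u_s)}$ in $D'$ equals $m_{(v_s, u_s)}$ in $D$.

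The main obstacle is precisely this last equality, which amounts to verifying that no arc from $v_s$ to $u_s$ was destroyed during the first $s-1$ identifications. Because the arcs removed in passing from $D_{i-1}$ to $D_i$ are exactly those leaving $u_i$ or entering $v_i$, the equality holds whenever $v_s \notin \{u_1, \ldots, u_{s-1}\}$ and $u_s \notin \{v_1, \ldots, v_{s-1}\}$. In the intended applications of the lemma (for instance, the path and cycle computations hinted at in Example~\ref{path}) the ordered sets $U$ and $V$ are chosen so that no such collision occurs. To accommodate more general configurations one must interpret the symbol $u_i \circ v_i$ consistently under iterated identifications and track the successive arc deletions carefully; this bookkeeping is the only nontrivial part of the argument.
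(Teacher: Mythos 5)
Your argument is correct and is precisely the unwinding of the construction that the paper's one-line proof (``It follows from the construction of $D(U;V)$'') has in mind: an entry-by-entry check for a single identification (row $v_1$ and column $u_1$ becoming the row and column of $u_1\circ v_1$, with the diagonal entry $x_{u_1\circ v_1}=-m_{(v_1,u_1)}$ coming from position $(v_1,u_1)$), followed by induction through the recursive definition of $D(U;V)$. The collision subtlety you flag is genuine but never arises in the paper's applications (singletons, disjoint $U$ and $V$, or $u_i=v_i$ handled by the deletion convention), and in full generality it is resolved by reading $m_{(v_i,u_i)}$ as the multiplicity in the intermediate digraph $D(\{u_1,\ldots,u_{i-1}\};\{v_1,\ldots,v_{i-1}\})$ rather than in $D$, after which your inductive step closes with no further hypothesis.
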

\begin{proof}
It follows from the construction of $D(U;V)$.
\end{proof}

\begin{Remark}
Clearly, $D(U;V)$ and $L(D(U;V),X)$ depend on the order of the elements of the subsets $U$ and $V$.
However, if $\sigma\in S_s$ is a permutation, then
\[
L(D(U;V),X)_{\{x_{u_i\circ v_i}=-m_{(v_i,u_i)}\, |\, i=1,\ldots,s\}} \approx 
L(D(U;\{v_{\sigma(1)},\ldots, v_{\sigma(s)}\}),X)_{\{x_{u_i\circ v_{\sigma(i)}}=-m_{(v_{\sigma(i)},u_i)}\, |\, i=1,\ldots,s\}},
\]
that is, in some sense $L(D(U;V),X)_{\{x_{u_i\circ v_i}=-m_{(v_i,u_i)}\, | \, i=1,\ldots,s\}}$
does not depend on the order of the elements of the subsets $U$ and $V$.
\end{Remark}

\begin{Example}\label{exa2}
Let $D$ be the cycle with six vertices, $U=\{v_1,v_2\}$, and $V=\{v_6,v_5\}$.
Then we get that $D(U;V)$ is the digraph with four vertices $\{v_3,v_4,v_2\circ v_5, v_1\circ v_6\}$ (see Figure~\ref{fig4}(a))
and arcs $\{v_3v_4,v_4v_3, v_3(v_2\circ v_5),(v_1\circ v_6)v_4\}$.
On the other hand, if we change the order of the elements in $U$, then $D(U;V)$ is given by the graph in Figure~\ref{fig4}(b).
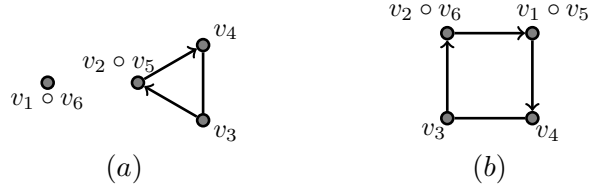
\begin{figure}[h]
\begin{center}
\begin{tabular}{c@{\extracolsep{20mm}}c}
	\begin{tikzpicture}[line width=1pt, scale=1]
		\tikzstyle{every node}=[inner sep=0pt, minimum width=4.5pt] 
		\draw (-1.2,0) node (v16) [draw, circle, fill=gray] {};
		\draw (0,0) node (v25) [draw, circle, fill=gray] {};
		\draw (-30:1) node (v3) [draw, circle, fill=gray] {};
		\draw (30:1) node (v4) [draw, circle, fill=gray] {};
		\draw (-1.2,-0.25) node () {\small $v_1\circ v_6$};
		\draw (-0.25,0.25) node () {\small $v_2\circ v_5$};
		\draw (-30:1.35) node () {\small $v_3$};
		\draw (30:1.35) node () {\small $v_4$};
		\draw (v3) -- (v4); 
		\draw (v3) edge[->] (v25);
		\draw (v25) edge[->] (v4);
	\end{tikzpicture}
&
	\begin{tikzpicture}[line width=1pt, scale=1]
		\tikzstyle{every node}=[inner sep=0pt, minimum width=4.5pt] 
		\draw (45:0.8) node (v15) [draw, circle, fill=gray] {};
		\draw (135:0.8) node (v26) [draw, circle, fill=gray] {};
		\draw (225:0.8) node (v3) [draw, circle, fill=gray] {};
		\draw (315:0.8) node (v4) [draw, circle, fill=gray] {};
		\draw (45:1.20) node () {\small $v_1\circ v_5$};
		\draw (135:1.2) node () {\small $v_2\circ v_6$};
		\draw (225:1.05) node () {\small $v_3$};
		\draw (315:1.09) node () {\small $v_4$};
		\draw (v3) edge[->] (v26);
		\draw (v26) edge[->] (v15);
		\draw (v15) edge[->] (v4);
		\draw (v3) -- (v4); 
	\end{tikzpicture}
\\
$(a)$
&
$(b)$
\end{tabular}		
\end{center}
\caption{Digraphs with four vertices.}
\label{fig4}
\end{figure}

\vspace{-3mm}
It is not difficult to see that 
{\scriptsize
\begin{eqnarray*}
L(D, X)(U;V)
=
\left[\begin{array}{cccc}
0& -1&x_3& -1\\
0& 0&-1& x_4\\
0& 0& 0& -1\\
-1& 0 &0 &0
\end{array}\right]
&\hspace{-5mm} \approx \hspace{-5mm}&
\left[\begin{array}{cccc}
x_3& -1& -1& 0\\
-1& x_4& 0& 0\\
 0& -1 & 0& 0\\
0 &0 &0& -1
\end{array}\right]
=L(D(\{v_1,v_2\};\{v_6,v_5\}), X)_{\{x_{v_2\circ v_5}=0, \, x_{v_1\circ v_6}=-1 \}}.\\
\\
& \hspace{-5mm}\approx \hspace{-5mm}&
\left[\begin{array}{cccc}
x_3& -1& -1& 0\\
-1& x_4& 0& 0\\
 0& 0 & 0& -1\\
0 &-1 &0& 0
\end{array}\right]
=L(D(\{v_2,v_1\};\{v_6,v_5\}), X)_{\{x_{v_2\circ v_6}=0, \, x_{v_1\circ v_5}=0 \}}.
\end{eqnarray*}
}
\end{Example}

\medskip

In a similar way, we can define the digraph $D[U;V]$ that satisfies that 
\[
L(D, X)[U;V]\approx L(D[U;V], X)_{\{x_{u_i\circ v_i}=-m_{(v_i,u_i)}\, | \, i=1,\ldots,s\}}.
\]

\medskip

In the next results, we calculate the invariant $\gamma$ for the path using the previous results on the minors of a generalized Laplacian matrix.
\begin{Corollary}\label{coropath}
Let $P_n$ be the path with $n$ vertices, then $\gamma_{\mathcal{P}}(P_n)=n-1$.
\end{Corollary}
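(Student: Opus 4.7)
My plan is to establish $\gamma_{\mathcal{P}}(P_n)=n-1$ by proving the two matching inequalities, both of which are essentially direct calculations on the tridiagonal matrix $L(P_n,X)$.

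For the upper bound $\gamma_{\mathcal{P}}(P_n)\leq n-1$, it suffices to verify that $I_n(P_n,X)\neq \langle 1\rangle$. Since $I_n(P_n,X)$ is the principal ideal generated by $\det(L(P_n,X))$, I will invoke Corollary~\ref{detkpc}(ii) (or equivalently Lemma~\ref{dettree1m}), which expresses this determinant as
\[
\det(L(P_n,X))=\sum_{\mu\in \mathcal{V}_1(P_n)}(-1)^{|\mu|}\prod_{v\notin V(\mu)}x_v.
\]
The empty matching $\mu=\emptyset$ contributes the monomial $x_1x_2\cdots x_n$ with coefficient $+1$, so the determinant is not a unit in $\mathcal{P}[X]$; hence $I_n(P_n,X)\neq \langle 1\rangle$.

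For the lower bound $\gamma_{\mathcal{P}}(P_n)\geq n-1$, I will exhibit a single $(n-1)$-minor of $L(P_n,X)$ that is a unit in $\mathcal{P}$. Consider the $(n-1)\times(n-1)$ submatrix $M=L(P_n,X)(\{1\};\{n\})$ obtained by deleting the first row and the last column. Writing out its entries, row $k$ of $M$ (for $1\leq k\leq n-2$) equals row $k+1$ of $L(P_n,X)$ restricted to columns $1,\ldots,n-1$, which has a $-1$ in column $k$, an $x_{k+1}$ in column $k+1$, a $-1$ in column $k+2$, and zeros elsewhere; the last row of $M$ has only a single nonzero entry, namely $-1$ in column $n-1$. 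Thus $M$ is upper triangular with diagonal entries all equal to $-1$, so
\[
\det(M)=(-1)^{n-1}\in I_{n-1}(P_n,X),
\]
forcing $I_{n-1}(P_n,X)=\langle 1\rangle$.

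The two bounds combine to yield $\gamma_{\mathcal{P}}(P_n)=n-1$. There is no substantive obstacle here once one notices that deleting the first row and last column of the tridiagonal matrix shifts the subdiagonal onto the diagonal, making the resulting matrix triangular; the rest is bookkeeping. Note that this argument simultaneously shows the bound of Theorem~\ref{bounds} is tight for odd paths (where $\alpha(P_{2n+1})=n+1$ gives $2(|V|-\alpha)=2n$, matching $\gamma_{\mathcal{P}}(P_{2n+1})=2n$), as was claimed in the discussion following Theorem~\ref{bounds}.
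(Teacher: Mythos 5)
Your argument is correct and is essentially the paper's own proof, which simply observes that $\det\bigl(L(P_n,X)(1;n)\bigr)$ is a unit (you make the triangularity explicit and also spell out the routine upper bound via the nonconstant generator of $I_n(P_n,X)$). The only nitpick is that for $k=n-2$ the restricted row has no entry in column $k+2=n$, but this does not affect the triangular structure or the conclusion.
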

\begin{proof}
It follows because ${\rm det}(L(P_n,X)(1;n))=1$.
\end{proof}

\begin{Corollary}
Let $G$ be a simple graph with $P_n$ as an induced graph, then $\gamma_{\mathcal{P}}(G)\geq n-1$.
\end{Corollary}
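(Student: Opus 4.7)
The plan is to combine two ingredients already in the paper: Corollary~\ref{coropath}, which tells us that $\gamma_{\mathcal{P}}(P_n)=n-1$, i.e.\ $I_{n-1}(P_n,X)=\langle 1\rangle$, and the monotonicity statement in Proposition~\ref{BasicProp}, which says that for any induced subdigraph $H$ of $G$ with $m$ vertices, $I_k(H,X)\subseteq I_k(G,X)$ for every $1\le k\le m$.

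First I would note that $G$ is a simple graph containing $P_n$ as an \emph{induced} subgraph, so Proposition~\ref{BasicProp} applies with $H=P_n$ and $m=n$. Specializing to $k=n-1$, this gives the ideal inclusion
\[
I_{n-1}(P_n,X)\ \subseteq\ I_{n-1}(G,X).
\]
Then I would invoke Corollary~\ref{coropath} to replace the left-hand side by $\langle 1\rangle$, obtaining $\langle 1\rangle\subseteq I_{n-1}(G,X)$, hence $I_{n-1}(G,X)=\langle 1\rangle$. By the definition of $\gamma_{\mathcal{P}}$ as the largest index $i$ with $I_i(G,X)=\langle 1\rangle$, this yields $\gamma_{\mathcal{P}}(G)\geq n-1$, as required.

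There is essentially no obstacle here: the statement is a direct corollary of the monotonicity of the critical ideals under taking induced subgraphs, once one knows the value of $\gamma_{\mathcal{P}}$ on the path. The only point worth emphasizing is that the hypothesis ``induced'' is exactly what is needed to apply Proposition~\ref{BasicProp}; if $P_n$ were merely a (non-induced) subgraph of $G$, the inclusion of critical ideals would not follow from the results stated so far, because extra edges of $G$ incident with $V(P_n)$ would alter the corresponding submatrices of $L(G,X)$.
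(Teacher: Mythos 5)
Your argument is correct and is exactly the paper's: the published proof simply cites Proposition~\ref{BasicProp} and Corollary~\ref{coropath}, and you have spelled out the same two-step deduction (the inclusion $I_{n-1}(P_n,X)\subseteq I_{n-1}(G,X)$ for an induced subgraph, combined with $I_{n-1}(P_n,X)=\langle 1\rangle$). No issues.
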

\begin{proof}
It follows directly from Proposition~\ref{BasicProp} and Corollary~\ref{coropath}.
\end{proof}
Moreover, in~\cite{trees} was proved that if $T$ is a tree, then $\gamma_{\mathbb Z}(T)$
is equal to the $2$-matching number of the tree, which is a generalization of an induced path.

A crucial open question discussed in~\cite[Section 4]{lorenzini08} and~\cite[Section 4]{directed} 
states that the critical group is cyclic for almost all simple graphs.
However, after computing the critical ideals of all simple graphs with less or equal to $9$ vertices, 
we conjecture that the only simple graph with $\gamma_{\mathcal{P}}(G)=n-1$ is the path with $n$ vertices.
\begin{Conjecture}
Let $G$ be a simple graph with $n$ vertices and $\mathbb{Z}\subseteq \mathcal{P}$, then $\gamma_{\mathcal{P}}(G)=n-1$ if and only if $G=P_n$.
\end{Conjecture}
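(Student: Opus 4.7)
The ``if'' direction is immediate from Corollary~\ref{coropath}, so the content is the converse. My plan is to proceed by strong induction on $n$, with the base cases $n\leq 3$ verified by inspection using Corollary~\ref{trivialgraph}, Theorem~\ref{CK}, and Proposition~\ref{disjoint}. Fix a simple graph $G$ on $n$ vertices with $\gamma_{\mathcal{P}}(G)=n-1$; the goal is to conclude $G=P_n$.

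First I would show $G$ must be connected. If $G=G_1\sqcup G_2$ with $|V(G_i)|=n_i\geq 1$ on disjoint variable sets, then by Proposition~\ref{disjoint}, $I_{n-1}(G,X)=\langle I_{n_1-1}(G_1,X)\,I_{n_2}(G_2,X),\, I_{n_1}(G_1,X)\,I_{n_2-1}(G_2,X)\rangle$, so every generator carries a factor of $\det L(G_1,X)$ or $\det L(G_2,X)$; since these determinants are non-units of the respective polynomial rings in disjoint variables, a specialisation argument (e.g.\ base-changing to a residue field and observing that the common vanishing locus of $\det L(G_1,X)$ and $\det L(G_2,X)$ is non-empty) shows $1\notin I_{n-1}(G,X)$, a contradiction.

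For connected $G$, choose any vertex $v$. Lemma~\ref{bound1} gives $\gamma_{\mathcal{P}}(G\setminus v)\geq n-3$, while $\gamma_{\mathcal{P}}(G\setminus v)\leq n-2$ because $I_{n-1}(G\setminus v,X)$ is the principal ideal generated by the non-unit $\det L(G\setminus v,X)$. The inductive hypothesis then forces either $\gamma_{\mathcal{P}}(G\setminus v)=n-3$, or $\gamma_{\mathcal{P}}(G\setminus v)=n-2$ and $G\setminus v=P_{n-1}$. A key intermediate claim is that at least one vertex of $G$ lies in the second case: were every deletion to drop $\gamma$ by exactly two, then using Lemma~\ref{minors} to rewrite the $(n-1)$-minors of $L(G,X)$ as determinants of generalized Laplacians of contracted digraphs $G(u;w)$ and expanding via Theorem~\ref{det} should force all of them to share a common non-unit divisor (typically a small prime or a factor of the form $x_u+1$), contradicting $I_{n-1}(G,X)=\langle 1\rangle$.

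Assume therefore $G\setminus v=P_{n-1}=u_1u_2\cdots u_{n-1}$, and examine the neighborhood $N_G(v)\subseteq V(P_{n-1})$. If $N_G(v)=\{u_1\}$ or $\{u_{n-1}\}$ then $G=P_n$ and we are done. Every other possibility---$N_G(v)$ containing an internal vertex of the path, or containing two or more vertices---I would rule out by exhibiting a uniform obstruction: Lemma~\ref{minors} rewrites each $(n-1)$-minor as $\pm\det L(G(u;w),X)$ with a single variable specialised to an integer, and Theorem~\ref{det} together with Corollary~\ref{detkpc}(ii) expands the result as a signed sum over matchings of the two sub-paths created by the contraction; the aim is to extract a common polynomial factor of all such minors. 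The main obstacle lies precisely here: while the case of a single internal neighbor $u_j$ admits a clean divisor of the form $x_{u_j}+1$ coming from the symmetric sub-paths of $P_{n-1}\setminus u_j$, the general multiple-neighbor case requires a delicate cancellation analysis among the spanning directed $1$-factors produced by the various contractions $G(u;w)$, and identifying a uniform obstruction is presumably the reason the statement remains conjectural.
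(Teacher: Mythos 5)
This statement appears in the paper as a \emph{Conjecture}: the authors give no proof, only the report that they verified it computationally for all simple graphs on at most $9$ vertices. So there is no argument of theirs to compare yours against; the only question is whether your proposal settles the problem, and it does not. Two steps are asserted rather than proved, and together they are essentially the whole difficulty. First, your ``key intermediate claim'' --- that if $\gamma_{\mathcal{P}}(G\setminus v)=n-3$ for every vertex $v$, then all $(n-1)$-minors of $L(G,X)$ share a common non-unit divisor --- rests only on the phrase ``should force.'' Nothing in Lemma~\ref{bound1} or its proof (Claim~\ref{claim:idealescriticos}) supplies such a divisor: when $I_{n-3}(G\setminus v,X\setminus x_v)=\langle 1\rangle$ the containment of that claim reads $I_{n-1}(G,X)\subseteq\langle 1\rangle$ and carries no information, so showing $I_{n-1}(G,X)\neq\langle 1\rangle$ in this branch requires a genuinely new idea. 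Second, even granting that some deletion leaves $P_{n-1}$, your elimination of the cases where $v$ has an internal neighbour or two or more neighbours on the path is explicitly left unfinished; you yourself describe the needed cancellation analysis as ``presumably the reason the statement remains conjectural.'' Since $G=P_n$ is exactly the conclusion to be proved, conceding these two points concedes the theorem.

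A further caution on the parts you do argue. The conjecture quantifies over all $\mathcal{P}\supseteq\mathbb{Z}$, and $\gamma_{\mathcal{P}}$ can strictly exceed $\gamma_{\mathbb{Z}}$, so any obstruction you extract from the $(n-1)$-minors must remain a non-unit after base change to an arbitrary such $\mathcal{P}$: a common factor of the form $x_u+1$ works, but a common ``small prime'' does not, since an integer prime may become a unit in $\mathcal{P}$ (e.g.\ $\mathcal{P}=\mathbb{Q}$). Your connectedness argument is sound once one passes to a residue field of $\mathcal{P}$, and the ``if'' direction is indeed Corollary~\ref{coropath}; the induction skeleton is reasonable. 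But as written the proposal leaves the conjecture exactly as open as the paper does.
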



\subsection{The critical ideals of the cycle}
In this subsection, we will calculate the critical ideals of the cycle with $n$ vertices.
Let $C_n$ be the cycle with $n$ vertices and let $V(C)=\{v_1,v_2,\ldots, v_n\}$ be its vertex set. 
To simplify the notation, we consider the vertices of $C_n$ as the classes modulo $n$.
That is, for instance, the $(n+1)$-th vertex of $C_n$ is the vertex $v_1$.
Clearly, $I_n(C_n,X)$ is generated by the determinant of $L(C_n,X)$.
In Corollary~\ref{detkpc}, the determinant of the generalized Laplacian of $C_n$ was calculated.
Therefore, the critical ideals of $C_n$, when $1\leq i\leq n-1$, are the only ones that remain to calculate.
In order to simplify the notation, we will write $\mathrm{det}(G,X)$ instead of $\mathrm{det}(L(G, X))$.

First we will prove that almost all the critical ideals of $C_n$ are trivial, except for $i$ equal to $n-1$ and $n$.
Also, we will give a minimal set of generators for $I_{n-1}(C_n,X)$, and after that, we will give a reduced Gr\"obner basis for it.

\begin{Theorem}\label{mingen}
If $C_n$ is the cycle with $n$ vertices, then $I_i(C_n,X)=\langle 1\rangle$ for all $1\leq i\leq n-2$. 
Moreover, 
\[
F_k=\{\mathrm{det}(C_n\setminus \{v_{k+1},v_k,v_{k-1}\},X)+x_k,\mathrm{det}(C_n\setminus \{v_k,v_{k+1}v_{k+2}\},X)+x_{k+1},
\mathrm{det}(C_n\setminus \{v_k,v_{k+1}\},X)+1\} 
\]
for all $1\leq  k\leq  n$ is a minimal set of generators for $I_{n-1}(C_n,X)$.
\end{Theorem}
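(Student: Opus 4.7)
The plan is to handle the two claims separately. For the triviality of $I_i(C_n,X)$ when $i\leq n-2$, I would combine Corollary~\ref{coropath} with Proposition~\ref{BasicProp}: since $P_{n-1}$ sits inside $C_n$ as an induced subgraph (just delete a single vertex), we have $I_{n-2}(P_{n-1},X)\subseteq I_{n-2}(C_n,X)$, and $\gamma_{\mathcal{P}}(P_{n-1})=n-2$ makes the left-hand side equal to $\langle 1\rangle$. The chain of inclusions in Proposition~\ref{BasicProp} then propagates this to $I_i$ for all $i\leq n-2$.

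For the generating set of $I_{n-1}(C_n,X)$, the first move is to identify each element of $F_k$ with a specific $(n-1)$-minor of $L(C_n,X)$. Lemma~\ref{minors} converts any such minor into the determinant of a generalized Laplacian on a contracted digraph with one variable specialized to $-m_{(v_j,v_i)}$. Small-case computation suggests the identifications
\begin{align*}
g_3 &= \pm\det\bigl(L(C_n,X)(\{v_k\};\{v_{k+1}\})\bigr),\\
g_2 &= \pm\det\bigl(L(C_n,X)(\{v_k\};\{v_{k+2}\})\bigr),\\
g_1 &= \pm\det\bigl(L(C_n,X)(\{v_{k+1}\};\{v_{k-1}\})\bigr).
\end{align*}
For $g_3$ the contracted digraph carries a loop at the merged vertex; the substitution $x_{v_k\circ v_{k+1}}=-1$ dictated by Lemma~\ref{minors} reduces the determinant, via cofactor expansion on the merged row, to $\det(L(P_{n-2},X))+1$. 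For $g_1$ and $g_2$ the merged vertex is non-adjacent in $C_n$, so the substitution is $0$, and an analogous expansion produces $\det(L(P_{n-3},X))+x_k$ or $\det(L(P_{n-3},X))+x_{k+1}$, respectively.

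The heart of the proof is showing that every remaining $(n-1)$-minor lies in $\langle F_k\rangle$. Using the cyclic symmetry of $C_n$, I would fix $k=1$ and stratify the minors $\det(L(C_n,X)(\{v_i\};\{v_j\}))$ by cyclic distance $d=|i-j|\pmod n$. For $d=0$, the tridiagonal recursion $\det(L(P_m,X))=x_v\det(L(P_{m-1},X))-\det(L(P_{m-2},X))$ applied to the path $C_n\setminus v_{k+1}$ delivers the clean identity $\det(L(C_n\setminus v_{k+1},X))=x_k g_3-g_1$, and cyclic shifts of this identity (rewritten in $F_1$ via additional uses of the same recursion together with $g_2$) cover the remaining diagonal minors. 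The cases $d=1$ and $d=2$ correspond to rotated copies of $g_3,g_2,g_1$ and are handled by telescoping along the cycle. For $d\geq 3$ the contracted digraph splits into two path pieces meeting at the merged vertex, and I expect relations of the schematic form $\det(L(C_n,X)(\{v_1\};\{v_{1+d}\}))=\pm\det(L(P_{d-2},X))\,g_2\mp\det(L(P_{d-3},X))\,g_3$, to be verified by induction on $d$ using the path recursion. This step is the main obstacle: the sign and index bookkeeping is delicate.

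Finally, minimality of $F_k$ follows from a degree comparison. The polynomial $g_3$ has total degree $n-2$ while $g_1$ and $g_2$ have degree $n-3$, so $g_3\notin\langle g_1,g_2\rangle$; and the linear terms $x_k$ and $x_{k+1}$ distinguishing $g_1$ from $g_2$ (both absent from $g_3$) force $g_1$ and $g_2$ to be independent modulo $\langle g_3\rangle$, which can be confirmed by evaluating at $X=\mathbf{0}$ or by comparing leading monomials under a graded lexicographic order.
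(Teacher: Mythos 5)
Your outline tracks the paper's own proof closely: triviality of $I_{n-2}(C_n,X)$ via a unimodular $(n-2)$-minor coming from the induced path (the paper exhibits $\det(L(C_n,X)(\{1,n\};\{n-1,n\}))=\pm 1$ directly, which is the same computation underlying Corollary~\ref{coropath}); identification of the three elements of $F_k$ with the minors $q_{k,k+1}$, $q_{k,k+2}$, $q_{k-1,k+1}$ via Lemma~\ref{minors} (this is the paper's Claim~\ref{rel1}, and your sample identity $\det(C_n\setminus v_{k+1},X)=x_k g_3-g_1$ is correct); and a three-term recurrence with path-determinant coefficients, which is exactly the paper's Claim~\ref{rel2}, proved there by induction on $s$ from the tridiagonal recursion $p_{i,s+1}=x_{i+s-1}p_{i,s}-p_{i,s-1}$. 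However, the step you flag as "the main obstacle" is genuinely incomplete as written: your relations only express the minors $\det(L(C_n,X)(\{v_1\};\{v_j\}))$, i.e., those with one deleted index anchored at $v_1$, whereas $I_{n-1}(C_n,X)$ is generated by all $n^2$ minors $q_{i,j}$, including those with $\{i,j\}\cap\{k-1,k,k+1\}=\emptyset$. "Telescoping along the cycle" needs to be made precise; the paper does it by combining the recurrence with the symmetry $q_{i,j}=q_{j,i}$: from $q_{k,k+1},q_{k,k+2}$ one gets every $q_{k,t}$; then $q_{k-1,k+1}\in F_k$ together with $q_{k-1,k}=q_{k,k-1}$ form a consecutive pair anchored at $v_{k-1}$, yielding every $q_{k-1,t}$; and for arbitrary $i$ the three now-known values $q_{i,k-1},q_{i,k},q_{i,k+1}$ contain a consecutive pair, so the recurrence anchored at $v_i$ yields every $q_{i,t}$. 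You need this propagation (or an equivalent) to close the argument.

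The minimality argument is also flawed as stated. From $\deg g_3=n-2>n-3=\deg g_1=\deg g_2$ you cannot conclude $g_3\notin\langle g_1,g_2\rangle$: an ideal generated in degree $n-3$ contains elements of every larger degree. The inference only has a chance in the opposite direction, and even then one must control constant multipliers; moreover the leading monomials here are not pairwise non-divisible (with $\mathrm{lt}(g_1)=x_{k+2}\cdots x_{k-2}$ and $\mathrm{lt}(g_3)=x_{k+2}\cdots x_{k-1}$ one has $\mathrm{lt}(g_1)\mid\mathrm{lt}(g_3)$), so a naive leading-term test does not settle it either. To be fair, the paper's displayed proof also stops at showing $I_{n-1}(C_n,X)=\langle q_{k,k+1},q_{k,k+2},q_{k-1,k+1}\rangle$ without arguing minimality; if you want to supply it, you will need something sharper, e.g.\ the Gr\"obner basis information of Theorems~\ref{Grobnerodd} and~\ref{grobnereven} or a well-chosen evaluation.
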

\begin{proof}
Firstly, we will prove that $I_i(C_n,X)=\langle 1\rangle$ for all  $1\leq i\leq n-2$.
Let $D$ be the digraph obtained from the cycle $C_{n-2}$ with $n-2$ vertices, 
when we delete the arcs $\overset{\longrightarrow}{v_1v_2}$ and $\overset{\longrightarrow}{v_{n-2}v_1}$.
It is not difficult to see that 
\[
L(C_n,X)(\{1,n\};\{n-1,n\})\approx L(D,X)_{\{x_1=0\}}.
\]
By applying Theorem~\ref{det} to the digraph $D$, we have that $|\mathrm{det}(L(C_n,X)(\{1,n\};\{n-1,n\}))|=1$ 
and therefore $I_i(C_n,X)=\langle 1\rangle$ for all $1\leq i\leq n-2$.

\medskip

Now, we turn our attention to the $(n-1)$-th critical ideal of $C_n$.
For all $1\leq i,j\leq n$, let $Q_{i,j}=L(C_n,X)(i;j)$ and 
\[
q_{i,j}=
\begin{cases}
-\mathrm{det}(Q_{i,j}) & \text{ if } {\rm lc} (\mathrm{det}(Q_{i,j}))=-1,\\
\mathrm{det}(Q_{i,j}) & \text{ otherwise.}
\end{cases}
\]
Since $L(C_n,X)$ is symmetric, $Q_{i,j} \approx Q_{j,i}$ and $q_{i,j}=q_{j,i}$.
Clearly, the ideal $I_{n-1}(C_n,X)$ is generated by the $n^2$ minors of size $n-1$ of $L(C_n,X)$.

On the other hand, for all $1\leq i\leq n$ and $0\leq s\leq n-1$, 
let $P_{i}^s$ be the induced path with $s$ vertices of $C_n$ consisting of the vertices $v_i,v_{i+1}, \ldots, v_{i+s-1}$.
For instance, in $C_6$ we have that $P_5^4$ consists of the vertices $v_5,v_6,v_1$, and $v_2$.
Note that, if $s=0$, then $P_{i}^s$ is the empty graph.

\begin{Definition}\label{defp}
For all $1\leq i\leq n$ and $2\leq s\leq n$, let $p_{i,s}={\rm det}(P_i^{s-1},X)$.
For technical convenience we will adopt the convention that $p_{i,1}=1$ 
for all $1\leq i\leq n$ and $p_{i,s}=-p_{i+s,-s}$ for all $s\leq 0$.
\end{Definition}
Note that, by taking $s=0$ we get that $p_{i,-0}=-p_{i,0}$ and therefore $p_{i,0}=0$ for all $1\leq i\leq n$.
Also, taking $s=-1$, we get that $p_{i,-1}=-p_{i-1,1}=-1$. 
Finally, note that ${\rm lt}(p_{i,s})=x_ix_{i+1}\cdots x_{i+s-2}$ for all $s\geq 2$ and ${\rm deg}(p_{i,s})=s-1$ for all $s\geq 1$.
The rest of this article relies heavily in several identities that involved the $q_{i,j}$'s and the $p_{i,s}$'s.
In order to do more understandable the meaning of these identities it will be very helpful to think the polynomial
$q_{i,j}$ as a polynomial associated to the edge $v_iv_j$ (see Figure~\ref{figcycle1}).
The first one of these identities is the following:
\begin{Claim}\label{rel1}
If $1 \leq i\leq j \leq n$, then
\[
q_{i,j}=p_{i+1,j-i}+ p_{j+1,n-j+i}.
\]
\end{Claim}
\begin{proof}
At first, if $j=i$, then clearly $Q_{i,i}=L(C_n,X)(i;i)=L(C_n\setminus v_i,X\setminus x_i)= L(P_{i+1}^{n-1},X)$.
Thus $q_{i,i}=p_{i+1,n}=p_{i+1,0}+p_{i+1,n}$.
Now, if $j=i+1$, then by Lemma~\ref{minors}
\[
Q_{i,i+1}=L(C_n,X)(i;i+1)\approx L(D,X)_{\{x_{v_i\circ v_{i+1}}=-1\}},
\]
where $D$ is the digraph obtained from $C_n$ by deleting the arcs 
$\overset{\longrightarrow}{v_iv_{i-1}}, \overset{\longrightarrow}{v_{i+2}v_{i+1}}$ 
and identifying the vertices $i$ and $i+1$.
Since $D$ has only one spanning directed $1$-factor (a directed cycle) and all the other directed 
$1$-factors of $D$ are $1$-factors of $D\setminus v_i\circ v_{i+1}\cong P_{n-2}$, then
by applying Theorem~\ref{det} to $D$ we get that ${\rm det}(L(D,X))_{\{x_{v_i\circ v_{i+1}}=-1\}}=-p_{i+2,n-1}-1$,
and therefore $q_{i,i+1}=p_{i+1,1}+p_{i+2,n-1}$.
Finally, if $j\neq i, i+1$, then by Lemma~\ref{minors}
\[
L(C_n,X)(i;j)\approx L(D,X)_{\{x_{v_i\circ v_j}=0\}},
\]
where $D$ is the digraph obtained from $C_n$ by deleting the arcs 
$\overset{\longrightarrow}{v_iv_{i+1}}, \overset{\longrightarrow}{v_{j-1}v_j},\overset{\longrightarrow}{v_iv_{i-1}}, \overset{\longrightarrow}{v_{j+1}v_j}$ 
and identifying the vertices $i$ and $j$.
Since $D$ has only two directed cycles containing the vertex $v_i\circ v_j$, then by applying Theorem~\ref{det} we get the result.
\end{proof}

From Claim~\ref{rel1} can be interpreted that $q_{i,j}$ is equal to the sum of the determinants of the generalized 
Laplacian matrices of the two paths obtained when we delete the vertices $v_i$ and $v_j$ to $C_n$.
The next algebraic identity will be key for the rest of the article. 
For instance, it will be useful to find a minimal set of generators for $I_{n-1}(C_n,X)$. 

\begin{Claim}\label{rel2}
If $1\leq i \leq n$, $2\leq j\leq n-1$, and $-j\leq s \leq n-j$, then
\begin{equation}\label{rel2eq}
q_{i,i+j+s}=p_{i+j,s+1}\cdot q_{i,i+j}-p_{i+j+1,s}\cdot q_{i,i+j-1}.
\end{equation}
That is, $\langle  \{q_{i,t}\}_{t=1}^n\rangle= \langle q_{i,i+j}, q_{i,i+j-1} \rangle$.
\end{Claim}
\begin{proof}
Note that by Definition~\ref{defp}, the Equation~\ref{rel2eq} we can be written as 
$q_{i,i+j+s}=-p_{i+j+s+1,-s-1}\cdot q_{i,i+j}+p_{i+j+s+1,-s}\cdot q_{i,i+j-1}$ for all $s\leq -1$.
The result is trivial for $s=0$ and $s=-1$.
We divide the proof in two cases: when $s\geq 0$ and $s\leq -1$.
For both cases we will use induction on $s$.
\begin{figure}[h]
\begin{center}
	\begin{tikzpicture}[line width=1.2pt, scale=1.2]
		\draw[color=blue] (166.8:1.4) arc (166.8:201.6:1.4);
		\draw[color=green] (178.4:1.2) arc (178.4:201.6:1.2);
		\draw[color=green] (100:1.4) arc (100:146.4:1.4);
		\draw[color=blue] (100:1.2) arc (100:134.8:1.2);
		\draw (-90+10:1.3) node (v1) [draw, circle, fill=gray, inner sep=0pt, minimum width=3pt] {};
		\draw (-90+10+11.6:1.3) node (v1_1) [draw, circle, fill=black, inner sep=0pt, minimum width=1pt] {};
		\draw (-90+10+23.2:1.3) node (v1_2) [draw, circle, fill=black, inner sep=0pt, minimum width=1pt] {};
		\draw (-90+10+34.8:1.3) node (v1_3) [draw, circle, fill=black, inner sep=0pt, minimum width=1pt] {};
		
		\draw (-90+56.6:1.3) node (v2) [draw, circle, fill=gray, inner sep=0pt, minimum width=3pt] {};
		\draw (-90+56.6+11.6:1.3) node (v2_1) [draw, circle, fill=black, inner sep=0pt, minimum width=1pt] {};
		\draw (-90+56.6+23.2:1.3) node (v2_2) [draw, circle, fill=black, inner sep=0pt, minimum width=1pt] {};
		\draw (-90+56.6+34.8:1.3) node (v2_3) [draw, circle, fill=black, inner sep=0pt, minimum width=1pt] {};		
		
		\draw (-90+103.2:1.3) node (v3) [draw, circle, fill=gray, inner sep=0pt, minimum width=1pt] {};
		\draw (-90+123.2:1.3) node (v4) [draw, circle, fill=gray, inner sep=0pt, minimum width=1pt] {};
		\draw (-90+123.2+11.6:1.3) node (v4_1) [draw, circle, fill=black, inner sep=0pt, minimum width=1pt] {};
		\draw (-90+123.2+23.2:1.3) node (v4_2) [draw, circle, fill=black, inner sep=0pt, minimum width=1pt] {};
		\draw (-90+123.2+34.8:1.3) node (v4_3) [draw, circle, fill=black, inner sep=0pt, minimum width=1pt] {};		
		
		\draw (90-10:1.3) node (v5) [draw, circle, fill=red, inner sep=0pt, minimum width=3pt] {};
		\draw (90+10:1.3) node (v6) [draw, circle, fill=gray, inner sep=0pt, minimum width=1pt] {};
		\draw (90+10+11.6:1.3) node (v6_1) [draw, circle, fill=black, inner sep=0pt, minimum width=1pt] {};
		\draw (90+10+23.2:1.3) node (v6_2) [draw, circle, fill=black, inner sep=0pt, minimum width=1pt] {};
		\draw (90+10+34.8:1.3) node (v6_3) [draw, circle, fill=black, inner sep=0pt, minimum width=1pt] {};
		\draw (90+10:1.41) node (v6p) [draw, color=green, inner sep=0pt, minimum width=0pt] {};
		\draw (90+10:1.19) node (v6pp) [draw, color=blue, inner sep=0pt, minimum width=0pt] {};
		\draw (90+10+34.8:1.19) node (v6_3pp) [draw, color=blue, inner sep=0pt, minimum width=0pt] {};
		
		\draw (-90-123.2:1.3) node (v7) [draw, circle, fill=green, inner sep=0pt, minimum width=3pt] {};
		\draw (-90-123.2:1.4) node (v7p) [draw, color=green, inner sep=0pt, minimum width=0pt] {};
		\draw (-90-103.2:1.3) node (v8) [draw, circle, fill=blue, inner sep=0pt, minimum width=3pt] {};
		\draw (-90-103.2+11.6:1.3) node (v8_1) [draw, circle, fill=black, inner sep=0pt, minimum width=1pt] {};
		\draw (-90-103.2+23.2:1.3) node (v8_2) [draw, circle, fill=black, inner sep=0pt, minimum width=1pt] {};
		\draw (-90-103.2+34.8:1.3) node (v8_3) [draw, circle, fill=black, inner sep=0pt, minimum width=1pt] {};
		\draw (-90-103.2:1.4) node (v8p) [draw, color=blue, inner sep=0pt, minimum width=0pt] {};
		\draw (-90-103.2+34.8:1.4) node (v8_3p) [draw, color=blue, inner sep=0pt, minimum width=0pt] {};
		\draw (-90-103.2+11.6:1.2) node (v8_1pp) [draw, color=green, inner sep=0pt, minimum width=0pt] {};
		\draw (-90-103.2+34.8:1.2) node (v8_3pp) [draw,color=green, inner sep=0pt, minimum width=0pt] {};
		
		\draw (-90-56.6:1.3) node (v9) [draw, circle, fill=red, inner sep=0pt, minimum width=3pt] {};
		\draw (-90-56.6+11.6:1.3) node (v9_1) [draw, circle, fill=black, inner sep=0pt, minimum width=1pt] {};
		\draw (-90-56.6+23.2:1.3) node (v9_2) [draw, circle, fill=black, inner sep=0pt, minimum width=1pt] {};
		\draw (-90-56.6+34.8:1.3) node (v9_3) [draw, circle, fill=black, inner sep=0pt, minimum width=1pt] {};
		
		\draw (-90-10:1.3) node (v10) [draw, circle, fill=gray, inner sep=0pt, minimum width=3pt] {};
		\draw (v1)+(0.1,-0.25) node () {\small $v_1$};
		\draw (v2)+(0.2,-0.2) node () {\small $v_i$};
		\draw (v5)+(0.85,0.2) node () {\small $v_{i+j+s}\, (s\leq -1)$};
		\draw (v7)+(-0.5,-0.05) node () {\small $v_{i+j-1}$};
		\draw[color=green] (v7)+(1.1,-0.3) node () {\small $q_{i,i+j-1}$};
		\draw (v8)+(-0.38,0.08) node () {\small $v_{i+j}$};
		\draw[color=blue] (v8)+(1.2,-0.75) node () {\small $q_{i,i+j}$};
		\draw[color=blue] (v8)+(-0.65,-0.5) node () {\small $p_{i+j,s+1}$};
		\draw[color=green] (v8)+(0.57,-0.50) node () {\tiny $p_{i+j+1,s}$};
		\draw (v9)+(-0.45,-0.15) node () {\small $v_{i+j+s}$};
		\draw (v10)+(0,-0.25) node () {\small $v_{n}$};
		\draw[color=red] (0,-0.9) node () {\small $q_{i,i+j+s}$};
		\draw[color=green] (-1.5,1.3) node () {\small $p_{i+j+s+1,-s}$};
		\draw[color=blue] (0.1,0.9) node () {\tiny $p_{i+j+s+1,-s-1}$};
		\draw (1.45,0.5) node () {\small $C_n$};
		
		\draw (v1) edge[color=gray] (v1_1);
		\draw (v1_1) edge[color=gray] (v1_2);
		\draw (v1_2) edge[color=gray] (v1_3);
		\draw (v1_3) edge[color=gray] (v2);
		
		\draw (v2) edge[color=gray] (v2_1);
		\draw (v2_1) edge[color=gray] (v2_2);
		\draw (v2_2) edge[color=gray] (v2_3);
		\draw (v2_3) edge[color=gray] (v3);
		
		\draw (v4) edge[color=gray] (v4_1);
		\draw (v4_1) edge[color=gray] (v4_2);
		\draw (v4_2) edge[color=gray] (v4_3);
		\draw (v4_3) edge[color=gray] (v5);
		
		\draw (v6) edge[color=gray] (v6_1);
		\draw (v6_1) edge[color=gray] (v6_2);
		\draw (v6_2) edge[color=gray] (v6_3);
		\draw (v6_3) edge[color=gray] (v7);
		
		\draw (v8) edge[color=gray] (v8_1);
		\draw (v8_1) edge[color=gray] (v8_2);
		\draw (v8_2) edge[color=gray] (v8_3);
		\draw (v8_3) edge[color=gray] (v9);
		
		\draw (v9) edge[color=gray] (v9_1);
		\draw (v9_1) edge[color=gray] (v9_2);
		\draw (v9_2) edge[color=gray] (v9_3);
		\draw (v9_3) edge[color=gray] (v10);
		
		\draw (v1) edge (v10);
		\draw (v2) edge[color=red, dashed] (v5);
		\draw (v2) edge[color=green] (v7);
		\draw (v2) edge[color=blue] (v8);
		\draw (v3) edge[color=gray] (v4);   
		\draw (v5) edge[color=gray] (v6);
		\draw (v7) edge (v8);  
		\draw (v2) edge[color=red, dashed] (v9);
		\draw (v8) edge[color=blue] (v8p);
		\draw (v8_3) edge[color=blue] (v8_3p);
		\draw (v8_1) edge[color=green] (v8_1pp);
		\draw (v8_3) edge[color=green] (v8_3pp);
		\draw (v6) edge[color=blue] (v6pp);
		\draw (v6) edge[color=green] (v6p);
		\draw (v6_3) edge[color=blue] (v6_3pp);
		\draw (v7) edge[color=green] (v7p);
	\end{tikzpicture}	
\end{center}
\caption{ A cycle $C_n$ with $n$ vertices illustrating the identity 
$\textcolor{red}{q_{i,i+j+s}}=-\textcolor{blue}{p_{i+j+s+1,-s-1}\cdot q_{i,i+j}}+\textcolor{green}{p_{i+j+s+1,-s}\cdot q_{i,i+j-1}}$, 
on the $q_{i,j}$'s  and $p_{i,s}$'s polynomials given in claim~\ref{rel2}.}
\label{figcycle1}
\end{figure}
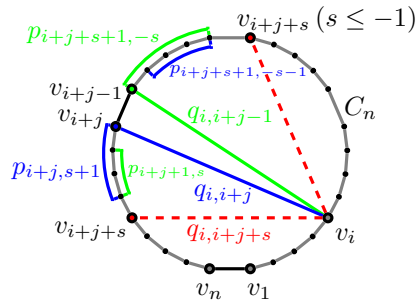

We begin with the case when $s\geq 0$.
If $s=1$, then expanding the determinant of $L(P_{i+1}^{j}, X)$ and $L(P_{i+j}^{n-j}, X)$ we have that
$p_{i+1,j+1}=x_{i+j}\cdot p_{i+1,j}- p_{i+1,j-1}$ and $p_{i+j,n-j+1}=x_{i+j}\cdot p_{i+j+1,n-j}- p_{i+j+2,n-j-1}$.
Therefore
\begin{eqnarray*}
x_{i+j}q_{i,i+j} &\overset{\ref{rel1}}{=}& x_{i+j}\cdot [p_{i+1,j}+ p_{i+j+1,n-j}]
= p_{i+1,j-1}+ p_{i+1,j+1}+p_{i+j,n-j+1}+p_{i+j+2,n-j-1}\\
&=& p_{i+1,j-1}+p_{i+j,n-j+1}+p_{i+1,j+1}+p_{i+j+2,n-j-1}=q_{i,i+j-1}+q_{i,i+j+1}.
\end{eqnarray*}
Now, assume that $s\geq 2$. 
Using induction hypothesis we get
\begin{eqnarray*}
q_{i,i+j+s+1} &=& x_{i+j+s}\cdot q_{i,i+j+s}-q_{i,i+j+s-1}\\
&=& x_{i+j+s}\cdot (p_{i+j,s+1}\cdot q_{i,i+j}-p_{i+j+1,s}\cdot q_{i,i+j-1})- (p_{i+j,s}\cdot q_{i,i+j}-p_{i+j+1,s-1}\cdot q_{i,i+j-1})\\
&=& (x_{i+j+s}\cdot p_{i+j,s+1}-p_{i+j,s})\cdot q_{i,i+j}- (x_{i+j+s}\cdot p_{i+j+1,s}-p_{i+j+1,s-1}) \cdot q_{i,i+j-1}\\
&=& p_{i+j,s+2}\cdot q_{i,i+j}-p_{i+j+1,s+1}\cdot q_{i,i+j-1}.
\end{eqnarray*}

Similar arguments can be used when $s\leq -1$.
\end{proof}

Note that Claim~\ref{rel2} gives us two different expression for $q_{i,i}$.
Namely, $q_{i,i}=p_{i+j,n-j+1}\cdot q_{i,i+j}-p_{i+j+1,n-j}\cdot q_{i,i+j-1}=p_{i+1,j}\cdot q_{i,i+j-1}-p_{i+1,j-1}\cdot q_{i.i+j}$.

Thinking the $q_{i,j}$'s polynomials as a polynomial associated to the edge $v_iv_j$, we have that
Claim~\ref{rel2} says us that the polynomial associated to any edge adjacent to $v_i$
can be expressed as a polynomial combination of the two polynomial associated to two consecutive edges adjacent to $v_i$.
Thus, if we fix $i$ and $j$ (with $i\neq j,j+1$), then for all $1\leq t\leq n$, $q_{i,t}$ can be generated by $q_{i,j}$ and $q_{i,j+1}$.
That is, $\langle  \{q_{i,t}\}_{t=1}^n\rangle= \langle q_{i,j}, q_{i,j+1} \rangle$ for all $1\leq i,j\leq n$ with $i\neq j,j+1$.
Therefore, it is not difficult to see that
\[
I_{n-1}(C_n,X)=\langle q_{i,i+1}, q_{i,i+2},q_{i-1,i+1} \rangle \text{ for all } 1\leq i\leq n. \vspace{-7mm}
\]
\end{proof}

\medskip

Next example presents how claim~\ref{rel2} can be used.
\begin{Example}
For the cycle with $6$ vertices we have that
\begin{multicols}{2}
\begin{enumerate}[\it (i)]
\item $q_{1,1}= x_2 x_3 x_4 x_5 x_6 - x_2 x_3 x_4 - x_2 x_3 x_6 - x_2 x_5 x_6 - x_4 x_5 x_6+x_2 + x_4+ x_6$,
\item $q_{1,2}= x_3 x_4 x_5 x_6 - x_3 x_4 - x_3 x_6 - x_5 x_6+2 $,
\item $q_{1,3}= x_4 x_5 x_6+x_2 - x_4 - x_6 $,
\item $q_{1,4}=  x_2 x_3 +x_5 x_6-2$,
\item $q_{1,5}= x_2 x_3 x_4 -x_2 - x_4+ x_6$,
\item $q_{1,6}= x_2 x_3 x_4 x_5- x_2 x_3 - x_2 x_5 - x_4 x_5 +2$.
\end{enumerate}
\end{multicols}
It is not difficult to see that
\begin{itemize}
\item $q_{1,1}=p_{4,-2}\cdot q_{1,4}-p_{5,-3}\cdot q_{1,3}=-p_{2,2}\cdot q_{1,4}+p_{2,3}\cdot q_{1,3}=-x_2\cdot q_{1,4}+(x_2x_3-1)\cdot q_{1,3}$,
\item $q_{1,2}=p_{4,-1}\cdot q_{1,4}-p_{5,-2}\cdot q_{1,3}=-p_{3,1}\cdot q_{1,4}+p_{3,2}\cdot q_{1,3}=-q_{1,4}+x_3\cdot q_{1,3}$,
\item $q_{1,3}=p_{4,0}\cdot q_{1,4}-p_{5,-1}\cdot q_{1,3}$,
\item $q_{1,4}=p_{4,1}\cdot q_{1,4}-p_{5,0}\cdot q_{1,3}$,
\item $q_{1,5}= p_{4,2}\cdot q_{1,4}-p_{5,1}\cdot q_{1,3}= x_4\cdot q_{1,4}-q_{1,3}$,
\item $q_{1,6}=p_{4,3}\cdot q_{1,4}-p_{5,2}\cdot q_{1,3}=(x_4x_5-1)\cdot q_{1,4}-x_5\cdot q_{1,3}$,
\item $q_{1,1}=p_{4,4}\cdot q_{1,4}-p_{5,3}\cdot q_{1,3}=(x_4x_5x_6-x_4-x_6)\cdot q_{1,4}-(x_5x_6-1)\cdot q_{1,3}$.
\end{itemize}
\end{Example}

\medskip

Before to present a Gr\"obner basis for $I_{n-1}(C_n,X)$, we present an identity between the $q_{i,j}$'s.
These identities will be useful to reduce the $S$-polynomials of the $q_{i,j}$'s.
\begin{Proposition}\label{rel4}
Let $n\geq 6$, $0\leq i,s\leq n$, and $2\leq j,t\leq n-2$. 
\begin{description}
\item[(i)] If $0\leq s\leq j-1$, then (see Figure~\ref{figcycle2} $(i)$)
\[
p_{i+s+1,j-s}\cdot q_{i+j-1, i}-p_{i+s+t,n-s-t+1}\cdot q_{i+s,i+s+t}=p_{i+s+1,j-s-1}\cdot q_{i+j,i}-p_{i+s+t+1,n-s-t}\cdot q_{i+s,i+s+t-1}.
\]
\item[(ii)] If $j\leq s\leq n$, then (see Figure~\ref{figcycle2} $(ii)$)
\[
p_{i+j,s-j+1}\cdot q_{i,i+j}-p_{i+1,s+t-n}\cdot q_{i+s+t-1,i+s}=p_{i+j+1,s-j}\cdot q_{i,i+j-1}-p_{i+1,s+t-n-1}\cdot q_{i+s+t,i+s}.
\]
\end{description}
\end{Proposition}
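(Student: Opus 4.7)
The plan is to show that both sides of each identity equal the single polynomial $q_{i, i+s}$, by applying Claim~\ref{rel2} twice: once with base vertex $i$, and once with base vertex $i+s$. The algebraic engine is that, by Claim~\ref{rel2}, the polynomial $q_{a, a+k}$ can be expanded in terms of any adjacent pair $q_{a, a+m}, q_{a, a+m-1}$; different choices of pivot $m$ must yield the same polynomial, and this is precisely what Proposition~\ref{rel4} records.

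For part (i), I would first use the symmetry $q_{a,b} = q_{b,a}$ to rewrite the statement as
\[
p_{i+s+1, j-s}\, q_{i, i+j-1} - p_{i+s+1, j-s-1}\, q_{i, i+j}
=
p_{i+s+t, n-s-t+1}\, q_{i+s, i+s+t} - p_{i+s+t+1, n-s-t}\, q_{i+s, i+s+t-1},
\]
and then show that each side equals $q_{i, i+s}$. For the left side, apply Claim~\ref{rel2} with base $i$, pivot index $j$, and shift parameter $u = s-j$ (which lies in $[-j, -1]$ under the hypothesis $0 \le s \le j-1$) to get $q_{i, i+s} = p_{i+j, s-j+1}\, q_{i, i+j} - p_{i+j+1, s-j}\, q_{i, i+j-1}$. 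Then invoke the convention $p_{a,b} = -p_{a+b,-b}$ from Definition~\ref{defp} (valid because $s-j+1 \le 0$ and $s-j < 0$) to rewrite $p_{i+j, s-j+1} = -p_{i+s+1, j-s-1}$ and $p_{i+j+1, s-j} = -p_{i+s+1, j-s}$; these sign flips convert Claim~\ref{rel2}'s formula into the left side of the rewritten identity. For the right side, apply Claim~\ref{rel2} directly with base $i+s$, pivot index $t$, and shift $u = n-s-t$, producing $q_{i+s, i+s+t+(n-s-t)} = q_{i+s, i+n}$; the cyclic identification $v_{i+n} = v_i$ together with the symmetry $q_{i+s, i} = q_{i, i+s}$ then finishes the right side.

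Part (ii) is handled by the mirror-image argument, with the hypothesis $j \le s \le n$ swapping the roles of the two brackets: the shift $u = s-j$ on the $i$-side is now non-negative, so Claim~\ref{rel2} directly yields the left bracket equal to $q_{i, i+s}$ without any sign conversion, while on the $i+s$-side the shift $u = n-s-t$ may be non-positive (when $s+t \le n$), requiring Definition~\ref{defp} together with the cyclic identification modulo $n$ to match the coefficients $p_{i+1, s+t-n-1}$ and $p_{i+1, s+t-n}$ appearing in the statement to $-p_{i+s+t, n-s-t+1}$ and $-p_{i+s+t+1, n-s-t}$ respectively.

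The main obstacle is not conceptual but rather the careful sign- and index-bookkeeping: one must chase the convention $p_{a,b} = -p_{a+b,-b}$ and the cyclic reduction of vertex indices modulo $n$ consistently through every coefficient on every side, and verify that the admissibility ranges in Claim~\ref{rel2} are met in each regime. Geometrically the two sides of each identity correspond to two different routes from $v_i$ to $v_{i+s}$ around the cycle via pivot pairs, which is precisely the picture made explicit by the figure referenced in the statement.
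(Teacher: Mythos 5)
Your proposal is correct and follows essentially the same route as the paper: both proofs expand $q_{i,i+s}=q_{i+s,i}$ twice via Claim~\ref{rel2} (once from base $i$ with pivot $j$, once from base $i+s$ with pivot $t$), use the symmetry $q_{a,b}=q_{b,a}$, and invoke the sign convention $p_{a,b}=-p_{a+b,-b}$ of Definition~\ref{defp} to put the coefficients in the stated form. The index and sign bookkeeping you describe checks out in both regimes.
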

\begin{proof}
$(i)$ By using Claim~\ref{rel2} when $0\leq s\leq j-1$, we have that
\[
q_{i,i+s}=p_{i+s+1,j-s}\cdot q_{i,i+j-1}-p_{i+s+1,j-s-1}\cdot q_{i,i+j} 
\]
and 
\[
q_{i+s,i}=p_{i+s+t,n-s-t+1}\cdot q_{i+s,i+s+t}-p_{i+s+t+1,n-s-t}\cdot q_{i+s,i+s+t-1}.
\]
By using the fact that $q_{i,j}=q_{j,i}$ in the first identity, we get that
\[
p_{i+s+1,j-s}\cdot q_{i+j-1, i}-p_{i+s+t,n-s-t+1}\cdot q_{i+s,i+s+t}=p_{i+s+1,j-s-1}\cdot q_{i+j,i}-p_{i+s+t+1,n-s-t}\cdot q_{i+s,i+s+t-1}.
\]
\begin{figure}[h]
\begin{center}
\begin{tabular}{c@{\extracolsep{10mm}}c}
	\begin{tikzpicture}[line width=1.2pt, scale=1.2]
		\draw[color=blue] (45:1.2) arc (45:126.8-11.6:1.2);
		\draw[color=green] (45:1.4) arc (45:126.8:1.4);
		\draw[color=red] (-90-56.6:1.4) arc (-90-56.6:-80+34.8:1.4);
		\draw[color=yellow] (-90-56.6+11.6:1.2) arc (-90-56.6+11.6:-80+34.8:1.2);  
		\draw (-80:1.3) node (v1) [draw, circle, fill=gray, inner sep=0pt, minimum width=3pt] {};
		\draw (-80+11.6:1.3) node (v1_1) [draw, circle, fill=black, inner sep=0pt, minimum width=1pt] {};
		\draw (-80+23.2:1.3) node (v1_2) [draw, circle, fill=black, inner sep=0pt, minimum width=1pt] {};
		\draw (-80+34.8:1.3) node (v1_3) [draw, circle, fill=black, inner sep=0pt, minimum width=1pt] {};
		\draw (-80+34.8:1.4) node (v1_3p) [draw, color=red, inner sep=0pt, minimum width=0pt] {};
		\draw (-80+34.8:1.2) node (v1_3pp) [draw, color=yellow, inner sep=0pt, minimum width=0pt] {};
		
		\draw (-90+56.6:1.3) node (v2) [draw, circle, fill=gray, inner sep=0pt, minimum width=3pt] {};
		\draw (-90+56.6+11.6:1.3) node (v2_1) [draw, circle, fill=black, inner sep=0pt, minimum width=1pt] {};
		\draw (-90+56.6+23.2:1.3) node (v2_2) [draw, circle, fill=black, inner sep=0pt, minimum width=1pt] {};
		\draw (-90+56.6+34.8:1.3) node (v2_3) [draw, circle, fill=black, inner sep=0pt, minimum width=1pt] {};	
		
		\draw (-90+103.2:1.3) node (v3) [draw, circle, fill=black, inner sep=0pt, minimum width=0pt] {};
		\draw (-90+103.2+11.6:1.3) node (v3_1) [draw, circle, fill=black, inner sep=0pt, minimum width=1pt] {};
		\draw (90-56.6:1.3) node (v3_2) [draw, circle, fill=red, inner sep=0pt, minimum width=3pt] {};
		\draw (90-56.6+11.6:1.3) node (v3_3) [draw, circle, fill=black, inner sep=0pt, minimum width=1pt] {};
		\draw (90-56.6+11.6:1.4) node (v3_3p) [draw, color=green, inner sep=0pt, minimum width=0pt] {};
		\draw (90-56.6+11.6:1.2) node (v3_3pp) [draw, color=blue, inner sep=0pt, minimum width=0pt] {};
		\draw (90-56.6+23.2:1.3) node (v3_4) [draw, circle, fill=black, inner sep=0pt, minimum width=1pt] {};		
		
		\draw (90-21.4:1.3) node (v4) [draw, circle, fill=gray, inner sep=0pt, minimum width=1pt] {};
		\draw (90-10:1.3) node (v5) [draw, circle, fill=gray, inner sep=0pt, minimum width=1pt] {};
		\draw (90-10+11.6:1.3) node (v5_1) [draw, circle, fill=black, inner sep=0pt, minimum width=1pt] {};
		\draw (90-10+23.2:1.3) node (v5_2) [draw, circle, fill=black, inner sep=0pt, minimum width=1pt] {};
		\draw (90-10+34.8:1.3) node (v5_3) [draw, circle, fill=black, inner sep=0pt, minimum width=1pt] {};
		\draw (90-10+34.8:1.2) node (v5_3p) [draw, color=blue, inner sep=0pt, minimum width=0pt] {};
		
		\draw (-90-143.2:1.3) node (v6) [draw, circle, fill=green, inner sep=0pt, minimum width=3pt] {};
		\draw (-90-143.2:1.4) node (v6p) [draw, color=green, inner sep=0pt, minimum width=0pt] {};
		\draw (-90-123.2:1.3) node (v7) [draw, circle, fill=blue, inner sep=0pt, minimum width=3pt] {};
		\draw (-90-123.2+11.6:1.3) node (v7_1) [draw, circle, fill=black, inner sep=0pt, minimum width=1pt] {};
		\draw (-90-123.2+23.2:1.3) node (v7_2) [draw, circle, fill=black, inner sep=0pt, minimum width=1pt] {};
		\draw (-90-123.2+34.8:1.3) node (v7_3) [draw, circle, fill=black, inner sep=0pt, minimum width=1pt] {};
		\draw (-90-76.6:1.3) node (v8) [draw, circle, fill=gray, inner sep=0pt, minimum width=3pt] {};
		
		\draw (-90-56.6:1.3) node (v9) [draw, circle, fill=gray, inner sep=0pt, minimum width=3pt] {};
		\draw (-90-56.6:1.4) node (v9p) [draw, color=red, inner sep=0pt, minimum width=0pt] {};
		\draw (-90-56.6+11.6:1.3) node (v9_1) [draw, circle, fill=black, inner sep=0pt, minimum width=1pt] {};
		\draw (-90-56.6+11.6:1.2) node (v9_1pp) [draw, color=yellow, inner sep=0pt, minimum width=0pt] {};
		\draw (-90-56.6+23.2:1.3) node (v9_2) [draw, circle, fill=black, inner sep=0pt, minimum width=1pt] {};
		\draw (-90-56.6+34.8:1.3) node (v9_3) [draw, circle, fill=black, inner sep=0pt, minimum width=1pt] {};
		
		\draw (-90-10:1.3) node (v10) [draw, circle, fill=gray, inner sep=0pt, minimum width=3pt] {};
		\draw (v1)+(0.1,-0.25) node () {\small $v_1$};
		\draw (v10)+(0,-0.25) node () {\small $v_{n}$};
		\draw (v2)+(0.2,-0.2) node () {\small $v_i$};
		\draw (v3_2)+(0.35,0) node () {\small $v_{i+s}$};
		\draw (v9)+(-0.55,0) node () {\small $v_{i+s+t}$};
		\draw (v8)+(-0.6,0) node () {\small $v_{i+s+t-1}$};
		\draw (v6)+(-0.5,0.1) node () {\small $v_{i+j-1}$};
		\draw (v7)+(-0.4,0) node () {\small $v_{i+j}$};
		\draw[color=green] (0,1.55) node () {\small $p_{i+s+1,j-s}$};
		\draw[color=blue] (0.3,0.8) node () {\tiny $p_{i+s+1,j-s-1}$};
		\draw[color=red] (-1.7,-1.3) node () {\small $p_{i+s+t,n-s-t+1}$};
		\draw[color=yellow] (0,-0.75) node () {\tiny $p_{i+s+t+1,n-s-t}$};
		\draw (0.75,0.1) node () {\small $q_{i,i+s}$};
		
		\draw (v1) edge[color=gray] (v1_1);
		\draw (v1_1) edge[color=gray] (v1_2);
		\draw (v1_2) edge[color=gray] (v1_3);
		\draw (v1_3) edge[color=gray] (v2);
		
		\draw (v2) edge[color=gray] (v2_1);
		\draw (v2_1) edge[color=gray] (v2_2);
		\draw (v2_2) edge[color=gray] (v2_3);
		\draw (v2_3) edge[color=gray] (v3);
		
		\draw (v3) edge[color=gray] (v3_1);
		\draw (v3_1) edge[color=gray] (v3_2);
		\draw (v3_2) edge[color=gray] (v3_3);
		\draw (v3_3) edge[color=gray] (v3_4);
		\draw (v3_4) edge[color=gray] (v4);
		
		\draw (v4) edge[color=gray] (v5);
		\draw (v5) edge[color=gray] (v5_1);
		\draw (v5_1) edge[color=gray] (v5_2);
		\draw (v5_2) edge[color=gray] (v5_3);
		\draw (v5_3) edge[color=gray] (v6);
		
		\draw (v6) edge[color=gray] (v7);
		\draw (v7) edge[color=gray] (v7_1);
		\draw (v7_1) edge[color=gray] (v7_2);
		\draw (v7_2) edge[color=gray] (v7_3);
		\draw (v7_3) edge[color=gray] (v8);
		
		\draw (v8) edge[color=gray] (v9);
		\draw (v9) edge[color=gray] (v9_1);
		\draw (v9_1) edge[color=gray] (v9_2);
		\draw (v9_2) edge[color=gray] (v9_3);
		\draw (v9_3) edge[color=gray] (v10);
		\draw (v1) edge (v10);
		
		\draw (v2) edge[color=green] (v6);
		\draw (v2) edge[color=blue] (v7);
		\draw (v2) edge[color=gray, dashed] (v3_2);
		\draw (v3_2) edge[color=red] (v9);
		\draw (v3_2) edge[color=yellow] (v8);
		\draw (v9_1) edge[color=yellow] (v9_1pp);
		\draw (v6) edge[color=green] (v6p);
		\draw (v3_3) edge[color=green] (v3_3p);
		\draw (v1_3) edge[color=yellow] (v1_3pp); 
		\draw (v3_3) edge[color=blue] (v3_3pp);
		\draw (v5_3) edge[color=blue] (v5_3p);
		\draw (v9) edge[color=red] (v9p);
		\draw (v1_3) edge[color=red] (v1_3p); 	 
	\end{tikzpicture}	
&
	\begin{tikzpicture}[line width=1.2pt, scale=1.2]
		\draw[color=blue] (166.8:1.4) arc (166.8:201.6:1.4);
		\draw[color=green] (178.4:1.2) arc (178.4:201.6:1.2);
		\draw[color=red] (-90+56.6+11.6:1.4) arc (-90+56.6+11.6:-90+103.2:1.4);
		\draw[color=yellow] (-90+56.6+11.6:1.2) arc (-90+56.6+11.6:-90+56.6+34.8:1.2);  
		\draw (-90+10:1.3) node (v1) [draw, circle, fill=gray, inner sep=0pt, minimum width=3pt] {};
		\draw (-90+10+11.6:1.3) node (v1_1) [draw, circle, fill=black, inner sep=0pt, minimum width=1pt] {};
		\draw (-90+10+23.2:1.3) node (v1_2) [draw, circle, fill=black, inner sep=0pt, minimum width=1pt] {};
		\draw (-90+10+34.8:1.3) node (v1_3) [draw, circle, fill=black, inner sep=0pt, minimum width=1pt] {};
		
		\draw (-90+56.6:1.3) node (v2) [draw, circle, fill=gray, inner sep=0pt, minimum width=3pt] {};
		\draw (-90+56.6+11.6:1.3) node (v2_1) [draw, circle, fill=black, inner sep=0pt, minimum width=1pt] {};
		\draw (-90+56.6+11.6:1.4) node (v2_1p) [draw, color=red, inner sep=0pt, minimum width=0pt] {};
		\draw (-90+56.6+11.6:1.2) node (v2_1pp) [draw, color=yellow, inner sep=0pt, minimum width=0pt] {};
		\draw (-90+56.6+23.2:1.3) node (v2_2) [draw, circle, fill=black, inner sep=0pt, minimum width=1pt] {};
		\draw (-90+56.6+34.8:1.3) node (v2_3) [draw, circle, fill=black, inner sep=0pt, minimum width=1pt] {};
		\draw (-90+56.6+34.8:1.2) node (v2_3pp) [draw, color=yellow, inner sep=0pt, minimum width=0pt] {};		
		
		\draw (-90+103.2:1.3) node (v3) [draw, circle, fill=red, inner sep=0pt, minimum width=3pt] {};
		\draw (-90+103.2:1.4) node (v3p) [draw, color=red, inner sep=0pt, minimum width=0pt] {};
		\draw (-90+123.2:1.3) node (v4) [draw, circle, fill=yellow, inner sep=0pt, minimum width=3pt] {};
		\draw (-90+123.2+11.6:1.3) node (v4_1) [draw, circle, fill=black, inner sep=0pt, minimum width=1pt] {};
		\draw (-90+123.2+23.2:1.3) node (v4_2) [draw, circle, fill=black, inner sep=0pt, minimum width=1pt] {};
		\draw (-90+123.2+34.8:1.3) node (v4_3) [draw, circle, fill=black, inner sep=0pt, minimum width=1pt] {};		
		
		\draw (90-10:1.3) node (v5) [draw, circle, fill=gray, inner sep=0pt, minimum width=1pt] {};
		\draw (90+10:1.3) node (v6) [draw, circle, fill=gray, inner sep=0pt, minimum width=1pt] {};
		\draw (90+10+11.6:1.3) node (v6_1) [draw, circle, fill=black, inner sep=0pt, minimum width=1pt] {};
		\draw (90+10+23.2:1.3) node (v6_2) [draw, circle, fill=black, inner sep=0pt, minimum width=1pt] {};
		\draw (90+10+34.8:1.3) node (v6_3) [draw, circle, fill=black, inner sep=0pt, minimum width=1pt] {};
		
		\draw (-90-123.2:1.3) node (v7) [draw, circle, fill=green, inner sep=0pt, minimum width=3pt] {};
		\draw (-90-103.2:1.3) node (v8) [draw, circle, fill=blue, inner sep=0pt, minimum width=3pt] {};
		\draw (-90-103.2+11.6:1.3) node (v8_1) [draw, circle, fill=black, inner sep=0pt, minimum width=1pt] {};
		\draw (-90-103.2+23.2:1.3) node (v8_2) [draw, circle, fill=black, inner sep=0pt, minimum width=1pt] {};
		\draw (-90-103.2+34.8:1.3) node (v8_3) [draw, circle, fill=black, inner sep=0pt, minimum width=1pt] {};
		\draw (-90-103.2:1.4) node (v8p) [draw, color=blue, inner sep=0pt, minimum width=0pt] {};
		\draw (-90-103.2+34.8:1.4) node (v8_3p) [draw, color=blue, inner sep=0pt, minimum width=0pt] {};
		\draw (-90-103.2+11.6:1.2) node (v8_1pp) [draw, color=green, inner sep=0pt, minimum width=0pt] {};
		\draw (-90-103.2+34.8:1.2) node (v8_3pp) [draw,color=green, inner sep=0pt, minimum width=0pt] {};
		
		\draw (-90-56.6:1.3) node (v9) [draw, circle, fill=gray, inner sep=0pt, minimum width=3pt] {};
		\draw (-90-56.6+11.6:1.3) node (v9_1) [draw, circle, fill=black, inner sep=0pt, minimum width=1pt] {};
		\draw (-90-56.6+23.2:1.3) node (v9_2) [draw, circle, fill=black, inner sep=0pt, minimum width=1pt] {};
		\draw (-90-56.6+34.8:1.3) node (v9_3) [draw, circle, fill=black, inner sep=0pt, minimum width=1pt] {};
		
		\draw (-90-10:1.3) node (v10) [draw, circle, fill=gray, inner sep=0pt, minimum width=3pt] {};
		\draw (v1)+(0.1,-0.25) node () {\small $v_1$};
		\draw (v2)+(0.2,-0.2) node () {\small $v_i$};
		\draw (v3)+(0.67,0) node () {\small $v_{i+s+t-1}$};
		\draw (v4)+(0.49,0) node () {\small $v_{i+s+t}$};
		\draw (v7)+(-0.5,0.1) node () {\small $v_{i+j-1}$};
		\draw[color=green] (v7)+(1.1,-0.3) node () {\small $q_{i,i+j-1}$};
		\draw[color=yellow] (v7)+(1.4,0) node () {\small $q_{i+s,i+s+t}$};
		\draw (v8)+(-0.4,0) node () {\small $v_{i+j}$};
		\draw[color=blue] (v8)+(-0.85,-0.5) node () {\small $p_{i+j,s-j+1}$};
		\draw[color=red] (v8)+(3.4,-0.5) node () {\small $p_{i+1,s+t-n}$};
		\draw[color=blue] (v8)+(0.5,-0.47) node () {\small $q_{i,i+j}$};
		\draw (v9)+(-0.25,-0.15) node () {\small $v_{i+s}$};
		\draw (v10)+(0,-0.25) node () {\small $v_{n}$};
		\draw (0,-0.9) node () {\small $q_{i,i+s}$};
		
		\draw (v1) edge[color=gray] (v1_1);
		\draw (v1_1) edge[color=gray] (v1_2);
		\draw (v1_2) edge[color=gray] (v1_3);
		\draw (v1_3) edge[color=gray] (v2);
		
		\draw (v2) edge[color=gray] (v2_1);
		\draw (v2_1) edge[color=gray] (v2_2);
		\draw (v2_2) edge[color=gray] (v2_3);
		\draw (v2_3) edge[color=gray] (v3);
		
		\draw (v4) edge[color=gray] (v4_1);
		\draw (v4_1) edge[color=gray] (v4_2);
		\draw (v4_2) edge[color=gray] (v4_3);
		\draw (v4_3) edge[color=gray] (v5);
		
		\draw (v6) edge[color=gray] (v6_1);
		\draw (v6_1) edge[color=gray] (v6_2);
		\draw (v6_2) edge[color=gray] (v6_3);
		\draw (v6_3) edge[color=gray] (v7);
		
		\draw (v8) edge[color=gray] (v8_1);
		\draw (v8_1) edge[color=gray] (v8_2);
		\draw (v8_2) edge[color=gray] (v8_3);
		\draw (v8_3) edge[color=gray] (v9);
		
		\draw (v9) edge[color=gray] (v9_1);
		\draw (v9_1) edge[color=gray] (v9_2);
		\draw (v9_2) edge[color=gray] (v9_3);
		\draw (v9_3) edge[color=gray] (v10);
		
		\draw (v1) edge (v10);
		\draw (v2) edge[color=green] (v7);
		\draw (v2) edge[color=blue] (v8);
		\draw (v3) edge (v4);   
		\draw (v5) edge[color=gray] (v6);
		\draw (v7) edge (v8);  
		\draw (v2) edge[color=gray, dashed] (v9);
		\draw (v3) edge[color=red] (v9);
		\draw (v4) edge[color=yellow] (v9);
		\draw (v3) edge[color=red] (v3p);
		\draw (v2_1) edge[color=red] (v2_1p);
		\draw (v2_3) edge[color=yellow] (v2_3pp);
		\draw (v2_1) edge[color=yellow] (v2_1pp); 
		\draw (v8) edge[color=blue] (v8p);
		\draw (v8_3) edge[color=blue] (v8_3p);
		\draw (v8_1) edge[color=green] (v8_1pp);
		\draw (v8_3) edge[color=green] (v8_3pp); 	 
	\end{tikzpicture}
\\
$(i)$ $0\leq s\leq j-1$,
&
$(ii)$ $j\leq s\leq n$.		
\end{tabular}	
\end{center}
\caption{ A cycle with $n$ vertices illustrating the identity 
given in Proposition~\ref{rel4}.}
\label{figcycle2}
\end{figure}
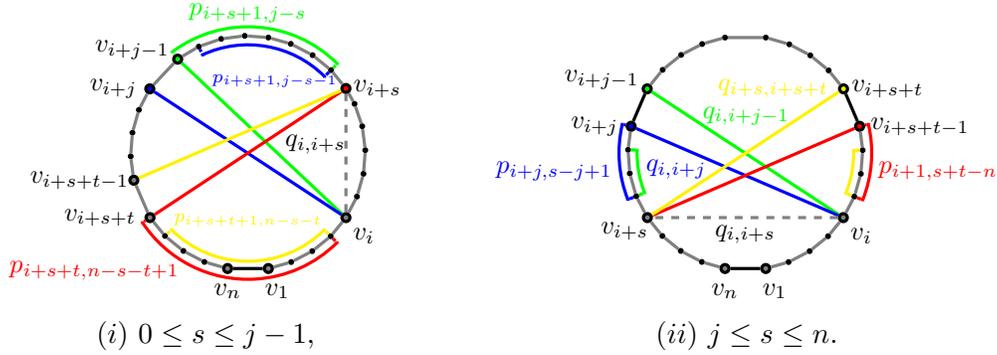

$(ii)$ In a similar way, if $j\leq s\leq n$, then by Claim~\ref{rel2}
\[
q_{i,i+s}=p_{i+j,s-j+1}\cdot q_{i,i+j}-p_{i+j+1,s-j}\cdot q_{i,i+j-1} 
\]
and 
\[
q_{i+s,i}=p_{i+1,s+t-n}\cdot q_{i+s,i+s+t-1}-p_{i+1,s+t-n-1}\cdot q_{i+s,i+s+t}
\]
By using the fact that $q_{i,j}=q_{j,i}$ in the second identity we get that
\[
p_{i+j,s-j+1}\cdot q_{i,i+j}-p_{i+1,s+t-n}\cdot q_{i+s+t-1,i+s}=p_{i+j+1,s-j}\cdot q_{i,i+j-1}-p_{i+1,s+t-n-1}\cdot q_{i+s+t,i+s}.
\]
\end{proof}

\begin{Remark}
Note that the two identities in Proposition~\ref{rel4} are equivalents in the sense that only differ by a rotation on the labels of the vertices.
\end{Remark}

Now, we are ready to find a Gr\"obner basis for $I_{n-1}(C_n,X)$.
In general, Gr\"obner basis is very useful, see for instance~\cite{cox}.
For instance, almost all the information about an ideal can be extracted from its Gr\"obner basis.
We divide this into odd and even cases.

\begin{Theorem}\label{Grobnerodd}
Let $n=2m+1\geq 7$ and
\[
B_1=\{q_{i,i+m+1}\, | \, i=1,\ldots, n\} =\{q_{i,i+m}\, | \, i=1,\ldots, n\}.
\]
Then $B_1$ is a reduced Gr\"obner basis for $I_{n-1}(C_n,X)$ with respect to any graded lexicographic order.
\end{Theorem}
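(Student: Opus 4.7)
The plan has four parts: verify the set equality, show $B_1$ generates $I_{n-1}(C_n, X)$, check reducedness, and verify the Gr\"obner basis condition via Lemma~\ref{GBT}.

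The equality $\{q_{i, i+m+1}\}_{i=1}^{n} = \{q_{i, i+m}\}_{i=1}^{n}$ follows from $q_{a,b} = q_{b,a}$ together with $n = 2m+1$: we have $q_{i, i+m+1} = q_{i+m+1, i} = q_{i+m+1, (i+m+1)+m}$, so the reindexing $i \mapsto i+m+1 \pmod n$ is a bijection between the two descriptions. For generation, apply Claim~\ref{rel2} with $j = m+1$: for each fixed $i$, one has $\langle q_{i, t} : t \in [n]\rangle = \langle q_{i, i+m}, q_{i, i+m+1}\rangle$, and both generators lie in $B_1$. Since $I_{n-1}(C_n, X)$ is generated by all the $q_{i,j}$'s (being the ideal of $(n-1)$-minors), this yields $\langle B_1\rangle = I_{n-1}(C_n, X)$.

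For leading terms and reducedness, Claim~\ref{rel1} gives $q_{i, i+m} = p_{i+1, m} + p_{i+m+1, m+1}$, where the first summand has degree $m-1$ and the second has degree $m$ with leading monomial $X_i := x_{i+m+1} x_{i+m+2} \cdots x_{i+2m}$ (indices mod $n$). In any graded lexicographic order this gives $\mathrm{lt}(q_{i, i+m}) = X_i$ with leading coefficient $+1$. The $X_i$'s are the $n$ distinct products of $m$ cyclically consecutive variables, so reducedness is immediate: every non-leading term of $q_{i, i+m}$ has degree at most $m-1$ and hence cannot be divisible by any degree-$m$ monomial $X_j$, while the leading term $X_i$ is itself not divisible by $X_j$ when $j \neq i$.

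To verify the Gr\"obner basis condition, cyclic symmetry and the antisymmetry $S(f,g) = -S(g,f)$ reduce us to checking $S(q_{1, m+1}, q_{1+d, 1+d+m}) \to_{B_1} 0$ for $d \in \{1, \ldots, m\}$. When $d = m$ the leading monomials $X_1$ and $X_{m+1}$ are coprime (they partition a $2m$-arc of the cycle), so Buchberger's gcd criterion closes the case. For $d \in \{1, \ldots, m-1\}$ the key input is Proposition~\ref{rel4}(ii) applied with $j = t = m+1$ and $s = m+1+d$: after reducing indices modulo $n$ and rewriting each appearing $q$ in canonical form $q_{a, a+m}$ via symmetry, the resulting algebraic identity involves four elements of $B_1$, and its highest-degree contribution on one side matches exactly the $S$-polynomial under consideration. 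Rearranging then expresses the $S$-polynomial as a combination of $B_1$-elements whose individual summands have leading monomials strictly smaller than $\mathrm{lt}(S)$, yielding the required reduction $S \to_{B_1} 0$. The hardest part will be this last verification: for each $d$ one must pin down the four $q$'s produced by Proposition~\ref{rel4}(ii) together with their canonical $B_1$ representatives (tracking wrap-arounds modulo $n = 2m+1$), and perform a degree count from the explicit leading monomials of the $p_{i,s}$'s to confirm the strict decrease of leading terms in every summand.
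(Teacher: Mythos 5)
Your plan follows the paper's proof almost step for step: generation via Claim~\ref{rel2} with $j=m+1$, the leading-term and reducedness analysis via Claim~\ref{rel1} (each $q_{i,i+m}$ has a unique top-degree term, a product of $m$ cyclically consecutive variables, so its leading term is order-independent among graded orders and no non-leading term can be divisible by a degree-$m$ monomial), and the rewriting of each $S$-polynomial through Proposition~\ref{rel4}(ii) with $j=t=m+1$, which is exactly the identity the paper uses and does indeed produce a combination of four elements of $B_1$. Your two genuine streamlinings — using cyclic symmetry to restrict to $i=1$ and $d\le m$ (legitimate, precisely because the leading terms are the unique top-degree terms and hence rotate with the indices even though the order does not), and dispatching the antipodal pair $d=m$ by the coprime-leading-monomial criterion (valid here since all $q_{i,j}$ are monic) — are fine and slightly cleaner than the paper, which simply runs its computation for all $1\le r\le n-1$.

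The one genuine problem is the success criterion you propose for the final verification. You claim the rearranged identity writes $S$ as a sum of four terms $h_kb_k$ with $b_k\in B_1$ ``whose individual summands have leading monomials strictly smaller than $\mathrm{lt}(S)$,'' to be confirmed ``by a degree count.'' This cannot be what happens: if $S=\sum_k h_kb_k$ then $\max_k \mathrm{lt}(h_kb_k)\succeq \mathrm{lt}(S)$, so at least one summand must reach $\mathrm{lt}(S)$. Concretely, with $j=t=m+1$ and $s=m+1+r$ one gets
\[
S=-\mathrm{red}(p_{i+m+1,r+1})\,q_{i,i+m+1}+\mathrm{red}(p_{i+1,r+1})\,q_{i+r,i+m+r+1}+p_{i+m+2,r}\,q_{i,i+m}-p_{i+1,r}\,q_{i+r+1,i+m+r+1},
\]
where the last two summands have leading terms of degree $m+r-1$, which is exactly $\deg(\mathrm{lt}(S))$; a degree count distinguishes neither these two from each other nor from $\mathrm{lt}(S)$. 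The condition actually needed (and the one the paper checks, by writing out the four leading monomials explicitly) is that the leading terms $\mathrm{lt}(h_kb_k)$ of the four summands are \emph{pairwise distinct monomials}, so that the characterization of $\rightarrow_{B_1}$ preceding Lemma~\ref{GBT} applies; distinctness of the two degree-$(m+r-1)$ monomials comes from identifying which variables appear squared in each, not from their degrees. So the architecture of your proof is right, but as stated the last verification step would fail and must be replaced by this pairwise-distinctness check.
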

\begin{proof}
First, since $q_{i,i+m}$ and $q_{i,i+m+1}$ are in $B_1$ for all $1\leq i\leq n$, then it is not difficult to see that $I_{n-1}(C_n,X)=\langle B_1\rangle$.
On the other hand, by Claim~\ref{rel1}, ${\rm lt}(q_{i,i+m+1})={\rm lt}(p_{i+1,m+1})=x_{i+1}\cdots x_{i+m}$.
Thus ${\rm deg}(q_{i,i+m+1})=m$ and ${\rm lt}(q_{i,i+m+1}) \nmid \,{\rm lt}(q_{i',i'+m+1})$ for all $1\leq i<i'\leq n$.
Moreover, ${\rm deg}(q_{i,i+m+1}-{\rm lt}(q_{i,i+m+1}) )=m-1$ for all $1\leq i \leq n$.
Therefore, if $B_1$ is a Gr\"obner basis for $I_{n-1}(C_n,X)$, then it is reduced.

In order to prove that $B_1$ is a Gr\"obner basis for $I_{n-1}(C_n,X)$, we need to show that all the $S$-polynomials of the elements
on $B_1$ can be reduced to $0$ by elements on $B_1$.
Let $1\leq i\leq n$ and $1\leq r\leq n-1$, it is not difficult to see (Figure~\ref{figcycle3}) that
{\footnotesize 
\begin{eqnarray*}
S(q_{i,i+m+1},q_{i+r,i+m+r+1})&=& 
\begin{cases}
(x_{i+m+1}\cdots x_{i+m+r})\cdot q_{i,i+m+1}-(x_{i+1}\cdots x_{i+r})\cdot q_{i+r,i+m+r+1} &\text{ if } 1\leq r\leq m,\\
(x_{i+r+1}\cdots x_{i+n})\cdot q_{i,i+m+1}-(x_{i+r+m+1}\cdots x_{i+m})\cdot q_{i+r,i+m+r+1} &\text{ if } m+1\le r \leq 2m,\\
\end{cases}
\\
&=& 
\begin{cases}
{\rm lt}(p_{i+m+1,r+1})\cdot q_{i,i+m+1}-{\rm lt}(p_{i+1,r+1})\cdot q_{i+r,i+m+r+1} &\text{ if } 1\leq r\leq m,\\
{\rm lt}(p_{i+r+1,n-r+1})\cdot q_{i,i+m+1}-{\rm lt}(p_{i+r+m+1,n-r+1})\cdot q_{i+r,i+m+r+1} & \text{ if } m+1\le r \leq 2m.
\end{cases}
\end{eqnarray*}
}
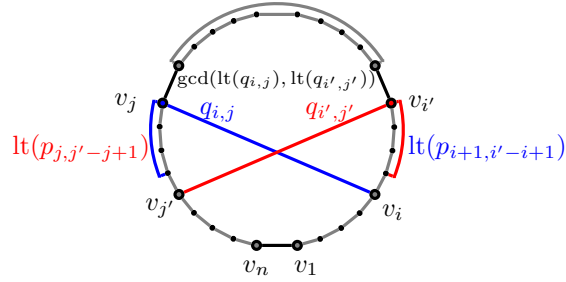
\begin{figure}[h]
\begin{center}
	\begin{tikzpicture}[line width=1.2pt, scale=1.2]
		\draw[color=blue] (166.8:1.4) arc (166.8:201.6:1.4);
		\draw[color=red] (-90+56.6+11.6:1.4) arc (-90+56.6+11.6:-90+103.2:1.4);
		\draw[color=gray] (-90+123.2:1.4) arc (-90+123.2:146.8:1.4);  
		\draw (-90+10:1.3) node (v1) [draw, circle, fill=gray, inner sep=0pt, minimum width=3pt] {};
		\draw (-90+10+11.6:1.3) node (v1_1) [draw, circle, fill=black, inner sep=0pt, minimum width=1pt] {};
		\draw (-90+10+23.2:1.3) node (v1_2) [draw, circle, fill=black, inner sep=0pt, minimum width=1pt] {};
		\draw (-90+10+34.8:1.3) node (v1_3) [draw, circle, fill=black, inner sep=0pt, minimum width=1pt] {};
		
		\draw (-90+56.6:1.3) node (v2) [draw, circle, fill=gray, inner sep=0pt, minimum width=3pt] {};
		\draw (-90+56.6+11.6:1.3) node (v2_1) [draw, circle, fill=black, inner sep=0pt, minimum width=1pt] {};
		\draw (-90+56.6+11.6:1.4) node (v2_1p) [draw, color=red, inner sep=0pt, minimum width=0pt] {};
		\draw (-90+56.6+23.2:1.3) node (v2_2) [draw, circle, fill=black, inner sep=0pt, minimum width=1pt] {};
		\draw (-90+56.6+34.8:1.3) node (v2_3) [draw, circle, fill=black, inner sep=0pt, minimum width=1pt] {};		
		
		\draw (-90+103.2:1.3) node (v3) [draw, circle, fill=red, inner sep=0pt, minimum width=3pt] {};
		\draw (-90+103.2:1.4) node (v3p) [draw, color=red, inner sep=0pt, minimum width=0pt] {};
		\draw (-90+123.2:1.3) node (v4) [draw, circle, fill=gray, inner sep=0pt, minimum width=3pt] {};
		\draw (-90+123.2:1.4) node (v4p) [draw, color=gray, inner sep=0pt, minimum width=0pt] {};
		\draw (-90+123.2+11.6:1.3) node (v4_1) [draw, circle, fill=black, inner sep=0pt, minimum width=1pt] {};
		\draw (-90+123.2+23.2:1.3) node (v4_2) [draw, circle, fill=black, inner sep=0pt, minimum width=1pt] {};
		\draw (-90+123.2+34.8:1.3) node (v4_3) [draw, circle, fill=black, inner sep=0pt, minimum width=1pt] {};		
		
		\draw (90-10:1.3) node (v5) [draw, circle, fill=gray, inner sep=0pt, minimum width=1pt] {};
		\draw (90+10:1.3) node (v6) [draw, circle, fill=gray, inner sep=0pt, minimum width=1pt] {};
		\draw (90+10+11.6:1.3) node (v6_1) [draw, circle, fill=black, inner sep=0pt, minimum width=1pt] {};
		\draw (90+10+23.2:1.3) node (v6_2) [draw, circle, fill=black, inner sep=0pt, minimum width=1pt] {};
		\draw (90+10+34.8:1.3) node (v6_3) [draw, circle, fill=black, inner sep=0pt, minimum width=1pt] {};
		
		\draw (-90-123.2:1.3) node (v7) [draw, circle, fill=gray, inner sep=0pt, minimum width=3pt] {};
		\draw (-90-123.2:1.4) node (v7p) [draw, color=gray, inner sep=0pt, minimum width=0pt] {};
		\draw (-90-103.2:1.3) node (v8) [draw, circle, fill=blue, inner sep=0pt, minimum width=3pt] {};
		\draw (-90-103.2+11.6:1.3) node (v8_1) [draw, circle, fill=black, inner sep=0pt, minimum width=1pt] {};
		\draw (-90-103.2+23.2:1.3) node (v8_2) [draw, circle, fill=black, inner sep=0pt, minimum width=1pt] {};
		\draw (-90-103.2+34.8:1.3) node (v8_3) [draw, circle, fill=black, inner sep=0pt, minimum width=1pt] {};
		\draw (-90-103.2:1.4) node (v8p) [draw, color=blue, inner sep=0pt, minimum width=0pt] {};
		\draw (-90-103.2+34.8:1.4) node (v8_3p) [draw, color=blue, inner sep=0pt, minimum width=0pt] {};
		
		\draw (-90-56.6:1.3) node (v9) [draw, circle, fill=gray, inner sep=0pt, minimum width=3pt] {};
		\draw (-90-56.6+11.6:1.3) node (v9_1) [draw, circle, fill=black, inner sep=0pt, minimum width=1pt] {};
		\draw (-90-56.6+23.2:1.3) node (v9_2) [draw, circle, fill=black, inner sep=0pt, minimum width=1pt] {};
		\draw (-90-56.6+34.8:1.3) node (v9_3) [draw, circle, fill=black, inner sep=0pt, minimum width=1pt] {};
		
		\draw (-90-10:1.3) node (v10) [draw, circle, fill=gray, inner sep=0pt, minimum width=3pt] {};
		\draw (v1)+(0.1,-0.25) node () {\small $v_1$};
		\draw (v2)+(0.2,-0.2) node () {\small $v_i$};
		\draw (v3)+(0.35,0) node () {\small $v_{i'}$};
		\draw (v8)+(-0.4,0) node () {\small $v_{j}$};
		\draw[color=red] (v8)+(-0.9,-0.5) node () {\small ${\rm lt}(p_{j,j'-j+1})$};
		\draw[color=blue] (v8)+(3.6,-0.5) node () {\small ${\rm lt}(p_{i+1,i'-i+1})$};
		\draw[color=blue] (-0.65,0.2) node () {\small $q_{i,j}$};
		\draw[color=red] (0.6,0.2) node () {\small $q_{i',j'}$};
		\draw (0,0.55) node () {\tiny ${\rm gcd}({\rm lt}(q_{i,j}),{\rm lt}(q_{i',j'}))$};
		\draw (v9)+(-0.2,-0.15) node () {\small $v_{j'}$};
		\draw (v10)+(0,-0.25) node () {\small $v_{n}$};
		
		\draw (v1) edge[color=gray] (v1_1);
		\draw (v1_1) edge[color=gray] (v1_2);
		\draw (v1_2) edge[color=gray] (v1_3);
		\draw (v1_3) edge[color=gray] (v2);
		
		\draw (v2) edge[color=gray] (v2_1);
		\draw (v2_1) edge[color=gray] (v2_2);
		\draw (v2_2) edge[color=gray] (v2_3);
		\draw (v2_3) edge[color=gray] (v3);
		
		\draw (v4) edge[color=gray] (v4_1);
		\draw (v4_1) edge[color=gray] (v4_2);
		\draw (v4_2) edge[color=gray] (v4_3);
		\draw (v4_3) edge[color=gray] (v5);
		
		\draw (v6) edge[color=gray] (v6_1);
		\draw (v6_1) edge[color=gray] (v6_2);
		\draw (v6_2) edge[color=gray] (v6_3);
		\draw (v6_3) edge[color=gray] (v7);
		
		\draw (v8) edge[color=gray] (v8_1);
		\draw (v8_1) edge[color=gray] (v8_2);
		\draw (v8_2) edge[color=gray] (v8_3);
		\draw (v8_3) edge[color=gray] (v9);
		
		\draw (v9) edge[color=gray] (v9_1);
		\draw (v9_1) edge[color=gray] (v9_2);
		\draw (v9_2) edge[color=gray] (v9_3);
		\draw (v9_3) edge[color=gray] (v10);
		
		\draw (v1) edge (v10);
		\draw (v2) edge[color=blue] (v8);
		\draw (v3) edge (v4);
		\draw (v4) edge[color=gray] (v4p);   
		\draw (v5) edge[color=gray] (v6);
		\draw (v7) edge (v8); 
		\draw (v7) edge[color=gray] (v7p);   
		\draw (v3) edge[color=red] (v9);
		\draw (v3) edge[color=red] (v3p);
		\draw (v2_1) edge[color=red] (v2_1p); 
		\draw (v8) edge[color=blue] (v8p);
		\draw (v8_3) edge[color=blue] (v8_3p);	 
	\end{tikzpicture}	
\end{center}
\vspace{-3mm}
\caption{The $S$-polynomial of $q_{i,j}$ and $q_{i',j'}$.}
\label{figcycle3}
\end{figure}
By Proposition~\ref{rel4} with $j=t=m+1$ and $s=m+r+1$ in the second identity and $j=t=m+1$, $s=i+m+r+1$ in the first identity we get that
{\footnotesize
\[
S(q_{i,i+m+1},q_{i+r,i+m+r+1})=
\begin{cases}
 -{\rm red}(p_{i+m+1,r+1})\cdot q_{i,i+m+1}+ {\rm red}(p_{i+1,r+1})\cdot q_{i+r,i+m+r+1} +p_{i+m+2,r}\cdot q_{i,i+m}\\
 -p_{i+1,r}\cdot q_{i+r+1,i+m+r+1}, \hspace{66mm} \text{ if } 1\leq r\leq m,\\
 
 -{\rm red}(p_{i+r+1,n-r+1})\cdot q_{i,i+m+1}+{\rm red}(p_{i+r+m+1,n-r+1})\cdot q_{i+r,i+m+r+1}\\ 
 +p_{i+r+1,n-r}\cdot q_{i+1,i+m+1}-p_{i+m+r+2,n-r}\cdot q_{i+r,i+m+r}, \hspace{17mm} \text{ if } m+1\leq r\leq 2m,\\
\end{cases}
\]
}
where ${\rm red}(p)=p-{\rm lt}(p)$ for any polynomial $p$.
 
Since $q_{i,i+m+1}, q_{i+r,i+m+r+1}, q_{i,i+m}, q_{i+r+1,i+m+r+1}, q_{i+1,i+m+1}, q_{i+r,i+m+r}  \in B_1$ for all $1\leq i \leq n$ and $1\leq r\leq n-1$, then
in order to prove that $S(q_{i,i+m+1},q_{i+r,i+m+r+1})\rightarrow_{B_1} 0$, it only remains to prove that the leading terms
of the summands of previous identity are different.
Since ${\rm lt}(p_{i+m+1,r+1}\cdot q_{i,i+m+1})={\rm lt}(p_{i+1,r+1}\cdot q_{i+r,i+m+r+1})$, 
${\rm lt}(p_{i+r+1,n-r+1}\cdot q_{i,i+m+1})={\rm lt}(p_{i+r+m+1,n-r+1}\cdot q_{i+r,i+m+r+1})$
are square free, ${\rm gcd}({\rm lt}(p_{i+m+1,r+1}), {\rm lt}(p_{i+1,r+1}))=1$, 
${\rm gcd}({\rm lt}(p_{i+r+1,n-r+1}), {\rm lt}(p_{i+r+m+1,n-r+1}))=1$,
\[
{\rm red}(p_{i+1,r+1})=x_{i+1}\cdots x_{i+r}/x_{j}x_{j+1} \text{ for some } i+1\leq j\leq i+r-1,
\] 
and
\begin{enumerate}[(i)]
\item ${\rm lt}(p_{i+m+2,r}\cdot q_{i,i+m})=x_{i+m+1}x_{i+m+2}^2\cdots x_{i+m+r}^2x_{i+m+r+1}\cdots x_{i-1}$,
\item ${\rm lt}(p_{i+1,r}\cdot q_{i+r+1,i+m+r+1})=x_{i+m+r+2}\cdots x_{i}x_{i+1}^2\cdots x_{i+r-1}^2x_{i+r}$,
\item ${\rm lt}(p_{i+r+1,n-r}\cdot q_{i+1,i+m+1})=x_{i+m+2}\cdots x_{i+r} x_{i+r+1}^2\cdots x_{i-1}^2x_{i}$,
\item ${\rm lt}(p_{i+m+r+2,n-r}\cdot q_{i+r,i+m+r})=x_{i+m+r+1}x_{i+m+r+2}^2\cdots x_{i+m}^2x_{i+m+1}\cdots x_{i+r-1}$,
\end{enumerate}
then $S(q_{i,i+m+1},q_{i+r,i+r+m+1})\rightarrow_{B_1} 0$ for all $1\leq i\leq  n-1$ and $1\leq r \leq n-i$.
\end{proof}

If $n=3$, then $C_n$ is the complete graph with three vertices.
Now we will present the case of the cycle with five vertices.
\begin{Example}
Let $n=5=2(2)+1=2m+1$ and $B_1$ as in Theorem~\ref{Grobnerodd}.
It is not difficult to compute that
\begin{multicols}{3}
\begin{enumerate}[\it (i)]
\item $q_{1,4}= x_2 x_3-1+x_5$,
\item $q_{2,5}= x_3 x_4-1+x_1$,
\item $q_{3,1}= x_4 x_5-1+x_2$,
\item $q_{4,2}= x_5 x_1-1+x_3$,
\item $q_{5,3}= x_1 x_2-1+x_4$.
\end{enumerate}
\end{multicols}
Moreover, since
\begin{itemize}
\item $S(q_{1,4},q_{2,5})=x_4\cdot q_{1,4}-x_2\cdot q_{2,5}=q_{3,1}-q_{5,3}\rightarrow_{B_1} 0$,
\item $S(q_{1,4},q_{3,1})=x_4x_5\cdot q_{1,4}-x_2x_3\cdot q_{3,1}=(x_5-1)\cdot q_{3,1}-(x_2-1)\cdot q_{1,4}\rightarrow_{B_1} 0 $,
\item $S(q_{1,4},q_{4,2})=x_5x_1\cdot q_{1,4}-x_2x_3\cdot q_{4,2}=(x_5-1)\cdot q_{4,2}-(x_3-1)\cdot q_{1,4}\rightarrow_{B_1} 0$,
\item $S(q_{1,4},q_{5,3})=x_1\cdot q_{1,4}-x_3\cdot q_{5,3}=q_{4,2}-q_{2,5}\rightarrow_{B_1} 0$,
\item $S(q_{2,5},q_{3,1})=x_5\cdot q_{2,5}-x_3\cdot q_{3,1}=q_{4,2}-q_{1,4}\rightarrow_{B_1} 0$,
\item $S(q_{2,5},q_{4,2})=x_5x_1\cdot q_{2,5}-x_3x_4\cdot q_{4,2}=(x_1-1)\cdot q_{4,2}-(x_3-1)\cdot q_{2,5}\rightarrow_{B_1} 0$,
\item $S(q_{2,5},q_{5,3})=x_1x_2\cdot q_{2,5}-x_3x_4\cdot q_{5,3}=(x_1-1)\cdot q_{5,3}-(x_4-1)\cdot q_{2,5}\rightarrow_{B_1} 0$,
\item $S(q_{3,1},q_{4,2})=x_1\cdot q_{3,1}-x_4\cdot q_{4,2}=q_{5,3}-q_{2,5}\rightarrow_{B_1} 0$,
\item $S(q_{3,1},q_{5,3})=x_1x_2\cdot q_{3,1}-x_4x_5\cdot q_{5,3}=(x_2-1)\cdot q_{5,3}-(x_4-1)\cdot q_{3,1}\rightarrow_{B_1} 0$,
\item $S(q_{4,2},q_{5,3})=x_2\cdot q_{4,2}-x_5\cdot q_{5,3}=q_{1,4}-q_{3,1}\rightarrow_{B_1} 0$,
\end{itemize}
then $B_1$ is a Gr\"obner basis for $I_4(C_5)$.
\end{Example}

The even case is slightly different.
\begin{Theorem}\label{grobnereven}
Let $n=2m\geq 6$ and
\[
B_0=\{q_{i,i+m}\, | \, i=0,\ldots, m-1\}\cup \{q_{i+m,i+1}\, | \, i=0,\ldots, m-2\}.
\]
Then $B_0$ is a Gr\"obner basis for $I_{n-1}(C_n,X)$ with respect to the graded lexicographic order with $x_{m-1}>\cdots>x_{2m}>x_1 \cdots>x_{m-2}$.
\end{Theorem}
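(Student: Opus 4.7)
The plan is to parallel the proof of Theorem~\ref{Grobnerodd} in three steps: generation of $I_{n-1}(C_n,X)$ by $B_0$, identification of leading terms, and reduction of $S$-polynomials via Lemma~\ref{GBT}.

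For generation, Theorem~\ref{mingen} reduces the problem to placing $q_{1,2}$, $q_{1,3}$, and $q_{2m,2}$ in $\langle B_0\rangle$. Applying Claim~\ref{rel2} at index $i=1$ with $j=m$ gives $\langle\{q_{1,t}\}_t\rangle=\langle q_{1,m},q_{1,m+1}\rangle$, and both generators lie in $B_0$ ($q_{1,m}$ as the near-diameter $q_{m,1}$ at $i=0$, and $q_{1,m+1}$ as the diameter at $i=1$). In particular $q_{1,2},q_{1,3}\in\langle B_0\rangle$. An analogous application at $i=2$ places $q_{2,2m}=q_{2m,2}$ in $\langle q_{2,m+1},q_{2,m+2}\rangle\subseteq\langle B_0\rangle$, yielding $\langle B_0\rangle=I_{n-1}(C_n,X)$.

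For the leading terms, Claim~\ref{rel1} gives $q_{i,i+m}=p_{i+1,m}+p_{i+m+1,m}$ (both summands of degree $m-1$) and $q_{i+1,i+m}=p_{i+2,m-1}+p_{i+m+1,m+1}$ (of degrees $m-2$ and $m$). Since the order is graded, ${\rm lt}(q_{i+1,i+m})={\rm lt}(p_{i+m+1,m+1})=x_{i+m+1}x_{i+m+2}\cdots x_{i+2m}$ (indices modulo $n$). The cyclic rotation $x_{m-1}>x_m>\cdots>x_{2m}>x_1>\cdots>x_{m-2}$ is tuned so that ${\rm lt}(p_{i+1,m})=x_{i+1}\cdots x_{i+m-1}$ dominates ${\rm lt}(p_{i+m+1,m})$ for every $i=0,\ldots,m-1$, giving ${\rm lt}(q_{i,i+m})=x_{i+1}\cdots x_{i+m-1}$. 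A direct check then shows that the $2m-1$ leading terms are pairwise non-divisible (diameters and near-diameters lie in different degrees, and within each class the variable supports differ).

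For the $S$-polynomial step, the $\binom{2m-1}{2}$ pairs split into three types (diameter--diameter, diameter--near-diameter, near-diameter--near-diameter), and each $S$-polynomial takes the form ${\rm lt}(p_{a,b})\cdot f-{\rm lt}(p_{c,d})\cdot g$. The appropriate specialization of Proposition~\ref{rel4} rewrites this as a $\mathcal{P}[X]$-combination of four elements of $B_0$ together with correction terms of the form ${\rm red}(p)\cdot q_{\cdot,\cdot}$ where ${\rm red}(p):=p-{\rm lt}(p)$; the verification that the leading terms of these summands are pairwise distinct then yields $S(f,g)\to_{B_0}0$, exactly as in the last paragraph of Theorem~\ref{Grobnerodd}. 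The principal obstacle is the near-diameter--near-diameter case: since $B_0$ contains only the $m-1$ near-diameters $q_{i+m,i+1}$ with $i=0,\ldots,m-2$ rather than all $n$ of them, certain $S$-polynomials cannot be reduced to an element of $B_0$ directly and must first be re-expressed using $q_{a,b}=q_{b,a}$ together with Claim~\ref{rel2} so as to land inside $B_0$. The boundary manipulations at the indices $i=0$ and $i=m-2$, where the cyclic symmetry of the odd case is broken, are where the even case differs most substantively.
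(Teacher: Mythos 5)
Your blueprint coincides with the paper's proof: generation via Claim~\ref{rel2}, leading terms via Claim~\ref{rel1} together with the tuned graded lex order, and $S$-polynomial reduction via Proposition~\ref{rel4} with Lemma~\ref{GBT}. The generation step is correct and in fact slightly cleaner than the paper's (invoking Theorem~\ref{mingen} to reduce to the three polynomials $q_{1,2},q_{1,3},q_{2m,2}$, each of which lands in $\langle q_{1,m},q_{1,m+1}\rangle$ or $\langle q_{2,m+1},q_{2,m+2}\rangle\subseteq\langle B_0\rangle$), and the leading-term analysis checks out, including the pairwise non-divisibility of the $2m-1$ leading terms.

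The gap is in the $S$-polynomial step, which is the technical core of the theorem and is only asserted, not carried out. You do not specify which instance of Proposition~\ref{rel4} applies to each of the three pair types, and, more importantly, you do not resolve the boundary cases you flag. You also mislocate them: you name the near-diameter--near-diameter pairs as the principal obstacle, but those reductions in fact close up entirely inside $B_0$ (Proposition~\ref{rel4}(i) rewrites $S(q_{i+m,i+1},q_{i+m+s,i+s+1})$ in terms of $q_{i+m,i}$ and $q_{i+m+s+1,i+s+1}$, which are diameters $q_{i,i+m}$ and $q_{i+s+1,i+s+1+m}$ already in $B_0$). The genuine obstructions occur in the diameter--diameter reductions: for $i=0$ the correction term $q_{0,m-1}=q_{m-1,2m}$ is a near-diameter missing from $B_0$ and must be replaced via $q_{m-1,2m}=x_{2m-1}\,q_{m-1,2m-1}-q_{m-1,2m-2}$ (Claim~\ref{rel2}), and for $i+s=m-1$ the term $q_{m,2m-1}$ must similarly be replaced by $x_{2m}\,q_{0,m}-q_{1,m}$; after these substitutions one must re-verify that the leading terms of all summands remain distinct so that the reduction is strong in the sense of Lemma~\ref{GBT}. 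Without executing these case analyses (and the analogous split $i\leq i'$ versus $i\geq i'+1$ in the mixed type), the claim that every $S$-polynomial reduces to zero over $B_0$ is not established.
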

\begin{proof}
We have that $q_{i,i+m},q_{i,i+m-1}\in B_0$ for all $i=1,\cdots,m-1$ and $q_{i,i-m},q_{i,i-m+1}\in B_0$ for all $i=m,\cdots,2m-2$.
Then by Claim~\ref{rel2}, $\{q_{i,j}\}_{j=1}^n\subsetneq \langle B_0\rangle$ for all $i=1,\cdots,2m-2$.
Moreover, since $q_{2m,1},q_{2m,2}\in \langle B_0\rangle$ and $q_{2m-1,1},q_{2m-1,2}\in \langle B_0\rangle$, 
then by Claim~\ref{rel2}, $\{q_{i,j}\}_{j=1}^n\subsetneq \langle B_0\rangle$ for $i=2m-1,2m$ and therefore $I_{n-1}(C_n, X)=\langle B_0\rangle$.
Following the proof of Theorem~\ref{Grobnerodd}, we have that 
${\rm lt}(q_{i,i+m})={\rm lt}(p_{i+1,m})=x_{i+1}\cdots x_{i+m-1}$ for all $i=0,\ldots, m-1$ and
${\rm lt}(q_{i+m,i+1})={\rm lt}(p_{i+m+1,m+1})=x_{i+m+1}\cdots x_{i}$ for all $i=0,\ldots, m-2$.
Thus ${\rm lt}(q_{i,i+m}) \nmid \,{\rm lt}(q_{i',i'+m})$ for all $0\leq i<i'\leq m-1$, 
${\rm lt}(q_{i+m,i+1}) \nmid \,{\rm lt}(q_{i'+m,i'+1})$ for all $0\leq i<i'\leq m-2$, and
${\rm lt}(q_{i,i+m}) \nmid \,{\rm lt}(q_{i'+m,i'+1})$ for all $0\leq i\leq m-1$ and $0\leq i'\leq m-2$.
Moreover, since ${\rm deg}({\rm red}(q_{i,i+m}))=m-1$, ${\rm deg}({\rm red}(q_{i+m,i+1}))=m-2$ and
${\rm lt}({\rm red}(q_{i,i+m}))={\rm lt}(p_{i+m+1,m})=x_{i+m+1}\cdots x_{i-1}$, 
when $B_0$ is a Gr\"obner basis for $I_{n-1}(C_n,X)$, then it is reduced.

In a similar way to the proof of Theorem~\ref{Grobnerodd}, we need to show that all the 
$S$-polynomials of the elements on $B_0$ can be reduced to $0$ by elements on $B_0$.
At difference to the proof of Theorem~\ref{Grobnerodd}, in this case
we have three types of $S$-polynomials of elements of $B_0$.
First
\begin{eqnarray*}
S(q_{i,i+m},q_{i+s,i+m+s}) &=& (x_{i+m}\cdots x_{i+m+s-1})\cdot q_{i,i+m}-(x_{i+1}\cdots x_{i+s})\cdot q_{i+s,i+m+s}\\
&=& {\rm lt}(p_{i+m,s+1})\cdot q_{i,i+m}-{\rm lt}(p_{i+1,s+1})\cdot q_{i+s,i+m+s}
\end{eqnarray*}
for all $0\leq i\leq m-2$ and $1\leq s\leq m-i-1$.
Applying Proposition~\ref{rel4} $(ii)$ with $j,t=m$ and $s=m+s$ we get that
\[
p_{i+m,s+1}\cdot q_{i,i+m}- p_{i+1,s+1}\cdot q_{i+s,i+m+s}=
p_{i+m+1,s}\cdot q_{i,i+m-1}-p_{i+1,s}\cdot q_{i+s+1,i+m+s}.
\]
Thus
\begin{eqnarray} \nonumber \label{eq01}
S(q_{i,i+m},q_{i+s,i+m+s})&=& -{\rm red}(p_{i+m,s+1})\cdot q_{i,i+m}+ {\rm red}(p_{i+1,s+1})\cdot q_{i+s,i+m+s}\\ 
&& +p_{i+m+1,s}\cdot q_{i,i+m-1}-p_{i+1,s}\cdot q_{i+s+1,i+m+s}.
\end{eqnarray}
Since $q_{i,j}=q_{j,i}$, then it is not difficult to use Equation~\ref{eq01} 
to prove that $S(q_{i,i+m},q_{i+s,i+m+s})$ is reduced to $0$ by $B_0$ whenever $i\geq 1$ and $i+s\leq m-2$.
Now, it only remains to analyze two special cases, when $i=0$ and $i+s=m-1$.
Taking $i=0$ in Equation~\ref{eq01} and the fact that
\[
q_{0,m-1}=q_{m-1,2m}\overset{\ref{rel2}}{=}p_{2m-1,2}\cdot q_{m-1,2m-1}-p_{2m,1}q_{m-1,2m-2}=x_{2m-1}\cdot q_{m-1,2m-1}-q_{m-1,2m-2}
\]
we have that
\begin{eqnarray*} 
S(q_{0,m},q_{s,m+s})
&=& -{\rm red}(p_{m,s+1})\cdot q_{0,m}+ {\rm red}(p_{1,s+1})\cdot q_{s,m+s} +x_{2m-1}\cdot p_{m+1,s}\cdot q_{m-1,2m-1}\\
&-&
\begin{cases}
p_{m+1,s}\cdot q_{m-1,2m-2}+ p_{1,s}\cdot q_{s+1,m+s} &\text{if } s\leq m-3,\\
(p_{m+1,m-2}+p_{1,m-2})\cdot q_{m-1,2m-2} &\text{if } s= m-2.
\end{cases}
\end{eqnarray*}

\noindent Therefore, $S(q_{0,m},q_{s,m+s}) \rightarrow_{B_0} 0$ for all $1\leq s\leq m-2$.
On the other hand, taking $i+s=m-1$ in Equation~\ref{eq01} and using that
$q_{m,2m-1}=q_{2m-1,m}\overset{\ref{rel2}}{=}x_{2m} \cdot q_{0,m}-q_{1,m}$, we have that
\begin{eqnarray*} 
S(q_{i,i+m},q_{m-1,2m-1})
&=& -{\rm red}(p_{i+m,s+1})\cdot q_{i,i+m}+ {\rm red}(p_{i+1,s+1})\cdot q_{m-1,2m-1}\\
&& +p_{i+m+1,s}\cdot q_{i,i+m-1}-p_{i+1,s}\cdot q_{m,2m-1}\\
&=& -{\rm red}(p_{i+m,s+1})\cdot q_{i,i+m}+ {\rm red}(p_{i+1,s+1})\cdot q_{m-1,2m-1}\\
&+&
\begin{cases} 
(p_{m+2,s}+p_{2,s})\cdot q_{1,m}- x_{2m}\cdot p_{2,s} \cdot q_{0,m} &\text{if } i= 1,\\
p_{i+m+1,s}\cdot q_{i,i+m-1}- x_{2m}\cdot p_{i+1,s} \cdot q_{0,m}+p_{i+1,s}\cdot q_{1,m} &\text{if } i\geq 2.
\end{cases}
\end{eqnarray*}
\noindent Therefore, $S(q_{i,i+m},q_{m-1,2m-1})$ can be reduced to $0$ by $B_0$.
Finally, if $i=0$ and $s=m-1$, then
\[
S(q_{0,m},q_{m-1,2m-1})= {\rm red}(p_{0,m})\cdot q_{m-1,2m-1}-{\rm red}(p_{m-1,2m-1})\cdot q_{0,m}.
\]

Now, we continue with the $S$-polynomials of a second type.
\begin{eqnarray*}
S(q_{i+m,i+1},q_{i+m+s,i+s+1}) &=& (x_{i+1}\cdots x_{i+s})\cdot q_{i+m,i+1}-(x_{i+m+1}\cdots x_{i+m+s})\cdot q_{i+m+s,i+s+1}\\
&=& {\rm lt}(p_{i+1,s+1})\cdot q_{i+m,i+1}-{\rm lt}(p_{i+m+1,s+1})\cdot q_{i+m+s,i+s+1}
\end{eqnarray*}
for all $0\leq i\leq m-3$ and $1\leq s\leq m-i-2$.
By Proposition~\ref{rel4} $(i)$ with $i=i+s+1$, $s=m-s-1$, and $t,j=m-1$ we get that
\[
p_{i+1,s+1}\cdot q_{i+m,i+1}-p_{i+m+1,s+1}\cdot q_{i+m+s,i+s+1}=
p_{i+2,s}\cdot q_{i+m,i}-p_{i+m+1,s}\cdot q_{i+m+s+1,i+s+1}
\]
Thus
\begin{eqnarray*} 
S(q_{i+m,i+1},q_{i+m+s,i+s+1})&=& -{\rm red}(p_{i+1,s+1})\cdot q_{i+m,i+1}+ {\rm red}(p_{i+m+1,s+1})\cdot q_{i+m+s,i+s+1}\\
&& +p_{i+2,s}\cdot q_{i+m,i}-p_{i+m+1,s}\cdot q_{i+m+s+1,i+s+1}
\end{eqnarray*}
\noindent for all $0\leq i\leq m-3$ and $1\leq s\leq m-i-2$.
Therefore, $S(q_{i+m,i+1},q_{i+m+s,i+s+1})\rightarrow_{B_0} 0$.
We finish with the $S$-polynomials of a third type.
\begin{eqnarray*}
S(q_{i,i+m},q_{i'+m,i'+1})&=&
\begin{cases}
(x_{i'+m+1}\cdots x_{i})\cdot q_{i,i+m}-(x_{i'+1}\cdots x_{i+m-1})\cdot q_{i'+m,i'+1} & \text{if } i\leq i'-1,\\
(x_{i'+m+1}\cdots x_{i'})\cdot q_{i,i+m}-(x_{i+1}\cdots x_{i+m-1})\cdot q_{i'+m,i'+1} & \text{if } i=i',\\
(x_{i'+m+1}\cdots x_{i'})\cdot q_{i,i+m}-(x_{i+1}\cdots x_{i+m-1})\cdot q_{i'+m,i'+1} & \text{if } i=i'+1,\\
(x_{i+m}\cdots x_{i'})\cdot q_{i,i+m}-(x_{i+1} \cdots x_{i'+m})\cdot q_{i'+m,i'+1}  & \text{if } i\geq i'+2,
\end{cases}
\\
&=&
\begin{cases}
{\rm lt}(p_{i'+m+1,m+i-i'+1})\cdot q_{i,i+m}-{\rm lt}(p_{i'+1,m+i-i'})\cdot q_{i'+m,i'+1} & \text{if } i\leq i',\\
{\rm lt}(p_{i+m,m+i'-i+2})\cdot q_{i,i+m}-{\rm lt}(p_{i+1,m+i'-i+1})\cdot q_{i'+m,i'+1}  & \text{if } i\geq i'+1,
\end{cases}
\end{eqnarray*}
for all $0\leq i\leq m-1$ and $0\leq i'\leq m-2$.
As in the previous cases, by Proposition~\ref{rel4} we get that
{\small
\begin{eqnarray*}
p_{i'+m+1,m+i-i'+1}\cdot q_{i,i+m}-p_{i'+1,m+i-i'}\cdot q_{i'+m,i'+1}=p_{i'+m+1,m+i-i'}\cdot q_{i+1,i+m}-p_{i'+2,m+i-i'-1}\cdot q_{i'+m,i'},\\
p_{i+m,m+i'-i+2}\cdot q_{i,i+m}- p_{i+1,m+i'-i+1}\cdot q_{i'+m,i'+1}=p_{i+m+1,m+i'-i+1}\cdot q_{i,i+m-1}-p_{i+1,m+i'-i}\cdot q_{i'+m+1,i'+1}.
\end{eqnarray*}
}
Finally, in a similar way as in the previous types of $S$-polynomials, 
it is not difficult to see that using these identities that $S(q_{i,i+m},q_{i'+m,i'+1})$ can be reduced  to zero by $B_0$ for all $0\leq i \leq m-1$ and $0\leq i' \leq m-2$. 
For instance, if $i=i',i'+1$, then
\[
S(q_{i,i+m},q_{i'+m,i'+1})= {\rm red}(p_{i,i+m+1})\cdot q_{i'+m,i'+1}-{\rm red}(p_{i'+m,i'+2})\cdot q_{i,i+m}.\vspace{-5mm}
\]
\end{proof}

\begin{Remark}
Note that Theorems~\ref{Grobnerodd} and~\ref{grobnereven} are independent of the base ring.
\end{Remark}
We present the case of the cycle with four vertices.
\begin{Example}
Let $n=4=2m$ and $B_0$ as in Theorem~\ref{grobnereven}.
It is not difficult to compute that $q_{1,3}= x_2 +x_4$, $q_{2,4}= x_1+x_3$, $q_{1,2}= x_3 x_4$, and
\begin{eqnarray*}
S(q_{1,3},q_{2,4})= x_4\cdot q_{2,4} -x_3\cdot q_{1,3} &\text{ with }& {\rm lt}(x_4\cdot q_{2,4})=x_1x_4\neq x_2x_3={\rm lt}(x_3\cdot q_{1,3}),\\
S(q_{1,3},q_{1,2})= x_4\cdot q_{1,2}, &\text{ and }& S(q_{2,4},q_{1,2})= x_3\cdot q_{1,2}.
\end{eqnarray*}

Therefore $B_0$ is a Gr\"obner basis for $I_3(C_4)$.
\end{Example}

Now, we present the cycle with six vertices.

\begin{Example}
Let $n=2m=2(3)=6$, then the polynomials $q_{6,3}=\underline{x_1x_2}+x_4x_5-2$, $q_{1,4}=\underline{x_2x_3 }+x_5x_6-2$, $q_{2,5}=\underline{x_3x_4}+x_1x_6-2$,
$q_{3,1}=\underline{x_4x_5x_6}-x_4-x_6+x_2$, $q_{4,2}=\underline{x_1x_5x_6}-x_1-x_5+x_3$
(the leading terms are underlined)
form a Gr\"obner basis for $I_{5}(C_6,X)$ with respect to the graded lexicographic order with $x_2>x_3>x_4>x_5>x_6>x_1$.
For instance, taking $i=0$ and $s=m-1=2$ it is not difficult to see that
\[
S(q_{6,3},q_{2,5})=x_3x_4\cdot q_{6,3}-x_1x_2\cdot q_{2,5}=(x_4x_5-2)\cdot q_{2,5}-(x_5x_6-2)\cdot q_{3,6}.
\]
Also, taking $i=1$ and $s=1$ we get that
\[
S(q_{1,4},q_{2,5})=x_4\cdot q_{1,4}-x_2\cdot q_{2,5}=x_4x_5x_6-2x_4-x_2x_1x_6+2x_2=q_{3,1}-q_{3,5}=2q_{3,1}-x_6\cdot q_{6,3}.
\]
\end{Example}

In order to finish this article we present a simple application of Theorem~\ref{mingen}, 
see Remark 6.8 in \cite{lorenzini08} for a similar result for graphs.
\begin{Corollary}
Let $D$ be a digraph and $v\in V(D)$ such that $D\setminus v\simeq C_n$, 
then $K(D)$ has at most two invariant factors different to one. 
\end{Corollary}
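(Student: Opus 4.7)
The plan is to combine Theorem~\ref{mingen} with the bridge between critical ideals and invariant factors given by Proposition~\ref{correspondence}. Concretely, write $|V(D)|=n+1$, so that $K(D)\cong\bigoplus_{i=1}^{n}\mathbb{Z}_{f_i}$ with $f_1\mid f_2\mid\cdots\mid f_n$. The goal is to show $f_{n-2}=1$, which is equivalent to saying that at least $n-2$ invariant factors are trivial.

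First I would apply Theorem~\ref{mingen}, which yields $I_i(C_n,X)=\langle 1\rangle$ for every $1\leq i\leq n-2$. Since $D\setminus v\simeq C_n$, the same equality holds for the critical ideals of $D\setminus v$. Next, I would invoke Proposition~\ref{correspondence} (or, if $D$ is not Eulerian, the extension to matrices $M$ with $M\mathbf{1}=\mathbf{0}=\mathbf{1}M$ recorded in the Remark after it) applied to $D$ with the removed vertex $v$; this gives
\[
I_i\bigl(D\setminus v,\,d_D(D\setminus v)\bigr)=\Bigl\langle\prod_{j=1}^{i}f_j\Bigr\rangle\quad\text{for all }1\leq i\leq n.
\]
Since the ideal on the left is obtained by evaluating $X$ at an integer vector, the triviality of $I_i(C_n,X)$ for $i\leq n-2$ passes to the evaluation: $I_i(D\setminus v,d_D(D\setminus v))=\langle 1\rangle$ for $i\leq n-2$.

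Comparing the two expressions, $\prod_{j=1}^{i}f_j=1$ for all $1\leq i\leq n-2$, which forces $f_1=f_2=\cdots=f_{n-2}=1$. Hence only $f_{n-1}$ and $f_n$ can possibly differ from $1$, so $K(D)$ has at most two invariant factors different from one. There is essentially no obstacle here: the whole argument is a direct specialization of the two structural results already proved, and the only care needed is to verify the hypothesis under which Proposition~\ref{correspondence} (or its matrix version) applies to the particular $D$ at hand.
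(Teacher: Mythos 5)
Your overall strategy---trivial critical ideals of $C_n$ up to level $n-2$ force $f_1=\cdots=f_{n-2}=1$---is the intended one, but the bridge you chose has a genuine gap. Proposition~\ref{correspondence} is stated only for \emph{connected Eulerian} digraphs, and the matrix extension in the Remark still requires $\mathbf{1}M=\mathbf{0}$, i.e.\ zero column sums of $L(D)$, which for a digraph is again exactly the Eulerian condition $d^+(u)=d^-(u)$ for all $u$. The corollary, however, is asserted for an arbitrary digraph $D$ with $D\setminus v\simeq C_n$, and such a $D$ need not be Eulerian (nothing constrains the arcs incident to $v$, nor even the orientations inside the copy of $C_n$ viewed as a subdigraph of $D$). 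So the identity $I_i(D\setminus v, d_D(D\setminus v))=\langle\prod_{j=1}^i f_j\rangle$ that your argument rests on is simply not available in general; you flag that "the only care needed is to verify the hypothesis," but that hypothesis genuinely fails here.

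The repair is easy and avoids the reduced Laplacian altogether. Since $D\setminus v\simeq C_n$ is an \emph{induced} subdigraph of $D$, Proposition~\ref{BasicProp} gives $I_i(C_n,X)\subseteq I_i(D,X)$, so Theorem~\ref{mingen} yields $I_i(D,X)=\langle 1\rangle$ for all $i\leq n-2$. Evaluating at $X=d_D(D)$ preserves the element $1$, so $I_i(D,d_D(D))=\mathbb{Z}$ for $i\leq n-2$, and Proposition~\ref{correspondence0} --- which holds for \emph{any} digraph, with no Eulerian hypothesis --- gives $I_i(D,d_D(D))=\langle\prod_{j=1}^i f_j\rangle$. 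Hence $f_1=\cdots=f_{n-2}=1$ and at most $f_{n-1},f_n$ can be nontrivial. (Alternatively, you could keep your route through $D\setminus v$ by noting that only the containment $I_i(D\setminus v,d_D(D\setminus v))\subseteq I_i(D,d_D(D))$ is needed, and that containment is the trivial direction: minors of the reduced Laplacian are among the minors of $L(D)$. The full equality of Proposition~\ref{correspondence} is both unavailable and unnecessary.)
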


It is not easy to determine when the critical group of $G$ is cyclic because we need to evaluate 
a set of polynomials and after to compute its greatest common divisor.

\noindent {\bf Acknowledgments}

The authors would like to thank the anonymous referees for their helpful comments.



\begin{thebibliography}{100}
\bibitem{Lou}{W. W. Adams and P. Loustaunau, An introduction to Gr\"obner basis, Grad. Stud. Math., vol. 3, Amer. Math. Soc, Providence, RI, 1994.}

\bibitem{alfaro12}{C. A. Alfaro and C. E. Valencia, Graphs with two trivial critical ideals,
submitted to Discrete Applied Mathematics. {\tt ArXiv:1304.4211[math.CO].} }

\bibitem{digraphs}{J. Bang-Jensen and G. Gutin, Digraphs: Theory, Algorithms and Applications, Springer-Verlag, London, 2006.}

\bibitem{biggs93}{N. L. Biggs, Algebraic Graph Theory, second ed., Cambridge Mathematical Library, Cambridge University Press, Cambridge, 1993.}

\bibitem{ideal}{R. Cori, D. Rossin, and B. Salvy, Polynomial ideals for sandpiles and their G\"obner bases, Theoretical Computer Science, 276 (2002) 1--15.}

\bibitem{trees}{H. Corrales and C. Valencia, On the critical ideals of trees, in preparation.}

\bibitem{cox}{D. Cox, J. Little, and D. O'Shea, Ideals, Varieties, and Algorithms: An Introduction to Computational Algebraic Geometry and Commutative Algebra, 
Undergraduate Texts in Mathematics, vol. 22, Springer New York, 2012, corr. third printing.}

\bibitem{diestel}{R. Diestel, Graph Theory, Graduate Texts in Mathematics 173, Springer-Verlag, New York, 2005.}

\bibitem{simplicial}{A. M. Duval, C. J. Klivans, and J. L. Martin, Critical groups of simplicial complex,  Annals of Combinatorics 17 1 (2013), 53--70.}

\bibitem{godsil}{C. D. Godsil and G. F. Royle, Algebraic Graph Theory, Graduate Texts in Mathematics 207, Springer-Verlag, New York, 2001.}

\bibitem{lorenzini89}{D. J. Lorenzini, Arithmetical graphs, Math. Ann. 285 (1989) 481--501.}

\bibitem{lorenzini91}{D. J. Lorenzini, A finite group attached to the Laplacian of a graph, Discrete Math. 91 (1991) 277--282.}

\bibitem{lorenzini08}{D. J. Lorenzini, Smith normal form and Laplacians, J. Combin. Theory B 98 (2008), 1271--1300.}

\bibitem{directed}{D. G. Wagner, The critical group of a directed graph, preprint, 2000. {\tt ArXiv:math/0010241}.}

\end{thebibliography}
\end{document}